\newtheorem{theo}{Theorem}[section]
\newtheorem{lemma}{Lemma}[section]
\newtheorem{cor}{Corollary}[section]
\newtheorem{defin}{Definition}[section]
\newtheorem{rem}{Remark}[section]
\renewcommand{\u}{\mathbf{u}}
\renewcommand{\P}[1]{\mathbb{P}_k(#1)}
\newcommand{\K}{\mathbf{K}_h}
\newcommand{\M}{\hat{\mathbf{M}}_h}
\newcommand{\scalmassproj}{\Pi^{0}_{k,E}}
\newcommand{\scalenergyproj}{\Pi^{\nabla}_{k,E}}
\newcommand{\localvespace}{V^E_{h,k}}
\newcommand{\integral}[1]{\int \limits_{#1}}
\newcommand{\numdofs}{N^E_{\text{dof}}}
\newcommand{\lebesguespace}[1]{L^2(#1)}
\title{Mass-Lumped Virtual Element Method with Strong Stability-Preserving Runge-Kutta Time Stepping for Two-Dimensional Parabolic Problems}
\author{
 Paulo Akira F. Enabe \\
    Escola Politénica\\
    University of São Paulo\\
    Department of Structural and Geotechnical Engineering\\
  \texttt{paulo.enabe@usp.br} \\
   \And
 Rodrigo Provasi \\
    Escola Politénica\\
    University of São Paulo\\
    Department of Structural and Geotechnical Engineering\\
  \texttt{provasi@usp.br} \\
}
\begin{document}
\maketitle
\begin{abstract}
    This paper presents a mass-lumped Virtual Element Method (VEM) with explicit Strong Stability-Preserving Runge--Kutta (SSP-RK) time integration for two-dimensional parabolic problems on general polygonal meshes. A diagonal mass matrix is constructed via row-sum operations combined with flooring to ensure uniform positivity. Stabilization terms vanish identically under row summation, so the lumped weights derive solely from the $L^2$ projector and are computable through a small polynomial system at cost $\mathcal{O}(N_k^3)$ per element. The resulting lumped bilinear form satisfies $L^2$-equivalence with constants independent of the number of element edges, yielding a symmetric positive definite discrete inner product. A mesh-robust spectral estimate is established, showing that the largest eigenvalue of the discrete diffusion operator scales like $h^{-2}$, with constants depending only on the space dimension, polynomial degree, and mesh regularity. This yields the classical diffusion-type CFL condition $\Delta t=\mathcal{O}(h^2)$ for forward Euler stability and extends to higher-order SSP-RK schemes, ensuring the preservation of stability properties inherited from the forward Euler step. Numerical experiments on distorted quadrilateral, serendipity, and Voronoi meshes validate the theoretical predictions: for $k=1$, the lumped VEM attains optimal convergence rates, namely $\mathcal{O}(h)$ in the $H^1$-seminorm and $\mathcal{O}(h^2)$ in the $L^2$-norm, without degradation due to mesh distortion or diagonal mass approximation, while the SSP-RK methods remain stable under the predicted $\Delta t\propto h^2$ scaling. Additional tests on accuracy versus efficiency and on heterogeneous anisotropic diffusion further illustrate the practical competitiveness and robustness of the proposed formulation.
\end{abstract}


\section{Introduction}
\label{sec:introduction}

The Virtual Element Method (VEM) extends the finite element methodology to general polygonal/polyhedral meshes by defining local spaces that contain $\mathbb{P}_k(E)$ together with “virtual” functions, and by constructing computable bilinear forms from degrees of freedom (DOFs) and polynomial projections rather than explicit shape functions. On the scalar Laplace model \cite{beirao2013vem}, the extended space $\overline{V}^E_{h,k}$ (with polynomial traces and $\Delta v \in \mathbb{P}_{k-2}(E)$) and the virtual space $V^E_{h,k}$ are specified, the DOFs are prescribed, and the discrete stiffness is built as a consistency term based on the energy projector $\scalenergyproj$ plus a stabilization acting on $\ker(\scalenergyproj)$, yielding mesh--independent stability and optimal convergence. The “Hitchhiker’s Guide” \cite{veiga2014hitchhiker} details how to compute $\scalenergyproj$ from DOFs and assemble the consistency--stability split, and—crucially for transient or reaction terms—how to compute the full $L^2$ projector $\scalmassproj$ using only DOFs, enabling accurate mass matrices and load terms on arbitrary polygons/polyhedra.

VEM has seen broad use across mechanics and multiphysics. In solid mechanics, applications include linear and nonlinear elasticity, finite deformations, and elasto-plasticity \cite{beirao2015elastic},\cite{artioli2017arbitrary},\cite{hudobivnik2019plasticity}, \cite{wriggers2017efficient}, \cite{vanhuyssteen2020isotropic}. In fluid and transport problems, studies address topology optimization of non-Newtonian flows, two-phase flow in porous media, coupled Poisson-Nernst-Planck/Navier-Stokes systems, and hemivariational formulations for stationary Navier-Stokes \cite{suarez2022topology},\cite{berrone2022twophase},\cite{dehghan2023poisson},\cite{han2021hemivariational}. Recent developments include stabilization-free and serendipity variants that remove or redesign the stabilization term while retaining accuracy \cite{chen2023stabilization},\cite{chen2023serendipity}.

The work in \cite{vacca2015parabolic} is the foundational reference that brings conforming VEM to time-dependent diffusion on polygonal meshes. It sets up the augmented/enhanced local spaces and shows how to compute, purely from DOFs, the energy and $L^2$ projectors that make the stiffness and mass forms both $k$-consistent and stably assembled on arbitrary polygons. The paper develops the semidiscrete parabolic formulation, proves optimal convergence under standard star-shaped regularity, derives fully discrete error bounds for implicit stepping, and documents behavior through comprehensive tests (including a reduced-mass variant and conservation for Neumann problems). For our purposes, this work provides the canonical projector-based mass and stiffness construction and the error/stability framework we build upon, serving as the baseline we extend with mass lumping and SSP Runge--Kutta explicit time stepping together with mesh-robust CFL bounds. The extension to the nonconforming setting is addressed in \cite{zhao2019nonconforming}, where semi-discrete and fully discrete formulations are analyzed and optimal error estimates are established in the $L^2$ norm and broken $H^1$ seminorm. The authors in \cite{adak2020nonlocal} further extends the parabolic VEM framework to nonlinear nonlocal diffusion problems on general polygonal and polyhedral meshes, establishing well-posed semi-discrete and fully discrete schemes, deriving \textit{a priori} error estimates in the $L^2$ and $H^1$ norms, and introducing linearized and reformulated nonlinear solvers to reduce the computational cost associated with the nonlocal term.

The article \cite{gomez2022serendipity} considers semilinear parabolic problems on polygonal meshes from the perspective of the high-order interpolatory Serendipity Virtual Element Method. There, the emphasis is on reducing the number of degrees of freedom and on approximating the nonlinear term through computable interpolatory or quasi-interpolatory constructions in the serendipity space, leading to improved efficiency while retaining optimal semi-discrete error estimates in the $L^2$ norm. When combined with a second-order Strang operator splitting time discretization, the resulting fully discrete scheme is shown to be particularly effective and naturally suited to parallel implementations.

The article \cite{arrutselvi2022nonlocal} further broadens the time-dependent VEM literature by considering a coupled nonlocal parabolic system on polygonal meshes. In that setting, the virtual element framework is used to discretize both the diffusion operator and the nonlocal terms through computable projection-based forms, and the analysis covers semi-discrete, fully discrete, and linearized formulations. The paper derives error estimates in the $L^2$ and $H^1$ norms and introduces an equivalent reformulation together with a linearized scheme in order to retain a simpler Jacobian structure and reduce the computational complexity associated with the coupled nonlocal coefficients. 

More recently, \cite{wang2025bdf2} studies strongly nonlinear parabolic problems on general polygonal meshes within the VEM framework, using a linearized second-order backward differentiation formula for the time discretization. The analysis combines virtual element projections with a temporal--spatial error splitting technique to establish optimal convergence and unconditional $L^\infty$ boundedness of the numerical solution, thereby obtaining error estimates without any time-step restriction. In a related but distinct direction, \cite{dar2024timefractional} considers time-fractional parabolic equations over distorted polygonal meshes, further illustrating the flexibility of VEM for transient diffusion models beyond the classical integer-order setting. Taken together, these contributions confirm that recent advances in time-dependent VEM have mainly developed along the direction of implicit, linearized, or fractional formulations for increasingly complex models, rather than explicit mass-lumped schemes.

The $L^2$ projectors were originally introduced in \cite{ahmad2013projectors}. The authors introduced the “equivalent projectors” framework for VEM, showing that with a mild modification of the local space the full $L^2$ projector onto $\mathbb{P}_k$ becomes computable solely from the standard degrees of freedom together with the energy projector. The construction enforces matching of higher-order moments between a virtual function and its energy projection, thereby revealing the required polynomial information without explicit shape functions. As a result, mass terms and load integrals can be assembled in a DOF-only, polynomially consistent manner on arbitrary polygons/polyhedra—an essential ingredient for parabolic problems—yielding accurate, stable element mass matrices and straightforward treatment of reaction and source terms, with a formulation that extends naturally to three dimensions and higher orders.

Explicit time stepping is attractive in the VEM setting because it avoids global linear solves at every step—often the dominant cost and the main algorithmic bottleneck on general polygonal meshes. Updates are purely local once the spatial operator is applied, which makes them easy to parallelize and well suited to accelerators (GPUs). They are also simple to implement for variable coefficients and nonlinearities (no Jacobians or changing preconditioners), and they interact cleanly with limiters and strong-stability notions (via SSP-RK) so that discrete maximum principles, positivity, or energy decay can be enforced under a verifiable stepsize restriction. For many diffusion-type problems—especially short-to-moderate time horizons, repeated solves, or as a building block in operator splitting—these properties yield lower time-to-solution and a more predictable performance profile than implicit solves on challenging polytopal meshes. See for example \cite{alexiades1996super} and \cite{onate2024explicit}.

A related development, particularly relevant from the viewpoint of qualitative stability, is the positivity-preserving conservative VEM-based scheme proposed in \cite{yang2022vem} for radiation diffusion problems on generalized polyhedral meshes. In that work, the emphasis is not on explicit time integration, but on the construction of a discretization that preserves positivity and conservation properties on highly general three-dimensional meshes. This is closely connected to the present perspective, since it highlights that, in polygonal and polyhedral methods, the preservation of physically meaningful bounds is a central design issue rather than a secondary numerical detail. A related contribution in the broader context of qualitative stability on polygonal meshes is \cite{cao2025edge}, where edge-averaged virtual element schemes are developed for convection-diffusion and convection-dominated problems, with particular emphasis on monotonicity, M-matrix structure, and the suppression of nonphysical oscillations.

Mass lumping is the key to making those explicit schemes practical and robust. A consistent VEM mass matrix is sparse but not diagonal; inverting it at each step would reintroduce global solves. Replacing it with a carefully designed diagonal (lumped) matrix preserves polynomial consistency where needed, ensures positivity of the discrete norm, and dramatically reduces cost—each time step becomes a sequence of scaled vector operations. On general polygons, naïve row-sums may fail to be uniformly positive for higher orders; enforcing a small floor on the lumped weights remedies this and yields an $L^2$-equivalent discrete inner product with mesh-independent constants. The result is a symmetric positive definite, diagonally dominant mass operator that enables clean SSP arguments, delivers mesh-robust CFL bounds, and remains stable even on highly irregular polytopal meshes—exactly the regime where VEM is most compelling.

Regarding mass lump formulation and the Virtual Element Method, the work in \cite{park2020elastodynamic} develops an explicit-time VEM for 2D/3D elastodynamics. It builds the element stiffness as a consistency part (from an $L^2$ projector acting on gradients) plus a stabilization part (either diagonal or scalar), and studies how the stabilization choice affects accuracy on convex and nonconvex meshes. It also analyzes stability limits for explicit updates via global and local eigenvalue estimates and via an “effective element length,” and it compares two lumped-mass constructions—diagonal scaling and a row-sum technique—showing, among other points, that the row-sum approach does not guarantee positive nodal masses while diagonal scaling can yield larger critical time steps on nonconvex meshes without degrading accuracy. The study includes a B-bar formulation for nearly incompressible solids and documents convergence on a variety of polygonal/polyhedral meshes. 

This authors in \cite{park2020elastodynamic} do provide a VEM lumped-mass formulation, but it is aimed at hyperbolic elastodynamic problems and central-difference-type explicit updates, not parabolic diffusion equations. By contrast, the present work targets parabolic operators and uses SSP Runge-Kutta time stepping with a mass-lumped VEM to obtain positivity/monotonicity and diffusion-type CFL bounds. The elastodynamic recipe’s stability analysis is wave-speed/eigenfrequency based and its lumping options are diagonal scaling vs. row-sum; our formulation instead focuses on diffusion spectra and SSP-compatible lumping tailored to parabolic dynamics.  

The combination of a projector-based VEM discretization and a diagonal (lumped) mass matrix sets the stage for provably stable explicit time integration. Once the $L^2$ inner product is replaced by its lumped counterpart, forward Euler becomes contractive in the discrete energy $|\cdot|_{\M}$ under a diffusion-type CFL bound tied to the spectral radius of $(\M)^{-1}\K$. Strong-stability-preserving Runge--Kutta (SSP-RK) schemes in the Shu--Osher framework then lift this first-order monotonicity to higher order: any stability (e.g., energy decay, positivity) guaranteed by forward Euler is inherited by the multi-stage integrator with a computable step-size scaling. Section~\ref{sec:estimates} formalizes this mechanism for the VEM semi-discretization, specifies practical SSP-RK choices, and couples them with mesh-robust spectral estimates for $(\M)^{-1}\K$ so that the resulting CFL restriction remains of order $h^2$ and independent of the number of element edges; see also \cite{shu1988eno}, \cite{gottlieb1998ssp}, \cite{califano2018ssp}, \cite{izzo2022ssp}.

Section~\ref{sec:vem} recalls the VEM ingredients used here: augmented/enhanced spaces, degrees of freedom, and the computable energy and $L^2$ projectors, together with the consistency--stability split for stiffness and mass forms. Section~\ref{sec:lumped_mass} introduces the lumped-mass variant, shows that stabilization terms vanish under row summation, proves positivity and $L^2$-equivalence for the resulting diagonal inner product with constants independent of the number of element edges, and provides a degree-of-freedom-based algorithm for computing the lumped weights on each polygon. Section~\ref{sec:estimates} develops the explicit SSP-RK time discretization for the lumped VEM system, deriving forward Euler and SSP stability estimates together with a mesh-robust spectral bound leading to the classical diffusion CFL condition. Section~\ref{sec:numerical} presents numerical experiments assessing convergence, stability, accuracy versus efficiency, and robustness in heterogeneous anisotropic diffusion on both structured and highly irregular polygonal meshes. Finally, Section~\ref{sec:conclusion} summarizes the main findings and discusses possible extensions, including variable-coefficient problems, source terms, and coupled multiphysics models.

The main contributions of this paper are:
\begin{itemize}
    \item \textbf{Explicit VEM framework:} Energy-stable explicit scheme for parabolic partial differential equation on polygonal meshes, with stability measured in the discrete energy norm induced by $\M$ and a stepsize restriction linked to the spectral quantity $\lambda_{\max}\!\big((\M)^{-1}\K\big)$.
    \item \textbf{Mass-lumping formulation:} Row-sum, diagonally scaled mass matrix that is $k$-consistent; stabilization vanishes in row-sums, and the use of floored weights (cf. \eqref{eq:floored_weights}) yields uniform positivity and $L^2$-equivalence, making the lumped bilinear form symmetric positive definite.
    \item \textbf{Spectral bound and CFL estimate:} Proof of a mesh-robust upper bound
    \begin{equation}
        \nonumber
        \lambda_{\max}\!\big((\M)^{-1}\K\big)\ \le\ \tfrac{C_{\mathrm{inv}}^2}{\hat{\beta}_*}\,h_{\min}^{-2},
    \end{equation}
    with constants depending only on $d$, $k$, and the chunkiness parameter, and independent of the number of faces/edges; this delivers a CFL rule $\Delta t=\mathcal{O}(h^2)$ for explicit diffusion with lumped $\M$.
    \item \textbf{SSP time integration for VEM:} Adoption of SSP-RK methods from \cite{izzo2022ssp} (third-order SSP-RK3 and fourth-order SSP-RK(5,4)), using the GLM/monotonicity framework to guarantee preservation of $\Phi(\u)=\|\u\|_{\M}$ under
    \begin{equation}
        \nonumber
        0<\Delta t \ \le\ C_{\mathrm{SSP}}\,\Delta t_{\mathrm{FE}},
    \end{equation}
    avoiding Shu-Osher implementation issues.
    \item \textbf{Computable and efficient construction:} Element-wise computation of lumped weights via a small polynomial system
    \begin{equation}
        \nonumber
        \mathbf{G}^E\,\mathbf{w}^E=\mathbf{c}^E,
    \end{equation}
    and DOF matrix contractions, with cost $\mathcal{O}(N_k^3)$ per element and $N_k \ll N_{\text{dof}}^E$, enabling straightforward incorporation in existing VEM codes.
    \item \textbf{Scope and generality:} Analysis presented for the scalar diffusion problem; by componentwise action of the diffusion operator and VEM projectors, the vector-valued case decouples into identical scalar blocks, so the spectral and stability results carry over componentwise.
\end{itemize}

\section{Virtual Element Method}
\label{sec:vem}

The problem considered in this paper is the time-dependent scalar diffusion equation on a Lipschitz domain $\Omega \in \mathbb{R}^2$:
\begin{equation}\label{eq:strong_formulation}
  \left\{ \begin{aligned}
    \frac{\partial u}{\partial t} - \Delta u &= f && \text{in } \Omega \times (0,T),\\
    u &= 0 && \text{on } \partial \Omega \times (0,T),\\
    u(\cdot, 0) &= u_0 && \text{in } \Omega,
  \end{aligned} \right. ,
\end{equation}
where $f$ is the source term, $u_0 \in L^2(\Omega)$ is the initial condition, and $u$ is the solution field. Here $\Omega\subset\mathbb{R}^2$ is a bounded Lipschitz polygonal domain with boundary $\partial\Omega$, and $T>0$. Assume $f\in L^2\big(0,T; L^2(\Omega)\big)$ and $u_0\in L^2(\Omega)$. 

The weak formulation seeks $u \in L^2(0,T;H^1_0(\Omega))$ with $\frac{\partial u}{\partial t} \in L^2(0,T;H^{-1}(\Omega))$ such that
\begin{equation}\label{eq:weak_formulation}
  \left\langle  \frac{\partial u}{\partial t}, v \right\rangle + a(u(t), v) = \langle f(t), v \rangle,
\end{equation}
for all $v \in H^1_0(\Omega)$ and for almost every $t\in (0,T)$, with $u(0)=u_0$, where $\langle \cdot, \cdot \rangle$ denotes the $H^{-1}(\Omega)$--$H^1_0(\Omega)$ duality pairing, and
\begin{equation}
    \nonumber
  a(u, v) = \int \limits_\Omega \nabla u \cdot \nabla v \, d\Omega.
\end{equation}
Under these assumptions, $u \in C\big([0,T]; L^2(\Omega)\big)$, so the initial condition $u(0)=u_0$ is meaningful.

Some assumptions regarding the domain and the mesh are necessary here. Let $\Omega \in \mathbb{R}^d$, with $d=2,3$, be a bounded polygonal/polyhedral Lipschitz domain. Consider a polygonal/polyhedral mesh $\mathcal{T}_h$ of $\Omega$, where each element $E \in \mathcal{T}_h$ has diameter $h_E$, with
\begin{equation}
    \nonumber
    h = \max \limits_{E \in \mathcal{T}_h} h_E, \; h_{\min} = \min \limits_{E \in \mathcal{T}_h} h_E. 
\end{equation}
Assume the mesh satisfies the following regularity conditions:
\begin{itemize}
    \item Each element $E \in \mathcal{T}_h$ is star-shaped with respect to a ball $\mathcal{B}_E$ of radius $\rho_E \geq C_\gamma h_E$, where $C_\gamma > 0$ is a constant independent of the mesh;
    \item The number of edges (or faces) per element is uniformly bounded, and edge (or face) length are comparable to $h_E$ (up to a constant independent of the mesh);
    \item The mesh is quasi-uniform in the sense that $h$ and $h_{\min}$ are bounded.
\end{itemize}

The virtual element formulation presented next is based on \cite{beirao2013vem}, \cite{veiga2014hitchhiker} and \cite{vacca2015parabolic}.

\subsection{Extended virtual element space}
\label{sec:extended_ve_space}

Consider the arbitrary decomposition $\mathcal{T}_h$ in simple polygons $E$ where $h$ is the discretization parameter (e.g., the maximum polygonal diameter of all elements). Also, let $k \in \mathbb{N}$, be the order of the method that is related to the degree of the polynomial projections explored later in this text. The augmented virtual element space is given by:
\begin{equation}\label{eq:augmented_virtual_space}
    \overline{V}^E_{h,k} = \left\{ v \in H^1(E) \cap C^0(E)  :v|_e \in \mathbb{P}_k(e), \forall e\in\partial E, \Delta v \in \mathbb{P}_k(E)  \right\},
\end{equation}
where $\{ e \}_{e \in \partial E}$ indicates the set of edges of the polygon $E$, and $\mathbb{P}_k(E)$ is the polynomial space of polynomials of degree at most $k$ with dimension given by:
\begin{equation}\label{eq:polynomial_space_dim}
    N_k = \frac{(k+1)(k+2)}{2}.
\end{equation}
In this space, $v \in H^1(E) \cap C^0(E)$, i.e. functions $v$ in this space must belong to $H^1(E)$, the Sobolev space of functions with square-integrable first derivative over $E$. They must also be continuous over $E$, ensuring that each $v$ has no discontinuities over $E$. This space contains functions satisfying boundary and Laplace conditions. However, it is too large for direct numerical computations because it includes functions that are defined implicitly and cannot be directly evaluated. It is the theoretical foundation to define the actual virtual element space $V^E_{h,k}$ which includes additional constraints and projections. 

The functions in both the extended and the virtual element space are computed implicitly using two essential elements in the VEM theory: the degrees of freedom (DOF) and the projection operators. The DOF are defined as follows.

\begin{defin} \label{def:degrees of freedom}
    The set of degrees of freedom functionals $\{ \chi_i^E \}^{N_{dof}^E}_{i=1}$, with $\chi_i^E: \overline{V}^{E}_{h,k}\!\longrightarrow\!\mathbb{R}$, is a collection of linear functionals that uniquely determine any function in $\overline{V}_{h,k}^E$, where $N_{dof}^E$ is the total number of degrees of freedom associated to the element $E$ given by:
    \begin{equation}\label{eq:ndofs}
        N^E_{\text{dof}} = N^E_V + N^E_V (k - 1) + \frac{(k-1)k}{2},
    \end{equation}
    where $N^E_V$ is the number of vertices of polygon $E$. Each $\chi_i^E$ is associated with a specific DOF:
    \begin{enumerate}
        \item (Vertex DOF functional) For all $k \geq 1$,
        \begin{equation}\label{eq:dof_vertex}
            \chi^E_{V_i}(u) = u(V_i),
        \end{equation}
        where $V_i$ is a vertex of $E$.
        \item (Edge moment DOF functional) For $k \geq 2$,
        \begin{equation}\label{eq:dof_edge}
            \chi^E_{e,j} (u) = \frac{1}{|e|} \int \limits_e u(s) \hat{L}_j (s) ds,
        \end{equation}
        with $j=0,2,...,k-2$, where $e \in \partial E$ is an edge, $|e|$ is the length of the edge, and $\hat{L}_j$ is the orthonormal Legendre polynomial of degree $j$ on $e$ mapped to $[-1,1]$.
        \item (Interior moment DOF functional) For $k \geq 2$,
        \begin{equation}\label{eq:dof_moment}
            \chi^E_{\alpha, i}(u) = \frac{1}{|E|} \int \limits_E u_i m_{\boldsymbol\alpha} dE
        \end{equation}
        with $i=1,2$ and $m_{\boldsymbol\alpha}$ the scaled monomial of total degree $|\boldsymbol\alpha|\le k-2$.
    \end{enumerate}
\end{defin}

With the degrees of freedom already specified, it is possible to introduce the local canonical basis functions $\{\psi_i\}_{i=1}^{N_E}\subset \overline{V_{h,k}^E}$. Each $\psi_i$ is uniquely associated with a DOF functional
$\chi_i : \overline{V}_{h,k}^E\!\longrightarrow\!\mathbb{R}$ and is characterized by the unisolvence condition
\begin{equation}
    \nonumber
    \chi_j(\psi_i)=\delta_{ij}
    \quad\text{for every } j=1,\dots,N^E_{\text{dof}}\;
\end{equation}
where $\delta_{ij}$ denotes the Kronecker delta. For clarity, the action of $\chi_j$ in the three DOF families can be written explicitly:
\begin{equation}
    \nonumber
    \psi_i(V_j)=\delta_{ij},\qquad
    \frac{1}{|e|}\int_{e}\psi_i\,\widehat L_q\,ds=\delta_{ij},\qquad
    \frac{1}{|E|}\int_{E}\psi_i\,m_{\boldsymbol\alpha}\,dE=\delta_{ij},
\end{equation}
These relations guarantee that the set $\{\psi_i\}$ forms a basis of $\overline{V_{h,k}^E}$ and that the DOF map $\,v_h\longmapsto\bigl(\chi_1(v_h),\dots,\chi_{N_E}(v_h)\bigr)$ is bijective.

Regarding the monomials mentioned above, they form a basis to the polynomial space $\mathbb{P}_k(E)$. In particular, the scaled monomial for two-dimensions is given by:
\begin{equation}
    \nonumber
    m_{\boldsymbol{\alpha}}(x,y) = \left( \frac{x-x_c}{h_E} \right)^{\alpha_1} \left( \frac{y-y_c}{h_E} \right)^{\alpha_2}, \quad \alpha_1, \alpha_2\in \mathbb{N}, \; \alpha_1 + \alpha_2 \leq k,
\end{equation}
where $(x_c, y_c)$ is the centroid and $h_E$ is the polygonal diameter. It is worth mentioning that, the monomial are not approximating polynomials in the sense of fitting data. They are the exact basis for $\mathbb{P}_k(E)$, used to express any polynomial of degree up to $k$.

\subsection{Energy projection}
\label{sec:energy_projection}

The energy projection operator $\Pi^{\nabla}_{k,E}\;:\;\overline{V}^{E}_{h,k}\;\longrightarrow\;\mathbb{P}_{k}(E)$ maps a function $v \in \overline{V}_{h,k}^E$ onto the polynomial space $\mathbb{P}_k(E)$, ensuring certain properties of $v$ are preserved. The projection $\Pi^{\nabla}_{k,E}$ is defined by the following two conditions:
\begin{enumerate}
    \item \textbf{Gradient orthogonality}: For all $q\in \mathbb{P}_k(E)$:
    \begin{equation}\label{eq:proj_grad_ortho}
        \int_{E}\nabla\bigl(\Pi^{\nabla}_{k,E} v\bigr)\cdot\nabla q\,dE=\int_{E}\nabla v\cdot\nabla q\,dE.
    \end{equation}
    Equation (\ref{eq:proj_grad_ortho}) can be written as:
    \begin{equation}\label{eq:proj_grad_ortho_v2}
        a^E(\Pi^{\nabla}_{k,E} v, q) = a^E(v, q),
    \end{equation}
    where $a^E(\cdot, \cdot)$ is the bilinear form given by:
    \begin{equation}\label{eq:bilinear_form}
        a^E(u, v) = \int \limits_E \nabla u \cdot \nabla v\,dE.
    \end{equation}
    This condition ensures that the gradient of $\Pi^{\nabla}_{k,E} v$ matches the gradient of $v$ when tested against polynomials in $\mathbb{P}_k(E)$.
    \item \textbf{Zero-mean condition}: The operator $\Pi^{\nabla}_{k,E}$ must also satisfy a consistency condition for its mean value:
    \begin{equation}\label{eq:zero_mean_condition}
        \hat{\Pi}_{k,E}^\nabla (\Pi^{\nabla}_{k,E} v) = \hat{\Pi}_{k,E}^\nabla (v),
    \end{equation}
    where $\hat{\Pi}_{k,E}^\nabla$ represents the mean value projection operator, which depends on the degree $k$. For $k = 1$,
    \begin{equation}
    \nonumber
        \hat{\Pi}_{k,E}^\nabla(v) = \frac{1}{N^E_v} \sum \limits^{N^E_v}_{i=1} v(V_i),
    \end{equation}
    where $N^E_v$ is the number of vertices of the polygonal element $E$. And, for $k > 1$,
    \begin{equation}
    \nonumber
        \hat{\Pi}_{k,E}^\nabla(v) = \frac{1}{|E|} \int \limits_E v\, dE.
    \end{equation}
    In particular, this condition ensures that the projection retains the same mean value as the original function $v$. 
\end{enumerate}

The projection operator holds two fundamental properties. If $v \in \mathbb{P}_k(E)$, then
\begin{equation}
    \nonumber
    \Pi^\nabla_{k,E} v = v.
\end{equation}
This ensures that polynomials of degree $\leq k$ are preserved exactly by the projection. The second property refers to the fact that the operator is stable in the sense that the norms of $v$ and $\Pi^\nabla_{k,E} v$ are computable:
\begin{equation}
    \nonumber
    \| \Pi^\nabla_{k,E} v \|_{L^2(\Omega)} \leq C \| \nabla v \|_{L^2(\Omega)},
\end{equation}
where $C$ is a constant independent of the mesh size $h_E$.

The accuracy of the energy projection operator $\Pi^\nabla_{k,E}$ is characterized by several approximation properties that are fundamental to the convergence analysis of the VEM. The most important property concerns the gradient approximation, which directly affects the energy norm and the overall accuracy of the method. For sufficiently smooth functions $v \in H^{k+1}(E)$, the energy projection satisfies:
\begin{equation}\label{eq:energy_proj_gradient_accuracy}
    \| \nabla(v - \Pi^\nabla_{k,E} v) \|_{L^2(E)} \leq C h_E^k |v|_{H^{k+1}(E)},
\end{equation}
where $C$ is a constant independent of the mesh size $h_E$ and $|v|_{H^{k+1}(E)}$ denotes the $(k+1)$-th order Sobolev seminorm. This property ensures optimal convergence in the energy norm, which is the natural norm for elliptic problems.

The $L^2$-approximation property of the energy projection is also crucial for the overall error analysis. Under the same regularity assumptions, the projection satisfies:
\begin{equation}\label{eq:energy_proj_l2_accuracy}
    \| v - \Pi^\nabla_{k,E} v \|_{L^2(E)} \leq C h_E^{k+1} |v|_{H^{k+1}(E)}.
\end{equation}
This $L^2$-approximation property is one order higher than the gradient approximation.

Also, the accuracy of the energy projection is intimately connected to the polynomial exactness property. Since $\Pi^\nabla_{k,E} p = p$ for all $p \in \mathbb{P}_k(E)$, the projection error vanishes exactly for polynomial functions of degree at most $k$. This property ensures that the method achieves optimal convergence rates when the exact solution has sufficient regularity.

Furthermore, the energy projection operator satisfies a crucial consistency property in the discrete bilinear form. For any $u, v \in \overline{V}_{h,k}^E$, the following identity holds:
\begin{equation}\label{eq:energy_proj_consistency}
    a^E(\Pi^\nabla_{k,E} u, \Pi^\nabla_{k,E} v) = a^E(u, \Pi^\nabla_{k,E} v) = a^E(\Pi^\nabla_{k,E} u, v).
\end{equation}
This consistency property is essential for proving the stability and convergence of the method, as it ensures that the discrete bilinear form inherits the coercivity and continuity properties of the continuous bilinear form.

The projection error can be bounded in terms of the local mesh geometry through the shape regularity assumption. For star-shaped polygons satisfying the usual VEM geometric assumptions, the constants in the approximation estimates (\ref{eq:energy_proj_gradient_accuracy}) and (\ref{eq:energy_proj_l2_accuracy}) depend only on the shape regularity parameter $\gamma^E = h_E/\rho_E$, where $\rho_E$ is the radius of the largest circle inscribed in $E$. This geometric dependence ensures that the projection accuracy is maintained under mesh refinement as long as the elements remain well-shaped.

The accuracy analysis extends to the treatment of boundary conditions through the degrees of freedom. The vertex and edge degrees of freedom provide sufficient information to accurately reconstruct the boundary values of $v$, while the interior moment degrees of freedom capture the interior behavior. The combination of these degrees of freedom ensures that the projection $\Pi^\nabla_{k,E} v$ approximates $v$ optimally in both the interior and on the boundary of the element.

The energy projection operator exhibits a superconvergence property for certain classes of problems. When the exact solution possesses additional regularity beyond $H^{k+1}(E)$, the projection error can achieve higher convergence rates in specific norms or at particular points. This superconvergence behavior is particularly relevant for post-processing techniques and error estimation procedures in adaptive mesh refinement algorithms.

\subsection{Virtual element space}
\label{sec:virtual_element_space}

With the projection operator properly defined, it is possible to introduce the virtual element space:
\begin{equation}\label{eq:ve_space}
    V_{h,k}^E = \left\{ v \in \overline{V}_{k,E} : \int \limits_E \left( \Pi^{\nabla, scal}_{k,E} v \right) q\;dE= \int \limits_E vq \; dE, \; \forall q \in \mathbb{P}_k/\mathbb{P}_{k-2}(E) \right\},
\end{equation}
where $\mathbb{P}_k(E)/\mathbb{P}_{k-2}(E)$ denotes the polynomials in $\mathbb{P}_k(E)$ that are orthogonal to all polynomials in $\mathbb{P}_{k-2}(E)$ with respect to the $L^2$-inner product such that
\begin{equation}
    \nonumber
    \int \limits_E pq dE = 0,
\end{equation}
for all $p \in \mathbb{P}_k/\mathbb{P}_{k-2}(E)$ and for all $q \in \mathbb{P}_k(E)$. Note that by the definition of virtual element space, it is known that $\mathbb{P}_k(E) \subseteq V^E_{k,h}$.

Intuitively, $\mathbb{P}_k/\mathbb{P}_{k-2}(E)$ represents the part of $\mathbb{P}_k(E)$ that is not already covered by $\mathbb{P}_{k-2}(E)$. In this way, using the direct sum, $\mathbb{P}_k(E)$ can be decomposed in two orthogonal subspaces:
\begin{equation}
    \nonumber
    \mathbb{P}_k(E) = \mathbb{P}_{k-2}(E) \oplus \mathbb{P}_k/\mathbb{P}_{k-2}(E).
\end{equation}
This decomposition allows the method to handle difference components of $w \in V^E_{h,k}$ separately: lower--degree components in $\mathbb{P}_{k-2}(E)$ that are typically associated with basic physical behavior (e.g., rigid body translations), and higher--degree components in $\mathbb{P}_k/\mathbb{P}_{k-2}(E)$ that captures additional fine-scale features (e.g., linear--varying strains).

\begin{rem}
    Using $\mathbb{P}_{k-1}(E)$ in $\mathbb{P}_k/\mathbb{P}_{k-1}(E)$ would lead to  neglect polynomial terms, leading to an information loss. The degree $k-1$ terms are critical for capturing the gradient behavior of $v$, especially in the stiffness matrix construction. By using $\mathbb{P}_k/\mathbb{P}_{k-2}(E)$, it is ensured that both the gradient-related terms (degree $k-1$) and the highest-order terms (degree $k$) are considered. For example, suppose $k=2$, then $\mathbb{P}_{2} = span \{ 1, x, y, x^2, xy, y^2 \}$. The space $\mathbb{P}_2/\mathbb{P}_0(E)$ includes the components orthogonal to the constants in $\mathbb{P}_0(E)$, such that $\mathbb{P}_2/\mathbb{P}_0(E)=span\{ x, y, x^2, xy,  y^2 \}$. On the other hand, space $\mathbb{P}_2/ \mathbb{P}_1 (E)$ includes only the degree $2$ components such that $\mathbb{P}_2/\mathbb{P}_1(E) = span \{ x^2, xy, y^2 \}$. The second choice would neglect the constant and linear terms.
\end{rem}

The constraint
\begin{equation}\label{eq:enhancement_condition}
    \int \limits_E \left( \Pi^{\nabla}_{k,E} v \right) q\;dE = \int \limits_E vq \; dE
\end{equation}
is called enhancement condition, and it is essential for the VEM formulation to ensure the projection $\Pi^\nabla_{k,E} v$ properly represents $w$ when tested against the higher degree polynomial components in $\mathbb{P}_{k}/\mathbb{P}_{k-1}(E)$. To illustrate its role in the method's theory, for simplicity, consider the scalar version of the bilinear form presented in (\ref{eq:bilinear_form}):
\begin{equation}\label{eq:continuous_bilinear_stiff}
    \nonumber
    a^E(u,v) = \int \limits_E \nabla u \nabla v dE.
\end{equation}
The gradient $\nabla u$ and $\nabla v$ are a first order derivative. So, if $u,v \in \mathbb{P}_k(E)$, then $\nabla u, \nabla v\in \mathbb{P}_{k-1}(E)$. As result, $a^E(u,v)$ depends only on the degree $k-1$ components of $u$ and $v$. However, this bilinear form does not directly involve the degree $k$ components of $u$ and $v$. Those components contribute to $u$ and $v$ themselves but do not affect their gradients. The missing degree $k$ components can be accounted by imposing the enhancement condition. This constraint properly tests the projection $\Pi^\nabla_{k,E} v$ against the higher degree components $q \in \mathbb{P}_k / \mathbb{P}_{k-2}(E)$, which are orthogonal to lower-degree components.

\subsection[L2-projection operator]{$L^2$-projection operator}
\label{sec:l2_projection}

The standard $L^2$-projection operator $\Pi^0_{k,E}: V_{h,k}^E \longrightarrow \mathbb{P}_k(E)$ maps $v \in V_{h,k}^E$ to a polynomial space while preserving the $L^2$-inner product structure. This projection operator is originally introduced in \cite{ahmad2013projectors}, and is defined as the unique polynomial $\Pi^0_{k,E} v \in \mathbb{P}_k(E)$ that satisfies the orthogonality condition:
\begin{equation}\label{eq:l2_projection_def}
    \int \limits_E \left( \Pi^0_{k,E} v \right) q \, dE = \int \limits_E v q \, dE, \quad \forall q \in \mathbb{P}_k(E).
\end{equation}
This definition ensures that $\Pi^0_{k,E} v$ approximates $v$ in the sense of the $L^2$-norm.

The $L^2$-projection operator possesses several fundamental properties that are essential for the VEM formulation. First, it satisfies the polynomial preservation property: if $v \in \mathbb{P}_k(E)$, then
\begin{equation}\label{eq:l2_poly_preservation}
    \Pi^0_{k,E} v = v.
\end{equation}
This ensures that polynomials of degree at most $k$ are preserved exactly by the projection. Second, the operator is stable in the $L^2$-norm:
\begin{equation}\label{eq:l2_stability}
    \| \Pi^0_{k,E} v \|_{L^2(E)} \leq \| v \|_{L^2(E)},
\end{equation}
where the inequality reflects the fact that the projection minimizes the $L^2$-distance to the polynomial space.

To compute $\Pi^0_{k,E} v$, the projection is expressed in terms of the monomial basis $\{m_{\boldsymbol{\alpha}}\}$ of $\mathbb{P}_k(E)$:
\begin{equation}\label{eq:l2_expansion}
    \Pi^0_{k,E} v = \sum_{|\boldsymbol{\alpha}| \leq k} c_{\boldsymbol{\alpha}} m_{\boldsymbol{\alpha}},
\end{equation}
where $c_{\boldsymbol{\alpha}} \in \mathbb{R}$ are the coefficients to be determined. Substituting this expansion into the orthogonality condition (\ref{eq:l2_projection_def}) yields the linear system:
\begin{equation}\label{eq:l2_linear_system}
    \sum_{|\boldsymbol{\beta}| \leq k} c_{\boldsymbol{\beta}} \int_E m_{\boldsymbol{\beta}} m_{\boldsymbol{\alpha}} \, dE = \int_E v\, m_{\boldsymbol{\alpha}} \, dE, \quad \forall |\boldsymbol{\alpha}| \leq k.
\end{equation}
The matrix $\mathbf{M}$ with entries $M_{\boldsymbol{\alpha}, \boldsymbol{\beta}} = \int_E m_{\boldsymbol{\alpha}} m_{\boldsymbol{\beta}} \, dE$ is the mass matrix for the monomial basis, which is symmetric and positive definite.

The right-hand side of system (\ref{eq:l2_linear_system}) must be computed using the available degrees of freedom, since functions in $V_{h,k}^E$ are not explicitly known. For the interior moment DOFs defined in equation (\ref{eq:dof_moment}), the computation is straightforward:
\begin{equation}\label{eq:l2_interior_moments}
    \int_E v\, m_{\boldsymbol{\alpha}} \, dE = |E| \chi^E_{\boldsymbol{\alpha}}(v), \quad |\boldsymbol{\alpha}| \leq k-2.
\end{equation}
For monomials of degree $k-1$ and $k$, the integrals must be computed using integration by parts and the boundary DOFs. Specifically, for $|\boldsymbol{\alpha}| = k-1$ or $|\boldsymbol{\alpha}| = k$, the divergence theorem provides:
\begin{equation}\label{eq:l2_boundary_contribution}
    \int_E v\, m_{\boldsymbol{\alpha}} \, dE = \int_E v\, \nabla \left( \frac{m_{\boldsymbol{\alpha}+\mathbf{e}_i}}{(\boldsymbol{\alpha}+\mathbf{e}_i) \cdot \mathbf{e}_i} \right) dE - \int_{\partial E} v\, \mathbf{n} \cdot \frac{\nabla m_{\boldsymbol{\alpha}+\mathbf{e}_i}}{(\boldsymbol{\alpha}+\mathbf{e}_i) \cdot \mathbf{e}_i} \, ds,
\end{equation}
where $\mathbf{e}_i$ denotes the $i$-th canonical basis vector and the boundary integral is evaluated using the vertex and edge DOFs.

The $L^2$-projection operator plays a crucial role in the VEM formulation through several mechanisms. First, it enables the construction of computable approximations to integrals involving functions in the virtual element space. Since functions in $V_{h,k}^E$ are not explicitly known, integrals of the form $\int_E v w \, dE$ cannot be computed directly. However, the approximation
\begin{equation}\label{eq:l2_integral_approximation}
    \int_E v w \, dE \approx \int_E \Pi^0_{k,E} v \; \Pi^0_{k,E} w \, dE
\end{equation}
provides a computable alternative that maintains the accuracy of the method.

Second, the $L^2$-projection operator is essential for the construction of the mass matrix in time-dependent problems. In the context of parabolic problems, the mass matrix entries are given by:
\begin{equation}\label{eq:mass_matrix_entries}
    M_{ij} = \int_E \psi_i \, \psi_j \, dE \approx \int_E \Pi^0_{k,E} \psi_i \, \Pi^0_{k,E} \psi_j \, dE,
\end{equation}
where $\{\psi_i\}$ are the basis functions of the virtual element space.

Third, the operator is fundamental for the treatment of body forces and distributed loads in the variational formulation. The load vector components are computed as:
\begin{equation}\label{eq:load_vector}
    F_i = \int_E f \, \psi_i \, dE \approx \int_E f \, \Pi^0_{k,E} \psi_i \, dE,
\end{equation}
where $f$ represents the body force field.

The relationship between the $L^2$-projection operator and the energy projection operator $\Pi^{\nabla}_{k,E}$ is particularly important. While $\Pi^{\nabla}_{k,E}$ preserves the gradient structure and is used for the stiffness matrix construction, $\Pi^0_{k,E}$ preserves the $L^2$-inner product and is used for mass matrix and load vector computations. These two operators are generally distinct, and their combined use ensures that the VEM formulation maintains both the energy structure and the $L^2$-approximation properties of the continuous problem.

The accuracy of the $L^2$-projection operator is characterized by the approximation property:
\begin{equation}\label{eq:l2_approximation_property}
    \| v - \Pi^0_{k,E} v \|_{L^2(E)} \leq C h_E^{k+1} |v|_{H^{k+1}(E)},
\end{equation}
where $C$ is a constant independent of the mesh size $h_E$ and $|v|_{H^{k+1}(E)}$ denotes the $(k+1)$-th order Sobolev seminorm. This property ensures that the projection error decreases optimally with mesh refinement, which is essential for the overall convergence of the VEM.

Finally, the $L^2$-projection operator satisfies a commuting property with the energy projection operator for polynomial functions:
\begin{equation}\label{eq:commuting_property}
    \Pi^0_{k,E} \Pi^{\nabla}_{k,E} p = \Pi^{\nabla}_{k,E} p = p, \quad \forall p \in \mathbb{P}_k(E).
\end{equation}
This property ensures consistency between the two projection operators when applied to polynomial functions, which is crucial for maintaining the exactness of the method on polynomial solutions.

\subsection{Discrete bilinear forms}
\label{sec:discrete_bilinear}

Two computable and discrete bilinear forms are defined. The first form is related to the gradient term, with $a^E_h: V^E_{h,k} \times V^E_{h,k}\longrightarrow \mathbb{R}$:
\begin{equation}
    \nonumber
    a^E_h(\cdot, \cdot) \sim  a^E(\cdot,  \cdot).
\end{equation}
It is constructed using the gradient projection operator $\Pi^\nabla_{k,E}$, ensuring it is computable using only the degrees of freedom. This bilinear is mostly associated with the computation of the stiffness matrix of the discrete system, and it governs the gradients of the virtual element basis functions locally on each element of $E$. Here, "$\sim$" means uniform equivalence (refers to stability bounds regarding coercivity and continuity with mesh independent constants).

The second form is associated with the computation of the mass matrix, with $m^E_h: V^E_{h,k} \times V^E_{h,k}\longrightarrow \mathbb{R}$:
\begin{equation}
    \nonumber
    m^E_h(\cdot, \cdot) \sim  \langle \cdot, \cdot\rangle_E,
\end{equation}
where $\langle \cdot, \cdot \rangle_E$ is the usual $L^2$-inner product. This discrete bilinear form is constructed using the $L^2$-projection operator $\Pi^0_{k,E}$, which ensure the mass matrix is computable using only the discrete system,  and is plays a crucial role in problems involving time-dependent equations.

Both discrete bilinear forms must satisfy \textit{k}-consistency and stability conditions. For all $E\in \mathcal{T}_h$, all $p \in \mathbb{P}_k(E)$ and for all $n \in V^E_{h,k}$, the \textit{k}-consistency guarantees that:
\begin{equation}
    \nonumber
    \begin{split}
        a^E_h(p,v) &= a^E(p,v), \\
        m^E_h(p,v) &= \langle p,v \rangle_E
    \end{split}
\end{equation}
The gradient approximation $a^E_h$ exactly reproduces the continuous bilinear form $a^E$ when one argument is as polynomial. It guarantees that the gradient-based energy projection operator $\Pi^\nabla_{k,E}$ acts consistently for polynomials of degree less or equal to $k$. Also, the mass approximation $m^E_h$ matches the exact $L^2$-inner product for polynomials, and guarantees that the $L^2$-projection operator preserves polynomial components. The consistency property ensures exactness on polynomials, and it is essential for the accuracy of the method. It ensures that the discrete formulation correctly approximates the continuous variational formulation at least for the polynomial space $\mathbb{P}_k(E)$.

The stability condition ensures that there exist four positive constants $\alpha_*$, $\alpha^*$, $\beta_*$, and $\beta^*$ that are mesh-independent, such that
\begin{equation}\label{eq:stability_condition}
    \begin{split}
        \alpha_* a^E(v,v) &\leq a^E_h(v,v) \leq  \alpha^* a^E(v,v), \\
        \beta_* \langle  v,v \rangle_E &\leq m^E_h(v,v) \leq \beta^* \langle v,v \rangle_E.
    \end{split}
\end{equation}
The discrete bilinear form $a^E_h$ must be bounded above and below by the continuous form $a_{E}$ for all $v \in V^E_{h,k}$ to ensure that the associated stiffness matrix is well-conditioned. Similarly, the mass bilinear form $m^E_h$ must be bounded by the $L^2$-inner product to ensure that the associated mass matrix is also well-conditioned and provides consistent time discretization. Stability is crucial for the well-posedness of the discrete  form, and it ensures that the discrete problem inherits the coercivity and continuity of the continuous problem.

The global forms are given by:
\begin{equation}\label{eq:global_energy_bilinear_form}
    a_h(\cdot, \cdot) = \sum \limits_{E\in\mathcal{T}_h}a^E_h (\cdot, \cdot)
\end{equation}\
and
\begin{equation}\label{eq:global_mass_bilinear_form}
    m_h (\cdot, \cdot) = \sum \limits_{E\in \mathcal{T}_h} m^E_h (\cdot, \cdot) = \langle \cdot, \cdot \rangle_h.
\end{equation}

The next lemma refers to the stability condition presented in (\ref{eq:stability_condition}) in terms of norm equivalence.
\begin{lemma}
    Consider the bilinear form in (\ref{eq:global_energy_bilinear_form}) and (\ref{eq:global_mass_bilinear_form}). There exist mesh independent constants $\alpha_*, \alpha^*, \beta_*, \beta^* > 0$ such that:
    \begin{equation}\label{eq:aux_ineq_energy_stab}
        \alpha_* \| \nabla v_h \|_{L^2(\Omega)}^2 \leq a_h (v_h, v_h) \leq \alpha^* \| \nabla v_h \|^2_{L^2(\Omega)}
    \end{equation}
    and
    \begin{equation}\label{eq:aux_ineq_mass_stab}
        \beta_* \| v_h \|_{L^2(\Omega)}^2 \leq \langle v_h, v_h \rangle_h \leq \beta^* \| v_h \|_{L^2{\Omega}}^2
    \end{equation}
    for all $v_h \in V_h$.
\end{lemma}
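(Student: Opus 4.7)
The plan is to derive the global norm equivalence by a straightforward element-wise summation of the local stability bounds. First, I would observe the pointwise identities
\begin{equation*}
a^E(v,v) \;=\; \int_E |\nabla v|^2\,dE \;=\; \|\nabla v\|_{L^2(E)}^2,
\qquad
\langle v,v\rangle_E \;=\; \|v\|_{L^2(E)}^2,
\end{equation*}
which follow immediately from the definitions \eqref{eq:bilinear_form} and from the $L^2$-inner product on $E$. Summing over the mesh and using the fact that $\{E\}_{E\in\mathcal{T}_h}$ is a disjoint partition of $\Omega$ (up to measure zero) yields
\begin{equation*}
\sum_{E\in\mathcal{T}_h} a^E(v_h,v_h) \;=\; \|\nabla v_h\|_{L^2(\Omega)}^2,
\qquad
\sum_{E\in\mathcal{T}_h} \langle v_h,v_h\rangle_E \;=\; \|v_h\|_{L^2(\Omega)}^2,
\end{equation*}
for every $v_h \in V_h$.

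Next, I would invoke the local stability condition \eqref{eq:stability_condition}, which holds on each element with the same mesh-independent constants $\alpha_*,\alpha^*,\beta_*,\beta^*$. Since every quantity involved is non-negative, the inequalities are preserved under summation over $E\in\mathcal{T}_h$. Recalling the definitions \eqref{eq:global_energy_bilinear_form} and \eqref{eq:global_mass_bilinear_form} of the global forms, this gives
\begin{equation*}
\alpha_* \sum_{E} a^E(v_h,v_h) \;\le\; \sum_{E} a^E_h(v_h,v_h) \;=\; a_h(v_h,v_h) \;\le\; \alpha^* \sum_{E} a^E(v_h,v_h),
\end{equation*}
and analogously $\beta_* \sum_{E}\langle v_h,v_h\rangle_E \le \langle v_h,v_h\rangle_h \le \beta^* \sum_{E}\langle v_h,v_h\rangle_E$. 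Substituting the identities from the previous paragraph then yields \eqref{eq:aux_ineq_energy_stab} and \eqref{eq:aux_ineq_mass_stab} with exactly the same constants as in the local bounds.

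There is essentially no obstacle to this argument: it reduces to a term-by-term sum of the hypotheses. The only point that deserves emphasis is the \emph{mesh-independence} of $\alpha_*,\alpha^*,\beta_*,\beta^*$, which is inherited verbatim from \eqref{eq:stability_condition}; because these constants are uniform over $E\in\mathcal{T}_h$, they can be factored out of the sum without introducing any dependence on $h$ or on the number of elements. Consequently, the global equivalence holds with the same four constants, completing the proof.
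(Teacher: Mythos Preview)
Your argument is correct: summing the local stability bounds \eqref{eq:stability_condition} over the elements and using additivity of the $L^2$-norm over a disjoint partition immediately yields \eqref{eq:aux_ineq_energy_stab}--\eqref{eq:aux_ineq_mass_stab} with the same mesh-independent constants.

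The paper follows the same summation strategy but takes one extra step before invoking the local bound: rather than citing \eqref{eq:stability_condition} as a hypothesis, it re-derives the elementwise inequality from the explicit consistency-plus-stabilization structure \eqref{eq:discrete_energy_form} together with the stabilization hypothesis \eqref{eq:stability_term_condition}. Decomposing $v_h=\Pi^\nabla_{k,E}v_h+w_h$ with $w_h\in\ker(\Pi^\nabla_{k,E})$ and using the energy-orthogonality of this splitting, the paper obtains $a^E_h(v_h,v_h)=\|\nabla\Pi^\nabla_{k,E}v_h\|_{L^2(E)}^2+S^E(w_h,w_h)$ and then applies \eqref{eq:stability_term_condition} to identify the constants explicitly as $\alpha_*=\min(1,C_0)$, $\alpha^*=\max(1,C_1)$. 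Your route is more economical because \eqref{eq:stability_condition} is already stated in the paper as a standing assumption; the paper's route is more informative because it exhibits how those constants arise from the stabilization term. Either way the global step is the same term-by-term sum you wrote.
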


\begin{proof}
    Consider the decomposition for $v_h \in \localvespace$:
    \begin{equation}
    \nonumber
        v_h = \scalenergyproj v_h + w_h, \; \text{with} \; w_h = (I - \scalenergyproj)v_h \in \ker (\scalenergyproj).
    \end{equation}
    Considering the e local bilinear form in (\ref{eq:continuous_bilinear_stiff}), by orthogonality, it holds that:
    \begin{equation}
    \nonumber
        a^E(\scalenergyproj v_h, w_h) = 0.
    \end{equation}
    So, it is possible to write:
    \begin{equation}
    \nonumber
       \begin{split}
            \| \nabla v_h \|^2_{L^2(\Omega)} = a^E (\scalenergyproj v_h + w_h, \scalenergyproj v_h + w_h) = \\
            a^E(\scalenergyproj v_h, \scalenergyproj v_h) + 2 a^E(\scalenergyproj v_h, w_h) + a^E(w_h, w_h) = \\
            a^E(\scalenergyproj v_h, \scalenergyproj v_h) + a^E(w_h, w_h).
       \end{split}
    \end{equation}
    Thus,
    \begin{equation}
    \nonumber
        \| \nabla v_h \|^2_{L^2(\Omega)} = \| \nabla \scalenergyproj v_h \|^2_{L^2(\Omega)} + \| w_h \|^2_{L^2(\Omega)}.
    \end{equation}
    It holds true  that (this result is explored further in eq. (\ref{eq:stability_term_condition})):
    \begin{equation}
    \nonumber
        a^E_h(v_h, v_h) \geq \| \nabla \scalenergyproj v_h \|^2_{\lebesguespace{E}} + C_0 \| \nabla w_h \|^2_{\lebesguespace{E}} \geq \min (1, C_0) \| \nabla v_h \|^2_{\lebesguespace{E}},
    \end{equation}
    and
    \begin{equation}
    \nonumber
        a^E_h(v_h, v_h) \leq \| \nabla \scalenergyproj v_h \|^2_{\lebesguespace{E}} + C_1 \| \nabla w_h \|^2_{\lebesguespace{E}} \leq \max (1, C_1) \| \nabla v_h \|^2_{\lebesguespace{E}}.
    \end{equation}
    Summing over $E$ and using mesh regularity yields the global bounds with:
    \begin{equation}
    \nonumber
        \alpha_* = \min (1,C_0), \; \alpha^* = \max (1, C_1),
    \end{equation}
    resulting in (\ref{eq:aux_ineq_energy_stab}).

    Similar arguments and bounds hold for the mass form leading to (\ref{eq:aux_ineq_mass_stab}).
\end{proof}

\subsection{Stabilization term}
\label{sec:stabilization_term}

The projection $\Pi^\nabla_{k,E}$ is defined to preserve gradients against polynomials (see the \textit{Gradient orthogonality} and \textit{Zero-mean condition} in section \ref{sec:energy_projection}). It is a linear operator from a space from a space of dimension $2N_{dof}^E$ to one of dimension $2N_k$. By the Rank-nullity Theorem (see Appendix \ref{ap:results_maths}):
\begin{equation}
    \nonumber
    \dim ([V_{h,k}^E]^2)= 2N_{dof}^E - 2 N_k.
\end{equation}
The dimension grow with the number of vertices/edges once more vertices/edges mean more degrees of freedom enlarging the virtual element space without expanding the polynomial space, and with order $k$ once higher $k$ adds more edge and interior moments degrees of freedom. As can be seen the kernel functions $\mathbf{v}_h\in \ker (\Pi^\nabla_{k,E})$ are non-polynomial modes in the virtual element space, they satisfy all VEM constraints but have gradients orthogonal to polynomial gradients. In other words, they represent the extra flexibility VEM gains on polygons, but they are invisible to the projection. 

A naive choice would be:
\begin{equation}
    \nonumber
    a^E_h(u_h, v_h) = a^E (\Pi^\nabla_{k,E}u_h, \Pi^\nabla_{k,E}v_h).
\end{equation}
In this way, the discrete bilinear form would satisfy $k$-consistency (once it is exact on polynomials) but fails stability, specifically what refers to coercivity. For the discrete problem to be well-posed (unique solution via Lax-Milgram Theorem), the global bilinear form 
\begin{equation}
    \nonumber
    a_h(\cdot, \cdot ) = \sum \limits_{E \in \mathcal{T}_h}a^E_h(\cdot, \cdot),
\end{equation}
must be coercive, i.e., there exists $\alpha_* > 0$ that is independent of the mesh such that:
\begin{equation}
    \nonumber
    a_h(v_h, v_h) \geq \alpha_* \| \nabla v_h \|_{L^2(\Omega)}^2,
\end{equation}
for all $v_h \in V_h$. Locally, this requires that:
\begin{equation}
    \nonumber
    \alpha_* a^E(v_h, v_h) \leq a^E_h(v_h, v_h).
\end{equation}

The consistency term alone violates coercivity. Take any nonzero $v_h \in \ker (\Pi^\nabla_{k,E})$ (it exists if $N_{dof}^E > N_k$). Then, 
\begin{equation}\label{eq:aux_zero_implication}
    \Pi^\nabla_{k,E} v_h = 0 \Rightarrow a^E(\Pi^h_{k,E} v_h, \Pi^h_{k,E} v_h) = a^E(0, 0) = 0.
\end{equation}
The bilinear operator in (\ref{eq:bilinear_form})
\begin{equation}
    \nonumber
    a^E(v, v) = \int \limits_E |\nabla v |^2 dE
\end{equation}
is the $H^1$-seminorm. It is strictly positive for any nonzero $v$ that is not in the natural kernel of the operator (e.g., constants for the scalar Laplacian). Also, the projection operator is exact on polynomials of degree less or equal to $k$. Hence, any constant cannot lie in $\ker (\Pi^\nabla_{k,E})$ unless it is zero, because $\Pi^\nabla_{k,E}$ would return it unchanged. Therefore, if $v_h \in \ker (\Pi^\nabla_{k,E})$ and $v_h \neq 0$ it is not a constant. In this way, it holds that:
\begin{equation}\label{eq:aux_bilinear_estimation}
    a^E(v_h, v_h) = \int \limits_E |\nabla v_h |^2 dE = \| \nabla v_h \|^2_{L^2(\Omega)} > 0
\end{equation}
By (\ref{eq:aux_zero_implication}) and (\ref{eq:aux_bilinear_estimation}):
\begin{equation}
    \nonumber
    a^E_h(v_h, v_h) = 0 < \alpha_* a^E(v_h, v_h),
\end{equation}
for any $\alpha_* > 0$. Then, coercivity is violated because the discrete form doesn't account the energy in kernel modes.

Another argument regarding the naive choice of the bilinear term refers to the rank deficiency of the stiffness matrix. Represent $a^E$ as a matrix, and let $\{ \psi _i \}^{N^E_{\text{dof}}}_{i=1}$ be the canonical basis of $V^E_{h,k}$. The local stiffness matrix $\mathbf{K}^E$ has entries $K^E_{ij} = a^E_h(\psi _i, \psi _j)$. Using only consistency, the stiffness matrix entries are:
\begin{equation}
    \nonumber
    K^E_{ij} = a^E(\Pi^\nabla_{k,E} \psi_i, \Pi^\nabla_{k,E}  \psi_j).
\end{equation}\
The projection $\Pi^\nabla_{k,E}\psi _i$ exists in an $N_k$--dimensional space, so the matrix $\mathbf{K}^E$ (with size $N_{dof}^E \times N_{dof}^E$) has rank at most $N_k < N_{dof}^E$. It is rank deficient (null space dimension at least $N_{dof}^E - N_k$), leading to a singular system when assembled globally.

To address those issues, a symmetric positive semi-definite bilinear form $S^E: V^E_{h,k} \times V^E_{h,k} \longrightarrow \mathbb{R}$ is introduced such that:
\begin{equation}\label{eq:discrete_energy_form}
    a^E_h (u_h, v_h) = a^E (\Pi^\nabla_{k,E} u_h, \Pi^\nabla_{k,E} v_h) + S^E\left( (I-\Pi^\nabla_{k,E}) u_h, (I-\Pi^\nabla_{k,E}) v_h \right),
\end{equation}
where $I$ is the identity operator. It is worth mentioning that the stability term also satisfies the stability condition presented in (\ref{eq:stability_condition}):
\begin{equation}\label{eq:stability_term_condition}
    C_0 a^E(w, w) \leq S^E(w,w) \leq C_1 a^E(w,w), \; \forall w \in \ker(\scalenergyproj),
\end{equation}
with $C_0, C_1 >0$ mesh independent constants. All those arguments are also valid for the mass term. Although, as it is presented in Section \ref{sec:lumped_mass}, the stability term in the lumped mass formulation tends to disappear. 

In terms of matrix construction, the Gram matrix associated to the stiffness and mass matrices are, respectively:
\begin{equation}\label{eq:stiffness_gram}
    G^{E,\mathbf{K}}_{ij} = \int \limits_E \nabla \psi_i \nabla \psi_j dE,
\end{equation}
and
\begin{equation}\label{eq:mass_gram}
    G^{E,\mathbf{M}}_{ij} = \int \limits_E \psi_i \psi_j dE. 
\end{equation}
Thus, the stiffness and mass matrices are, respectively:
\begin{equation}
    \nonumber
    K^E_{ij} = G^{E,\mathbf{K}}_{ij} + |E| \gamma^{E,\mathbf{K}}  \langle (I-\Pi^\nabla_{k,E})\psi_i, (I-\Pi^\nabla_{k,E})\psi_j \rangle_E,
\end{equation}
and
\begin{equation}
    \nonumber
    M^E_{ij} = G^{E,\mathbf{M}}_{ij} + |E| \gamma^{E,\mathbf{M}}  \langle (I-\Pi^\nabla_{k,E})\psi_i, (I-\Pi^\nabla_{k,E})\psi_j \rangle_E,
\end{equation}
where $\gamma^{E,\mathbf{K}}, \gamma^{E,\mathbf{M}}>0$.

\section{Lumped mass formulation}
\label{sec:lumped_mass}

The semi-discrete VEM scheme approximates the weak formulation in (\ref{eq:weak_formulation}) as:
\begin{equation}\label{eq:discrete_form}
    m_h \left( \frac{\partial u_h}{\partial t}, v_h\right) + a_h(u_h, v_h) = \langle f_h, v_h \rangle
\end{equation}
for all $v_h \in V_{h,k}$, with almost every $t \in (0,T)$, with $u_{h,0} = u_h(0)$, where
\begin{equation}
    \nonumber
    f_h = \Pi^0_k f    
\end{equation}
is the $L^2$--projection source. In matrix form, this can be written as:
\begin{equation}\label{eq:matrix_form}
    \mathbf{M}_h \, \dot{\mathbf{u}}_h + \mathbf{K}_h \, \mathbf{u}_h = \mathbf{f}_h,
\end{equation}
where $\mathbf{M}_h$ and $\mathbf{K}_h$ are, respectively, the global mass and stiffness matrices. For explicit time-stepping, one needs to invert $\mathbf{M}_h$ repeatedly, which is costly if $\mathbf{M}_h$ is full. The mass lumping strategy replaces $\mathbf{M}_h$ with a diagonal $\hat{\mathbf{M}}_h$, enabling:
\begin{equation}
    \nonumber
    \mathbf{u}^{n+1}_h = \mathbf{u}^n_h - \Delta t \, (\hat{\mathbf{M}}_h)^{-1} \, \mathbf{K}_h \, \mathbf{u}_h^n + \Delta t \, (\hat{\mathbf{M}}_h)^{-1} \, \mathbf{f}_h^n.
\end{equation}

\subsection{The lumped mass matrix}
\label{sec:lumped_mass_matrix}

At local level, row-sum lumping computes a diagonal approximation of $\mathbf{M}^E$:
\begin{equation}
    \nonumber
    \hat{M}^E_{ii} \sum \limits^{N^E_{\text{dof}}}_{j=1} M^E_{ij}, \; \hat{M}^E_{ij} = 0, \; \forall i\neq j. 
\end{equation}
Let $\mathbf{s}^E \in \mathbb{R}^{N_k}$ be the local row-sum vector:
\begin{equation}
    \nonumber
    s^E_i = \sum \limits^{N^E_{\text{dof}}}_{j=1} M^E_{ij} = m^E_h \left( \sum \limits^{N^E_{\text{dof}}}_{j=1} \psi_j, \psi_i \right).
\end{equation}

By the partition of unity, the local basis satisfies
\begin{equation}
    \nonumber
    \sum \limits^E_{j=1}\psi_i = 1
\end{equation}
on $E$. Thus,
\begin{equation}\label{eq:local_row_sum}
    s^E_i = \int \limits_E \psi_i dE.
\end{equation}
So, the lumped mass matrix is given by:
\begin{equation}
    \nonumber
    \hat{\mathbf{M}}^E = \text{diag} \left( d_1, ..., d_{\numdofs} \right),
\end{equation}
where
\begin{equation}
    \nonumber
    d_i = \int \limits_E \psi_i dE.
\end{equation}

Note that in (\ref{eq:local_row_sum}) the stabilization term disappears. Because $\Pi^{0}_{k,E}$ is exact on $\mathbb{P}_k(E)$, it holds that:
\begin{equation}
    \nonumber
    (I - \Pi^{0}_{k,E})1 = 1 - \Pi^{0}_{k,E}1 = 0.
\end{equation}
The operator $(I-\Pi^{0}_{k,E})$ is the projection error operator, orthogonal to $\mathbb{P}_k(E)$ in the $L^2$--inner product. Since constants are in $\mathbb{P}_k(E)$, the error on constants is zero, ensuring stabilization (designed for non-polynomial modes) does not affect row-sums. This preserves mass conservation and positivity, as row-sums reduce to pure integrals without artificial scaling. So, it is possible to conclude that the stabilization term acts on the kernel of $\Pi^{0}_{k,E}$.  

\begin{rem}
    It is known that $\mathbb{P}_k(E)$ is the complement of $\ker (\Pi^{0}_{k,E})$. Recall that for any projector $\mathcal{P}$ on a vector space $V$ one has the canonical direct sum $V = \ker \mathcal{P} \oplus Im \, \mathcal{P}$, where $Im \, \mathcal{P}$ is the image of $\mathcal{P}$. For any $v_h \in V^E_{h,k}$, it is possible to write:
    \begin{equation}
    \nonumber
        v_h = \underbrace{(v_h - \Pi^{0}_{k,E} v_h)}_{\in \ker (\Pi^{0}_{k,E})} \oplus \underbrace{\Pi^{0}_{k,E} v_h}_{\in Im (\Pi^{0}_{k,E})}. 
    \end{equation}
    Also, $\ker (\Pi^{0}_{k,E}) \cap Im(\Pi^{0}_{k,E}) = \{ 0 \}$. Hence, the sum is direct.

    Considering the constraint (\ref{eq:enhancement_condition}), $\Pi^{0}_{k,E}v_h$ is the unique minimizer in $\mathbb{P}_k(E)$ of the $L^2$--distance to $v_h$. By definition $\scalmassproj v_h \in \P{E}$ for every $v_h \in \localvespace$, hence $Im (\scalmassproj)$. Now, taking $p \in \P{E}$, and plugging $v_h = p$ in (\ref{eq:enhancement_condition}):
    \begin{equation}
    \nonumber
        \int \limits_E (p - \scalmassproj p)\,q \, dE = 0, \: \forall q \in \P{E}.
    \end{equation}
    But $p - \scalmassproj p \in \P{E}$. The only element orthogonal to all elements of $\P{E}$ is $0$. Thus, 
    \begin{equation}
    \nonumber
        p - \scalmassproj p  = 0 \Rightarrow p = \scalmassproj p, \: \forall p \in \P{E}. 
    \end{equation}
    Therefore, every $p \in \P{E}$ lies in the image of $\scalmassproj$, and $Im (\scalmassproj) = \P{E}$. 
\end{rem}

In equation (\ref{eq:local_row_sum}), it seems like it is missing $\scalmassproj \psi_i$ in the integrand since the consistency term has projection on both arguments. However, this is not an omission but a simplification arising from the orthogonality of the projection error to constants which preserves the integral mean. Expanding the consistency term for $v_h = 1$:
\begin{equation}
    \nonumber
    \int \limits_E \scalmassproj 1 \scalmassproj \psi_i dE = \int \limits_E  \scalmassproj \psi_i dE. 
\end{equation}
By the projection definition, 
\begin{equation}
    \nonumber
    \int \limits_E (\scalmassproj \psi_i - \psi_i)q dE = 0, \; \forall q \in \P{E}.
\end{equation}
Considering $q = 1$, then:
\begin{equation}
    \nonumber
    \int \limits_E \scalmassproj \psi_i - \psi_i dE = 0 \Rightarrow \integral{E} \scalmassproj \psi_i dE = \integral{E} \psi_i dE.
\end{equation}

The next lemma shows that, as for the dense case, the lumped mass matrix continues to be symmetric positive definite.
\begin{lemma}[Positive definite]\label{lemma:spd}
    The lumped mass matrix $\M$ as defined so far is positive definite, i.e., $d_i > 0$ for all free degrees of freedom, with $i=1,...,N_{\text{dof}}$, and then
    \begin{equation}
        \nonumber
        \mathbf{v}^T\M \mathbf{v} > \mathbf{0}, \; \forall \mathbf{v} \neq \mathbf{0}
    \end{equation}
    on the free degrees of freedom.
\end{lemma}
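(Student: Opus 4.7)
The plan exploits the diagonal structure of $\M$: once each local weight $d_i = \int_E \psi_i\,dE$ from (\ref{eq:local_row_sum}) is shown strictly positive on free DOFs, the quadratic form $\mathbf{v}^T \M \mathbf{v} = \sum_i d_i v_i^2$ is manifestly positive for any nonzero $\mathbf{v}$ supported on free DOFs, and SPD follows immediately. The substance of the lemma therefore lies in proving $d_i > 0$.

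First I would handle the low-order case $k=1$, where the canonical basis function $\psi_i$ is harmonic on $E$ because $\Delta \psi_i \in \mathbb{P}_{k-2}(E)$ is trivial, and its trace on $\partial E$ is the piecewise-linear interpolant equal to $1$ at $V_i$ and $0$ at every other vertex. The weak maximum principle yields $\psi_i \geq 0$ in $E$, and the strong form gives $\psi_i > 0$ in the interior, so $d_i = \int_E \psi_i\,dE > 0$. As a supporting (purely algebraic) check, the lower stability bound in (\ref{eq:stability_condition}) applied to the constant test function $v=1 \in \mathbb{P}_k(E)$ — for which the stabilization contribution vanishes — gives $\sum_i d_i = m_h^E(1,1) \geq \beta_* |E| > 0$, ruling out a simultaneous collapse of all entries.

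For $k \geq 2$, the direct argument breaks down: basis functions dual to interior-moment DOFs with non-constant monomials satisfy $\int_E \psi_i\,dE = 0$ by unisolvence, so raw row-sums vanish on those DOFs. This is exactly what the floored weights introduced in (\ref{eq:floored_weights}) repair, enforcing a uniform lower bound $d_i \geq c\,|E|/N^E_{\text{dof}}$ with $c>0$ mesh-independent. Assembly of the local contributions into the global diagonal preserves strict positivity since it is a sum of positive terms, and restriction to free DOFs only removes entries clamped by the Dirichlet condition; hence each free-DOF diagonal entry is strictly positive on the global matrix as well.

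The main obstacle is precisely the higher-order regime: without flooring, row-sum lumping can place genuine zeros on interior-moment rows, so SPD cannot be obtained from positivity of $\int_E \psi_i\,dE$ alone. The lemma therefore rests on the floored-weight construction of (\ref{eq:floored_weights}); once that is invoked, uniform positivity of the diagonal is secured, and the SPD claim collapses to the one-line diagonal-matrix argument sketched above.
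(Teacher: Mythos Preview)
Your proposal is correct and follows essentially the same route as the paper: maximum principle for the harmonic $\psi_i$ when $k=1$, the observation that for $k\ge 2$ unisolvence forces $\int_E\psi_i\,dE=0$ on all DOFs except the constant interior moment (you single out the non-constant interior moments, but vertex and edge DOFs vanish too), and hence the reliance on the floored weights \eqref{eq:floored_weights} to restore strict positivity before the one-line diagonal SPD argument. The only cosmetic difference is that the paper obtains $\sum_i d_i=|E|$ directly from the partition of unity rather than via the stability bound, but this does not affect the argument.
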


\begin{proof}
    The strategy is to prove that $d_i > 0$ by showing that each local row-sum $s^E_i$, and the global sum is positive. The argument uses the partition of unity and the orthogonality of the projection error to constants.

    From the partition of unity on each $E$:
    \begin{equation}
        \nonumber
        \sum \limits^{\numdofs}_{i=1} \psi_i = 1.
    \end{equation}
    Integrating over $E$:
    \begin{equation}
        \nonumber
        \sum \limits^{\numdofs}_{i=1} \integral{E}\psi_i dE = \integral{E} 1 dE = |E| > 0 \Rightarrow \sum \limits^{\numdofs}_{i=1} s^E_i = |E|.
    \end{equation}
    Summing over all $E$:
    \begin{equation}
        \nonumber
        \sum \limits^{\numdofs}_{i=1} d_i = \sum \limits_{E \in \mathcal{T}_h} |E| = |\Omega| > 0.
    \end{equation}
    This shows that the total mass is positive.

    For $k = 1$, the local VEM basis $\psi_i$ on an element $E$ is harmonic, i.e., $\Delta \psi_i = 0$ in $\partial E$ and $\psi = 1$ at vertex $\mathcal{V}_i$, $\psi_i = 0$ at other vertices, and along two incident edges it varies linearly $1$ down to $0$. Hence,
    \begin{equation}
        \nonumber
        0 \leq \psi_i \leq 1 \; \text{on} \;  \partial E,
    \end{equation}
    and the boundary data is not identically constant. By the Maximum Principle (see Theorem \ref{theo:maximum_principle}), the harmonic extension satisfies:
    \begin{equation}
        \nonumber
        0 \leq \psi_i (x) \leq 1, \; \forall x \in E.
    \end{equation}
    Because the boundary data is not constant, $\psi_i$ is not constant. The Strong Maximum Principle (see Theorem \ref{theo:strong_maximum_principle}), then gives the strict interior bounds:
    \begin{equation}
        \nonumber
        0 < \psi_i (x) < 1, \; \forall x \in E,
    \end{equation}
    i.e., $\psi_i$ is strictly positive throughout the interior. Consequently,
    \begin{equation}
        \nonumber
        \int \limits_E \psi_i dE > 0.
    \end{equation}

    For $k \geq 2$, the canonical basis functions may change sign. The constant interior moment is included among the degrees of freedom:
    \begin{equation}
        \nonumber
        \chi^E_{\text{const}}(v) = \frac{1}{|E|} \int \limits_E v \, dE,
    \end{equation}
    and, by unisolvence of the dual basis,
    \begin{equation}
        \nonumber
        \chi^E_{\text{const}}(\psi_i) = \delta_{i,\text{const}}
        \;\Rightarrow\;
        \integral{E} \psi_i \, dE =
        \begin{cases}
            |E|, & i = \text{const interior moment}, \\
            0,   & \text{otherwise}.
        \end{cases}
    \end{equation}
    Therefore, with the row-sum weights $d_i$, the element-wise diagonal has exactly one positive entry and the others are zero when $k \geq 2$. It is positive semi-definite but not uniformly positive, so the lumped mass form cannot satisfy the stability condition given in Lemma \ref{lemma:lumped_mass_stability}. To enforce a uniformly positive diagonal (independent of the number of edges/vertices), the floored weight is adopted as in (\ref{eq:floored_weights}), which guarantees
    \begin{equation}
        \nonumber
        \tilde{d}_i > \delta \frac{|E|}{\numdofs} > 0
    \end{equation}
    for every local degree of freedom. Combined with Theorem \ref{theo:l2_equivalence}, this yields a mesh-independent $L^2$--equivalence for the lumped mass bilinear form. 

    For each global degree of freedom index $i$, there is at least one $i$ for which
    \begin{equation}
        \nonumber
        \integral{E} \psi_i dE > 0. 
    \end{equation}
    Thus, 
    \begin{equation}
        \nonumber
        d_i = \sum \limits_i \integral{E} \psi_i dE > 0.
    \end{equation}

    For $\mathbf{v} \neq \mathbf{0}$, let $v_j \neq 0$. Then, 
    \begin{equation}
        \nonumber
        \mathbf{v}^T \M \mathbf{v} = \sum \limits_i d_i v_i^2 \geq d_j d_j > 0.
    \end{equation}
\end{proof}

The lumped mass bilinear form also satisfies the stability condition.
\begin{lemma}\label{lemma:lumped_mass_stability}
    Let $\M$ be the lumped mass matrix assembled by the local row-sum lumping with floored weights
    \begin{equation}\label{eq:floored_weights}
        \tilde{d}_i = \max \left\{ d_i, \delta \frac{|E|}{\numdofs} \right\},
    \end{equation}
    where $\delta \in (0,1)$ is fixed. Assume the standard VEM mesh regularity as presented in Section \ref{sec:introduction} (star-shaped with chunkiness $\gamma$), and the number of edges per element is uniformly bounded by the maximum number of vertices in an element of the mesh $N_{\max}$. Then, there exist mesh-independent constants $\hat{\beta}_*, \hat{\beta}^*>0$ depending only on $d$, $k$, $\gamma$, $\delta$, and $N_{\max}$, such that for all $v_h \in V_h$:
    \begin{equation}
    \nonumber
        \hat{\beta_*} \| v_h \|^2_{\lebesguespace{\Omega}} \leq \langle v_h, v_h \rangle^{\text{lump}}_h \leq \hat{\beta}^* \| v_h \|^2_{\lebesguespace{\Omega}},
    \end{equation} 
    where
    \begin{equation}
    \nonumber
        \langle v_h, v_h \rangle^{\text{lump}}_h = \sum \limits_{E \in \tau_h} \sum \limits^{\numdofs}_{i=1}\tilde{d}_i \chi_i(v_h)^2.
    \end{equation}
\end{lemma}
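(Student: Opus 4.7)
The plan is to prove the inequality element by element and then sum, reducing everything to two ingredients: a DOF-based $\lebesguespace{E}$ norm equivalence on each polygon, and uniform two-sided bounds on the floored weights $\tilde d_i$. Writing $\langle v_h, v_h \rangle^{\mathrm{lump}}_h = \sum_{E \in \mathcal{T}_h} \sum_i \tilde d_i \chi_i(v_h)^2$ and $\|v_h\|^2_{\lebesguespace{\Omega}} = \sum_{E \in \mathcal{T}_h} \|v_h\|^2_{\lebesguespace{E}}$, it suffices to show that on every element there exist mesh-independent constants $\hat\beta_*, \hat\beta^*$ such that
$$\hat\beta_*\, \|v_h\|^2_{\lebesguespace{E}} \;\le\; \sum_{i=1}^{\numdofs} \tilde d_i\, \chi_i(v_h)^2 \;\le\; \hat\beta^*\, \|v_h\|^2_{\lebesguespace{E}}.$$

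First I would establish the local DOF--$\lebesguespace{E}$ equivalence: there exist $c_1, c_2 > 0$ depending only on $d$, $k$, $\gamma$, and $N_{\max}$ such that for every $v_h \in \localvespace$,
$$c_1\, |E| \sum_i \chi_i(v_h)^2 \;\le\; \|v_h\|^2_{\lebesguespace{E}} \;\le\; c_2\, |E| \sum_i \chi_i(v_h)^2.$$
The argument is an affine rescaling of $E$ to a reference polygon of unit diameter, with Jacobian of order $|E| \sim h_E^d$, followed by a compactness argument over the finite family of topological types of star-shaped reference polygons with at most $N_{\max}$ vertices and chunkiness at least $\gamma$. On each such reference shape the virtual element space is finite dimensional, the unisolvence $\chi_j(\psi_i) = \delta_{ij}$ renders the DOF $\ell^2$ seminorm and the $\lebesguespace{\hat E}$ norm equivalent, and continuity in the polygon shape together with compactness promotes the equivalence constants to uniform bounds depending only on the listed parameters.

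Next I would obtain a two-sided control of the floored weights. The lower bound $\tilde d_i \ge \delta |E|/\numdofs$ is immediate from \eqref{eq:floored_weights}. For the upper bound, Cauchy-Schwarz gives $d_i = \integral{E}\psi_i\,dE \le |E|^{1/2}\|\psi_i\|_{\lebesguespace{E}}$, and the upper half of the DOF equivalence applied to the unit DOF vector $\mathbf{e}_i$ yields $\|\psi_i\|^2_{\lebesguespace{E}} \le c_2\, |E|$, hence $d_i \le \sqrt{c_2}\,|E|$ and consequently $\tilde d_i \le C_\star\, |E|$ with $C_\star = \max\{\sqrt{c_2},\delta\}$. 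Combining with the DOF equivalence gives
$$\sum_i \tilde d_i\, \chi_i(v_h)^2 \;\ge\; \frac{\delta\,|E|}{\numdofs}\sum_i \chi_i(v_h)^2 \;\ge\; \frac{\delta}{c_2\,\numdofs}\,\|v_h\|^2_{\lebesguespace{E}},$$
$$\sum_i \tilde d_i\, \chi_i(v_h)^2 \;\le\; C_\star\,|E|\sum_i \chi_i(v_h)^2 \;\le\; \frac{C_\star}{c_1}\,\|v_h\|^2_{\lebesguespace{E}}.$$
Using \eqref{eq:ndofs} to bound $\numdofs$ uniformly in terms of $N_{\max}$ and $k$, these yield global constants $\hat\beta_*, \hat\beta^*$ with the claimed dependence, and summation over $E$ closes the argument.

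The main obstacle is the local DOF--$\lebesguespace{E}$ equivalence itself, because VEM basis functions are defined implicitly and cannot be pulled back by a single reference map as in standard FEM on simplices; the equivalence constants could a priori degrade with the number of edges. The uniform bound $N_{\max}$ is precisely what makes the compactness-over-topologies argument work, ensuring the constants do not deteriorate. Once this equivalence is secured, the role of the floor in \eqref{eq:floored_weights} is exactly to rescue the DOFs whose raw row-sum $d_i$ vanishes (as analyzed in Lemma \ref{lemma:spd} for $k \ge 2$), guaranteeing that $\hat\beta_*$ stays strictly positive and mesh independent.
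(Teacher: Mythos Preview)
Your proposal is correct and follows essentially the same route as the paper: both argue element by element, invoke the DOF--$L^2$ norm equivalence (the paper cites this as a known result rather than sketching the compactness argument), use the floor $\tilde d_i\ge \delta|E|/\numdofs$ for the lower bound, and bound $|d_i|$ via Cauchy--Schwarz plus the DOF equivalence applied to $\psi_i$ for the upper bound. Your upper-bound step is in fact slightly tighter, since you control $\max_i\tilde d_i\le C_\star|E|$ directly, whereas the paper takes the coarser route $\sum_i\tilde d_i\,\chi_i(v_h)^2\le\bigl(\sum_i\tilde d_i\bigr)\bigl(\sum_i\chi_i(v_h)^2\bigr)$ and then bounds $\sum_i\tilde d_i$.
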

\begin{proof}
    The proof is carried out locally on each element $E \in \mathcal{T}_h$, and the global bound follow by the summation over all elements, leveraging the mesh regularity to ensure the constants are uniform.

    The lumped form is positive due to $\tilde{d}_i > 0$, and the proof relies on Theorem \ref{theo:l2_equivalence}:
    \begin{equation}
        \nonumber
        C_*(d,k,\gamma) h_E^{-d} \| v_h \|_{\lebesguespace{E}}^2 \leq \sum \limits^{\numdofs}_{i=1} \chi (v_h)^2 \leq C^*(d,k,\gamma)h_E^{-d}\| v_h \|^2_{\lebesguespace{E}},
    \end{equation}
    where the constants $C_*, C^*>0$ are independent of the number of faces.

    Since $\tilde{d}_i \leq \delta |E|/\numdofs$, and
    \begin{equation}\label{eq:aux_upper_bound}
        c(\gamma) h_E^d \leq |E| \leq C(\gamma) h_E^d
    \end{equation}
    where $c, C > 0$ depends only on $\gamma$ due to the star-shaped constraints, it holds that:
    \begin{equation}
        \nonumber
        \langle v_h, v_h\rangle^{\text{lump}}_{h,E} \geq \min \limits_i \tilde{d}_i \sum \limits_i \geq \delta \frac{|E|}{\numdofs} C_*(d,k,\gamma) h_E^{-d} \| v_h \|^2_{\lebesguespace{E}} \geq \frac{\delta c(\gamma) C_* (d,k,\gamma)}{N_{\max}} \| v_h \|^2_{\lebesguespace{E}},
    \end{equation}
    yielding the lower bound with 
    \begin{equation}
        \nonumber
        \hat{\beta}_* = \frac{\delta c(\gamma) C_*(d,k,\gamma)}{N_{\max}}.
    \end{equation}

    Let
    \begin{equation}
        \nonumber
        S = \{ i: \; d_i < a \},
    \end{equation}
    with $S^C$ its complement and
    \begin{equation}
        \nonumber
        a = \delta \frac{|E|}{\numdofs},
    \end{equation}
    and let 
    \begin{equation}
        \nonumber
        \tilde{d}_i = \max \{ d_i, a \} = \begin{cases}
            d_i, \; i\notin S \ \\
            a, \; i \in S
        \end{cases}.
    \end{equation}
    Then, splitting the sum over $S$ and $S^C$:
    \begin{equation}
        \nonumber
        \sum \limits_i \tilde{d}_i = \sum \limits_{i \in S^C} \tilde{d}_i + \sum \limits_{i \in S} \tilde{d}_i = \sum \limits_{i \in S^C}d_i + \sum \limits_{i \in S}a.
    \end{equation}
    It holds that:
    \begin{equation}
        \nonumber
        \sum \limits_i d_i = \sum \limits_i - \sum \limits_{i \in S}d_i,
    \end{equation}
    so,
    \begin{equation}
        \nonumber
        \sum \limits_i \tilde{d}_i = \left( \sum \limits_i d_i - \sum \limits_{i \in S} d_i \right) + \sum \limits_{i \in S} a = \sum \limits_i d_i + \sum \limits_{i \in S} (a-d_i).
    \end{equation}
    The partition of unity gives that:
    \begin{equation}
        \nonumber
        \sum \limits_i d_i = |E|.
    \end{equation}
    Thus,
    \begin{equation}
        \nonumber
        \sum \limits_i \tilde{d}_i = |E| + |S|a - \sum \limits_{i \in S} d_i \Rightarrow \sum \limits_i \tilde{d}_i \leq |E| + |S|a + \sum \limits_{i \in S} |d_i|.
    \end{equation}
    With $\numdofs \leq N_{\max}$, it holds that $|S| < N_{\max}$, and:
    \begin{equation}
        \nonumber
        \sum \limits_i \tilde{d}_i \leq |E| + N_{\max} a + N_{\max} \max \limits_i |d_i|.
    \end{equation}

    By the definition of $d_i$:
    \begin{equation}
        \nonumber
        \begin{split}
            d_i = \integral{E} \psi dE = \langle \psi_i, 1 \rangle_E \Rightarrow |d_i| = \left|\langle \psi_i, 1 \rangle_E\right| \leq \| \psi_i \|_{\lebesguespace{E}} \| 1 \|_{\lebesguespace{E}} = \\
            \| \psi_i\|_{\lebesguespace{E}} \left( \integral{E} 1^2 dE \right)^{1/2} = \| \psi_i \|_{\lebesguespace{E}} |E|^{1/2}.
        \end{split}
    \end{equation}
    Therefore,
    \begin{equation}
        \nonumber
        |d_i|  \leq \| \psi_i \|_{\lebesguespace{E}}|E|^{1/2}.
    \end{equation}
    By Theorem \ref{theo:l2_equivalence} applied to $v_h = \psi_i$:
    \begin{equation}
        \nonumber
        C_*(d,k,\gamma) h_E^{-d}\| \psi \|_{\lebesguespace{E}} \leq \sum \limits_j \chi_j(\psi_i)^2 = 1 \Rightarrow \| \psi_i \|_{\lebesguespace{E}} \leq C_*(d,k,\gamma)^{-1/2} h_E^{d}. 
    \end{equation}
    Since $|E| \leq C(\gamma)h_E^d$, it follows that:
    \begin{equation}
        \nonumber
        \| \psi_i \|_{\lebesguespace{E}} \leq C_*(d,k,\gamma)^{-1/2}(C(\gamma))|E|^{1/2}.
    \end{equation}
    Thus,
    \begin{equation}
        \nonumber
        \max \limits_i |d_i| \leq C_d(k,\gamma) |E|,
    \end{equation}
    with
    \begin{equation}
        \nonumber
        C_d(k, \gamma) = C_*(d,k,\gamma)^{-1/2} C(\gamma)^{1/2}.
    \end{equation}
    Hence, 
    \begin{equation}
        \nonumber
        \sum \limits_i \tilde{d}_i \leq |E| + N_{\max} \delta \frac{|E|}{\numdofs} \numdofs + N_{\max} C_d(k, \gamma) |E| \leq(1+ \delta + N_{\max}C_d).
    \end{equation}
    So, it is true that:
    \begin{equation}
        \nonumber
        \begin{split}
            \langle v_h, v_h \rangle^{\text{lump}}_{h,E} = \sum \limits_i \tilde{d}_i \chi_i (v_h)^2 &\leq \\
            \left( \sum \limits_i \tilde{d}_i \right) \left( \sum \limits_i \chi_i (v_h)^2 \right) &\leq (1+ \delta + N_{\max}C_d) C^*(d,k,\gamma) h_E^{-d} \|v_h\|_{\lebesguespace{E}}.
        \end{split}
    \end{equation}
    Using the upper bound of (\ref{eq:aux_upper_bound}):
    \begin{equation}
        \nonumber
        \langle v_h, v_h \rangle^{\text{lump}}_{h,E} \leq (1+ \delta + N_{\max}C_d) C(\gamma) C^*(d,k,\gamma) \| v_h \|^2_{\lebesguespace{E}},
    \end{equation}
    yielding the upper bound:
    \begin{equation}
        \nonumber
        \hat{\beta}^* = (1+ \delta + N_{\max}C_d) C(\gamma) C^*(d,k,\gamma).
    \end{equation}
    Note that the constants $C_*, C^*$ of Theorem \ref{theo:l2_equivalence} depends only on $d$, $k$ and $\gamma$, and the volume-diameter constants $c(\gamma), C(\gamma)$ ensure uniformity. 
\end{proof}

\subsection{Compute the lumped mass matrix}
\label{sec:compute_lumped_mass_matrix}
The computation of the lumped local mass matrix $\hat{\mathbf{M}}^E$ without the explicit knowledge of the basis functions leverages the degrees of freedom of the virtual element space, the $L^2$--projection properties, and a small linear system solvable per element. Let $\{ p_\alpha \}^{N_k}_{\alpha=1}$ be a basis for $\P{E}$. The chosen form for each $p_\alpha$ are the scaled monomials. For the two-dimensional case those monomials are given by:
\begin{equation}
    \nonumber
    p_\alpha (x,y) = \left( \frac{x - x_c}{h_E} \right)^{\alpha_1} \left( \frac{y - y_c}{h_E} \right)^{\alpha_2},
\end{equation}
with $\alpha_1, \alpha_2 \in \mathbb{N}$, $\alpha_1 + \alpha_2 \leq k$, and where $(x_c, y_c)$ is the centroid of $E$, and $h_E$ is the polygonal diameter of element $E$.

It is necessary to compute the DOF matrix $\mathbf{D}^E \in \mathbb{R}^{N_k \times \numdofs}$ such that
\begin{equation}
    \nonumber
    D^E_{\alpha i} = \chi^E_i(p_\alpha).
\end{equation}
Concretely, by DOF type:
\begin{enumerate}
    \item Vertex DOF: for each vertex $V_i$:
        \begin{equation}
    \nonumber
            D^E_{\alpha V_i} = \chi^E_{V_i}(p_\alpha) = p_\alpha (V_i), \nonumber    
        \end{equation}
    \item Edge moment DOF: when $k \geq 2$, for $(e_i, j)$, where $e_i$ regards to the $i$--th edge of $E$:
        \begin{equation}
    \nonumber
            D^E_{\alpha (e_i, j)} = \chi^E_{e_i, j}(p_\alpha) = \frac{1}{|e_i|} \int \limits_{e_i} p_\alpha \hat{L}_j^{(e_i)}ds = \frac{1}{2} \int \limits^{1}_{-1} p_\alpha (\gamma_{e_i}(t))\hat{L}_j(t)dt, \nonumber
        \end{equation}
    \item Interior moment DOF: for multi-index $\boldsymbol{\beta}$ with $\boldsymbol{\beta} \leq k - 2$ when $k \geq 2$:
        \begin{equation}
    \nonumber
            D^E_{\boldsymbol{\alpha}, \boldsymbol{\beta}} = \chi^E_{\boldsymbol{\beta}}(p_{\boldsymbol{\alpha}}) = \frac{1}{|E|} \int \limits_E p_{\boldsymbol{\alpha}} m_{\boldsymbol{\beta}} \, dE.
        \end{equation}
\end{enumerate}
All these entries are exactly computable for polynomials $p_\alpha$: vertex values are point evaluations, edge entries are one-dimensional integrals of polynomials (exact with a finite number of Gauss points, or via orthonormality of $\hat{L}_j$), and interior entries are polygonal moments (obtainable by Green's Theorem or exact polygonal quadrature). 

As in the standard formulation for the full mass matrix (see, for example, \cite{vacca2015parabolic}), it is necessary to build the polynomial part of the mass term as in (\ref{eq:mass_gram}):
\begin{equation}\label{eq:lumped_mass_gram}
    G^{E,\mathbf{M}}_{\alpha \beta} = \integral{E} p_\alpha p_\beta dE.
\end{equation}
The constant vector $\mathbf{c}^E \in \mathbb{R}^{N_k}$ is built as:
\begin{equation}\label{eq:lumped_constant}
    c^E_\alpha = \integral{E} p_\alpha dE.
\end{equation}
Both (\ref{eq:lumped_mass_gram}) and (\ref{eq:lumped_constant}) are polynomial integration and can be computed exactly. The computation cost to solve the system
\begin{equation}
    \nonumber
    \mathbf{G}^E \mathbf{w}^E = \mathbf{c}^E
\end{equation}
is $\mathcal{O}(N_k^3)$ and becomes cheap as $N_k << \numdofs$. Finally, the row-sum vector $\mathbf{s}^E$ is computed as:
\begin{equation}
    \nonumber
    \mathbf{s}^E = \left( \mathbf{D}^E \right)^T \mathbf{w}^E.
\end{equation}

\section{Time discretization}
\label{sec:estimates}

This section develops explicit time integration for the semi-discrete VEM system with lumped mass. The focus is on strong stability-preserving Runge-Kutta (SSP-RK) schemes that inherit the forward Euler nonexpansiveness in a convex functional, here specialized to the discrete energy norm induced by $\M$. The classical Shu-Osher representation is regarded as a theoretical device; the practical framework adopted follows the GLM/monotonicity characterization in \cite{izzo2022ssp}, which directly yields implementable Butcher coefficients and a stepsize restriction $\Delta t \le C_{\mathrm{SSP}}\Delta t_{\mathrm{FE}}$ once $\Delta t_{\mathrm{FE}}$ is known.

The presentation proceeds as follows. First, the SSP notion is recalled for the semi-discrete system $\dot{\u}=\mathbf{G}(\u)$ and the FE-based stepsize bound is linked to the spectral quantity $\lambda_{\max}\!\big((\M)^{-1}\K\big)$. Next, the specific integrators employed—third-order SSP-RK3 and fourth-order SSP-RK(5,4) from \cite{izzo2022ssp}—are indicated (coefficients and implementation details are available in that reference). Finally, energy stability estimates are derived for homogeneous and nonhomogeneous problems, using $L^2$-DOF equivalence and inverse inequalities together with a mesh-robust spectral upper bound for $(\hat{\mathbf{M}}_h)^{-1}\mathbf{K}_h$, which yields a CFL restriction of order $h^2$ with constants independent of the number of faces/edges.

\subsection{Strong stability preserving Runge-Kutta methods}
\label{sec:runge_kutta}

In this section, $\mathbf{G}$ denotes the semi-discrete right-hand side induced by the VEM discretization with lumped mass. For the general (nonhomogeneous) case:
\begin{equation}
    \nonumber
  \mathbf{G}(\u) \;=\; -\,\M^{-1}\K\,\u \;+\; \M^{-1}\,\mathbf{f}_h,
\end{equation}
where $\u$ is the vector of VEM coefficients, $\M$ is the (diagonal) lumped mass matrix, $\K$ is the global stiffness matrix, and $\mathbf{f}_h$ is the load vector (assembled using the $L^2$-projection).

In the homogeneous case ($\mathbf{f}_h=\mathbf{0}$):
\begin{equation}
    \nonumber
  \mathbf{G}(\u) \;=\; -\,\M^{-1}\K\,\u.
\end{equation}

Strong stability-preserving Runge-Kutta (SSP-RK) methods are high-order time integrators designed to retain “strong stability” properties (e.g., monotonicity, contractivity, boundedness in a chosen norm or convex functional) that forward Euler guarantees for semi-discrete systems
\begin{equation}
    \nonumber
    \dot{\u} \;=\; \mathbf{G}(\u),
\end{equation}
under an appropriate time-step restriction. Let $\Phi(\cdot)$ denote a convex functional (for instance, total variation, $\|\cdot\|_\infty$, or the discrete energy norm $\|\cdot\|_{\M}$). If forward Euler (FE)
\begin{equation}
    \nonumber
    \u^{n+1}_{\mathrm{FE}} \;=\; \u^{n} + \Delta t\,\mathbf{G}(\u^{n})
\end{equation}
is nonexpansive in $\Phi$, i.e.,
\begin{equation}
    \nonumber
    \Phi\!\big(\u^{n} + \Delta t\,\mathbf{G}(\u^{n})\big)\ \le\ \Phi(\u^{n})
    \quad \text{for all } 0<\Delta t\le \Delta t_{\mathrm{FE}},
\end{equation}
then an SSP-RK method, which admits a Shu-Osher (convex-combination-of-FE) representation introduced in \cite{shu1988eno}, inherits this nonexpansiveness by convexity. Concretely, there exists a method-dependent SSP coefficient $C_{\mathrm{SSP}}>0$ such that the SSP-RK update is nonexpansive for
\begin{equation}
    \nonumber
    0<\Delta t\ \le\ C_{\mathrm{SSP}}\,\Delta t_{\mathrm{FE}}.
\end{equation}
This provides a pathway to construct high-order time integrators that preserve the strong-stability features of FE while maintaining high accuracy in smooth regimes. In the next section, this framework is specialized to the VEM semi-discretization with lumped mass, using $\Phi(\u)=\|\u\|_{\M}$ and the underlying spectral bounds.

The classical Shu-Osher representation expresses SSP-RK schemes as convex combinations of forward Euler (FE) steps and explains nonexpansiveness in a convex functional $\Phi(\cdot)$ by convexity. While theoretically insightful, that representation is not implementation-ready in general: the convex coefficients are nonunique, higher-order or many-stage schemes can force negative or modified weights and even require down winding operators $\tilde{L}$, and the construction does not directly yield a unique Butcher tableau to implement. A modern alternative that is directly implementable is the GLM/monotonicity framework reviewed in \cite{izzo2022ssp}, which recasts Runge-Kutta methods as a special case of general linear methods and characterizes SSP via a matrix positivity condition. In that setting, with suitable $S$ and $T$ built from the Runge-Kutta coefficients, the SSP property holds if and only if there exists $\gamma>0$ such that
\begin{equation}
    \nonumber
    (\mathbf{I}+\gamma \mathbf{T})^{-1}\,[\,\mathbf{S} \;\mid\; \gamma \mathbf{T}\,]\ \ge\ 0 \; \text{(componentwise)},
\end{equation}
and the associated SSP coefficient is defined by
\begin{equation}
    \nonumber
    C_{\mathrm{SSP}}\;=\;\sup\big\{\gamma>0:\ (\mathbf{I}+\gamma \mathbf{T})^{-1}\,[\,\mathbf{S} \;\mid\; \gamma \mathbf{T}\,]\ \ge\ 0\big\},
\end{equation}
where $\mathbf{I}$ is the identity operator. This yields a clean constrained optimization (positivity plus Runge-Kutta order conditions) whose solution provides directly the Butcher coefficients $(\mathbf{A},\mathbf{b},\mathbf{c})$ with guaranteed SSP radius, avoiding ad-hoc convex decomposition.

In this work, two explicit SSP-RK schemes from \cite{izzo2022ssp} are adopted: the optimal third-order three-stage SSP-RK3 and a fourth-order five-stage SSP-RK(5,4). Their coefficients and detailed constructions are reported in \cite{izzo2022ssp}; implementation follows those tables. The associated SSP timestep restriction reads
\begin{equation}
    \nonumber
    0<\Delta t\ \le\ C_{\mathrm{SSP}}\,\Delta t_{\mathrm{FE}},
    \qquad
    \Delta t_{\mathrm{FE}} \;=\; \frac{2}{\lambda_{\max}\!\big((\M)^{-1}\K\big)},
\end{equation}
so, with $\Phi(\u)=\|\u\|_{\M}$ and the spectral bounds proved below, the resulting explicit integrators preserve the discrete strong-stability features inherited from FE for the VEM semi-discretization with lumped mass.

\subsection{Energy stability estimates for the explicit time-step schemes with VEM space-discretization with lumped mass matrix}
\label{sec:stability_estimates}

All results in this section are stated for the scalar problem. Since the diffusion operator and the VEM projectors act componentwise, the vector-valued case decouples into identical scalar problems (the global mass and stiffness matrices are block diagonal with repeated scalar blocks), and the same spectral bounds (with the same constants) hold componentwise. Let
\begin{equation}
    \nonumber
    \lambda_{\max} = \max_{v_h \in V_h \setminus \{0\}} \frac{a_h(v_h, v_h)}{\langle v_h, v_h \rangle_h}.
\end{equation}

The next theorem gives a verifiable $\Delta t$ condition that makes the explicit VEM scheme with lumped mass formulation energy stable. Also, it aligns the discrete behavior with the PDE diffusion physics, and yields standard $\Delta t \approx \mathcal{O}(h^2)$ rule used in practice.

\begin{theo}[Homogeneous case]\label{theo:homogeneous_case}
    Consider the forward Euler discretization of the homogeneous semi-discrete VEM system:
    \begin{equation}
    \nonumber
        \u^{n+1} = \u^n - \Delta t \M^{-1}\K \u^n.
    \end{equation}
    Then, the discrete energy satisfies:
    \begin{equation}
    \nonumber
        \| \u^{n+1} \|_{\M} \leq \| \u^n \|_{\M}, \; \forall n,
    \end{equation}
    with the discrete norm given by:
    \begin{equation}
    \nonumber
        \| \mathbf{v} \|_{\M} = \mathbf{v}^T \M \mathbf{v}= \sum \limits^{N_{\text{dof}}}_{i=1} d_i v_i^2, \; \mathbf{v} \neq \mathbf{0},
    \end{equation}
    and
    \begin{equation}
    \nonumber
        \M = \text{diag} (d_1, ..., d_{N_{\text{dof}}}),
    \end{equation}
    provided
    \begin{equation}
    \nonumber
        \Delta t \leq \frac{2}{\lambda_{\max}\left(\M^{-1}\K\right)},
    \end{equation}
    where $N_{\text{dof}}$ is the total number of degrees of freedom.
\end{theo}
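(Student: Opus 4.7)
The plan is to reduce the energy-decay inequality to a scalar spectral condition on a symmetrized iteration matrix. Recall that $\M$ is symmetric positive definite on the free DOFs by Lemma \ref{lemma:spd} (the floored weights $\tilde{d}_i>0$), so $\M^{1/2}$ and $\M^{-1/2}$ are well-defined; while $\K$ is symmetric positive semi-definite as the Gram matrix of the symmetric, coercive bilinear form $a_h(\cdot,\cdot)$ of Section \ref{sec:discrete_bilinear}. Set $A := \M^{-1/2}\K\M^{-1/2}$, which is real symmetric positive semi-definite and, by the similarity $A = \M^{1/2}(\M^{-1}\K)\M^{-1/2}$, shares its spectrum with $\M^{-1}\K$, so $\sigma(A)\subset[0,\lambda_{\max}(\M^{-1}\K)]$.

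The main step is the change of variable $\mathbf{y}^n := \M^{1/2}\u^n$, which gives the isometry $\|\u^n\|_{\M}^2 = \|\mathbf{y}^n\|_2^2$ and recasts the forward Euler step as $\mathbf{y}^{n+1}=(\mathbf{I}-\Delta t\,A)\,\mathbf{y}^n$. Since $A$ is symmetric, a direct expansion yields
\begin{equation}
\nonumber
\|\u^{n+1}\|_{\M}^2 \;=\; (\mathbf{y}^n)^T(\mathbf{I}-\Delta t\,A)^2\mathbf{y}^n \;\le\; \max_{\lambda\in\sigma(A)}(1-\Delta t\,\lambda)^2 \,\|\u^n\|_{\M}^2.
\end{equation}
Equivalently, one can avoid matrix square roots by expanding directly: $\|\u^{n+1}\|_{\M}^2 = \|\u^n\|_{\M}^2 - 2\Delta t\,(\u^n)^T\K\u^n + \Delta t^2\,(\K\u^n)^T\M^{-1}(\K\u^n)$, and then bounding the cross-term by the generalized Rayleigh inequality $(\K\u^n)^T\M^{-1}(\K\u^n)\le \lambda_{\max}(\M^{-1}\K)\,(\u^n)^T\K\u^n$. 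Both routes produce the same scalar condition.

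To conclude, I would require $(1-\Delta t\,\lambda)^2\le 1$ for every $\lambda\in\sigma(A)$. Since $\lambda\ge 0$, this is equivalent to $\Delta t\,\lambda\le 2$, which reduces to the stated CFL $\Delta t\le 2/\lambda_{\max}(\M^{-1}\K)$, and delivers $\|\u^{n+1}\|_{\M}\le\|\u^n\|_{\M}$ for every $n$. The main delicate point is not the spectral manipulation itself but confirming that $\K$ is genuinely symmetric positive semi-definite on the free DOFs; this is inherited from the symmetry of the continuous gradient form $a^E(\cdot,\cdot)$, the symmetric stabilizer $S^E(\cdot,\cdot)$ in the consistency-stability split \eqref{eq:discrete_energy_form}, and the coercivity bound \eqref{eq:aux_ineq_energy_stab}. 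Once these are in hand, the remainder is a textbook energy argument for symmetric linear iterations.
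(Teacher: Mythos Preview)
Your proposal is correct. Your primary route---symmetrizing via $A=\M^{-1/2}\K\,\M^{-1/2}$ and reducing to $\|\mathbf{y}^{n+1}\|_2\le\max_{\lambda\in\sigma(A)}|1-\Delta t\,\lambda|\,\|\mathbf{y}^n\|_2$---is a cleaner packaging than the paper's argument, which expands $\|\u^{n+1}\|_{\M}^2$ directly, obtains the difference $-2\Delta t\,(\u^n)^T\K\u^n+\Delta t^2(\u^n)^T\K\M^{-1}\K\u^n$, and then eigendecomposes $\K=\mathbf{Q}\boldsymbol{\Lambda}\mathbf{Q}^T$ to bound the ratio $(\u^n)^T\K\M^{-1}\K\u^n\big/(\u^n)^T\K\u^n$ by $\lambda_{\max}(\M^{-1}\K)$. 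The substance is the same spectral condition $\Delta t\,\lambda\le 2$; your symmetrization simply makes the contraction of $I-\Delta t A$ in the Euclidean norm immediate without a separate Rayleigh-quotient step, and the alternative direct expansion you sketch (with the generalized Rayleigh inequality $(\K\u^n)^T\M^{-1}(\K\u^n)\le\lambda_{\max}(\M^{-1}\K)\,(\u^n)^T\K\u^n$) is precisely the paper's route stated more compactly.
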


\begin{proof}
    See Appendix \ref{sec:proof_homogeneous}.
\end{proof}

The next theorem provides a discrete energy estimate for the nonhomogeneous semi-discrete VEM system advanced by forward Euler. Under the diffusion-type CFL condition $\Delta t \le 2/\lambda_{\max}\!\big(\M^{-1}\K\big)$, the $\M$-energy at step $n+1$ is bounded by $(1+\Delta t)$ times the previous energy plus load-dependent terms measured in the dual norm $\|\cdot\|_{\M^{-1}}$. The inequality shows that the dissipative action of $\K$ (controlled through the spectral bound) counteracts growth, while the forcing contributes linearly and quadratically in $\Delta t$. In the homogeneous limit ($\mathbf{f}_h^n=\mathbf{0}$), the estimate reduces to non-increase of the discrete energy, matching the unforced result.

\begin{theo}[Nonhomogeneous case]\label{theo:non_homogeneous_case}
    Consider the forward Euler discretization of the non-homogenoeus semi-discrete lumped VEM system:
    \begin{equation}
    \nonumber
        \u^{n+1}  = \u^n - \Delta t \M^{-1}\K \u^n + \Delta t \M^{-1} \mathbf{f}_h^n.
    \end{equation}
    Then, the discrete energy satisfies
    \begin{equation}
    \nonumber
        \| \u^{n+1} \|^2_{\M} \leq (1 + \Delta t) \| \u^n \|^2_{\M} + (\Delta t + \Delta t^2)\|\mathbf{f}_h^n\|_{\M^{-1}}
    \end{equation}
    provided
    \begin{equation}
    \nonumber
        \Delta t \leq \frac{2}{\lambda_{\max}\left(\M^{-1}\K\right)}.
    \end{equation}
\end{theo}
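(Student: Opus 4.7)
The plan is to reduce the nonhomogeneous update to the already-analyzed homogeneous one plus an explicit forcing correction, expand the resulting $\M$-energy, and close the estimate with a single weighted Young inequality. To that end, I would set $\mathbf{v}^n := \u^n - \Delta t\,\M^{-1}\K\,\u^n$ so that $\u^{n+1} = \mathbf{v}^n + \Delta t\,\M^{-1}\mathbf{f}_h^n$; by construction, $\mathbf{v}^n$ is exactly the forward-Euler iterate of the homogeneous system treated in Theorem~\ref{theo:homogeneous_case}, which is where the CFL hypothesis will enter.

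Next, I would exploit the bilinearity of the $\M$-inner product (well defined by Lemma~\ref{lemma:spd}) to expand
\begin{equation}
\nonumber
\|\u^{n+1}\|^2_{\M} \;=\; \|\mathbf{v}^n\|^2_{\M} \;+\; 2\Delta t\,(\mathbf{v}^n)^T \mathbf{f}_h^n \;+\; \Delta t^2\,(\mathbf{f}_h^n)^T \M^{-1}\mathbf{f}_h^n,
\end{equation}
using the identities $(\mathbf{v}^n)^T \M (\M^{-1}\mathbf{f}_h^n)=(\mathbf{v}^n)^T\mathbf{f}_h^n$ and $\|\M^{-1}\mathbf{f}_h^n\|^2_{\M}=\|\mathbf{f}_h^n\|^2_{\M^{-1}}$, both of which follow from symmetry of $\M$. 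Then I would apply Theorem~\ref{theo:homogeneous_case} under the stated CFL to bound the first term by $\|\mathbf{v}^n\|^2_{\M}\leq \|\u^n\|^2_{\M}$, and handle the cross term by factoring $(\mathbf{v}^n)^T\mathbf{f}_h^n = (\M^{1/2}\mathbf{v}^n)^T(\M^{-1/2}\mathbf{f}_h^n)$ (using the SPD square root guaranteed by Lemma~\ref{lemma:spd}) and invoking Cauchy--Schwarz followed by Young's inequality with balance parameter equal to one to obtain $2|(\mathbf{v}^n)^T\mathbf{f}_h^n| \leq \|\mathbf{v}^n\|^2_{\M} + \|\mathbf{f}_h^n\|^2_{\M^{-1}}$.

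Collecting the three contributions yields
\begin{equation}
\nonumber
\|\u^{n+1}\|^2_{\M} \;\leq\; (1+\Delta t)\,\|\mathbf{v}^n\|^2_{\M} \;+\; (\Delta t + \Delta t^2)\,\|\mathbf{f}_h^n\|^2_{\M^{-1}} \;\leq\; (1+\Delta t)\,\|\u^n\|^2_{\M} \;+\; (\Delta t + \Delta t^2)\,\|\mathbf{f}_h^n\|^2_{\M^{-1}},
\end{equation}
which is the desired estimate. There is no substantive obstacle: the CFL enters only through Theorem~\ref{theo:homogeneous_case}, and the sole design choice is the Young balance weight, fixed to one so that the coefficients collapse exactly to $1+\Delta t$ and $\Delta t + \Delta t^2$. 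As a sanity check, when $\mathbf{f}_h^n=\mathbf{0}$ both forcing terms vanish and the bound reduces to $(1+\Delta t)\|\u^n\|^2_{\M}$, which can be sharpened back to the nonexpansive estimate of Theorem~\ref{theo:homogeneous_case} simply by not splitting the (now identically zero) cross term.
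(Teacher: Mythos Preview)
Your proof is correct and, in fact, slightly cleaner than the paper's. The paper expands the energy around $\u^n$ by writing $\u^{n+1}=\u^n+\Delta t\,\mathbf{g}^n$ with $\mathbf{g}^n=-\M^{-1}\K\u^n+\M^{-1}\mathbf{f}_h^n$, then separates the ``homogeneous'' quadratic piece $-2\Delta t(\u^n)^T\K\u^n+\Delta t^2(\u^n)^T\K\M^{-1}\K\u^n$ (shown nonpositive via the generalized eigendecomposition under the CFL) from the forcing terms, and applies Cauchy--Schwarz/Young to $2\Delta t(\u^n)^T\mathbf{f}_h^n$. In doing so it must also dispose of the mixed cross term $-2\Delta t^2(\u^n)^T\K\M^{-1}\mathbf{f}_h^n$, which it asserts is ``absorbed into the non-positive part.'' Your decomposition $\u^{n+1}=\mathbf{v}^n+\Delta t\,\M^{-1}\mathbf{f}_h^n$ sidesteps this entirely: by invoking Theorem~\ref{theo:homogeneous_case} as a black box for $\|\mathbf{v}^n\|_{\M}^2\le\|\u^n\|_{\M}^2$ and applying Cauchy--Schwarz/Young to $(\mathbf{v}^n)^T\mathbf{f}_h^n$ rather than $(\u^n)^T\mathbf{f}_h^n$, the troublesome cross term is automatically captured inside $\|\mathbf{v}^n\|_{\M}^2$ and never needs separate treatment. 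The paper's route is more self-contained (it re-derives the spectral argument inline), whereas yours is more modular and avoids the one hand-waved step.
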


\begin{proof}
    See Appendix \ref{sec:proof_nonhomogeneous}.
\end{proof}

The corollary applies a discrete Grönwall argument to the recursion when the load is uniformly bounded in $\|\cdot\|_{\M^{-1}}$. Writing $t_n=n\,\Delta t$, the bound exhibits at most exponential growth in time with constants depending on the forcing magnitude and $\Delta t$, but not on mesh topology (thanks to the mesh-robust CFL). This delivers a global-in-time stability estimate that quantifies the effect of a bounded source on the discrete energy.

\begin{cor}\label{cor:gronwall_consequence}
    If $\| \mathbf{f}_h^n\|_{\M^{-1}} \leq C_F$ uniformly for all $n$ and $C_F > 0$, then for $t_n = \Delta t n$, then
    \begin{equation}
        \nonumber
        \| \u^n \|^2_{\M} \leq \exp (t_n) \| \u^0 \|_{\M} + (1 + \Delta t)(\exp(t_n)-1)C_F^2.
    \end{equation}
\end{cor}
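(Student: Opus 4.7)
The plan is to iterate the one-step bound from Theorem \ref{theo:non_homogeneous_case} and reduce the resulting recursion to a geometric sum, then apply the elementary estimate $(1+x)\le e^x$ to convert the discrete growth factor into the continuous exponential.

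Concretely, writing $a_n:=\|\u^n\|^2_{\M}$ and using the hypothesis $\|\mathbf{f}_h^n\|_{\M^{-1}}\le C_F^2$ uniformly in $n$, Theorem \ref{theo:non_homogeneous_case} gives the scalar recursion
\begin{equation}
\nonumber
a_{n+1}\ \le\ (1+\Delta t)\,a_n\ +\ (\Delta t + \Delta t^2)\,C_F^2
\ =\ (1+\Delta t)\,a_n\ +\ \Delta t(1+\Delta t)\,C_F^2 .
\end{equation}
First I would unroll this inequality by induction, obtaining
\begin{equation}
\nonumber
a_n\ \le\ (1+\Delta t)^n a_0\ +\ \Delta t(1+\Delta t)\,C_F^2\sum_{k=0}^{n-1}(1+\Delta t)^{k}.
\end{equation}
The finite geometric sum evaluates to $\big((1+\Delta t)^n-1\big)/\Delta t$, which cancels the explicit $\Delta t$ factor in front and produces
\begin{equation}
\nonumber
a_n\ \le\ (1+\Delta t)^n a_0\ +\ (1+\Delta t)\bigl((1+\Delta t)^n-1\bigr)C_F^2 .
\end{equation}

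The final step is to invoke $1+\Delta t\le e^{\Delta t}$, which is valid for all $\Delta t\ge 0$, so that $(1+\Delta t)^n\le e^{n\Delta t}=e^{t_n}$. Substituting into the bound above yields exactly the stated estimate. The argument uses only scalar inequalities once the one-step recursion is in hand, so the only real obstacle is the bookkeeping of the geometric sum; the CFL hypothesis enters implicitly through the validity of Theorem \ref{theo:non_homogeneous_case}, and the mesh-independence of the resulting constants follows from the mesh-robust spectral bound that underpins $\Delta t_{\mathrm{FE}}$.
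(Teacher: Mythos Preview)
Your argument is correct and follows essentially the same route as the paper: start from the one-step recursion of Theorem~\ref{theo:non_homogeneous_case}, unroll it into a geometric sum, evaluate that sum, and replace $(1+\Delta t)^n$ by $e^{t_n}$. Your final step is in fact cleaner than the paper's, which invokes $(1+\Delta t)^n\approx e^{t_n}$ ``for small $\Delta t$'' rather than the rigorous inequality $1+\Delta t\le e^{\Delta t}$; one small slip is that you wrote the hypothesis as $\|\mathbf{f}_h^n\|_{\M^{-1}}\le C_F^2$ where the statement says $\le C_F$, but this is a notational mismatch inherited from the paper's own inconsistencies in Theorem~\ref{theo:non_homogeneous_case} and does not affect the logic.
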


\begin{proof}
    See Appendix \ref{sec:proof_gronwall}.
\end{proof}

To the best of our knowledge, the next result is the first proof, in the VEM setting for parabolic problems, of an explicit-time stability/CFL condition based on a lumped mass matrix, including a bound $\lambda_{\max}\!\le C_{\text{VEM}}/h^2$ under standard VEM mesh regularity and stabilization, with constants independent of the number of faces/edges. The analysis establishes an energy stability condition for forward Euler and, via standard SSP theory, for Shu--Osher/SSP Runge--Kutta methods. The central object is the generalized eigenvalue $\lambda_{\max}\big((\hat{\mathbf{M}}_h)^{-1}\mathbf{K}_h\big)$; the diffusion-type estimate above holds with $C_{\mathrm{VEM}}$ depending only on the polynomial degree and the mesh chunkiness parameter, and independent of the number of faces/edges per element. This yields a CFL restriction $\Delta t \le c\, h^2$, rendering mass lumping a provably stable and mesh-robust choice for explicit diffusion solvers on general polytopal meshes.

\begin{theo}[Spectral upper bound for the lumped mass operator]\label{theo:lumped_spectral_bound}
Let $V_{h,k}\subset H^1_0(\Omega)$ be the global conforming VEM space with basis $\{\psi_i\}_{i=1}^N$ and let $a_h(\cdot,\cdot)$ be the global stiffness form. Let $\hat{\mathbf{M}}_h$ be the global lumped mass matrix assembled from the local row-sum construction, and let $\hat m_h(\cdot,\cdot)$ denote the bilinear form induced by $\hat{\mathbf{M}}_h$, i.e., $\hat m_h(u_h,v_h)=\mathbf{u}^T\hat{\mathbf{M}}_h\mathbf{v}$ for coefficient vectors $\mathbf{u},\mathbf{v}$. Assume the mesh is shape regular follows the regularity conditions so that the following mesh-independent bounds hold:
\begin{enumerate}
    \item[$(i)$] ($L^2$-equivalence for the lumped mass) There exists $\hat{\beta}_*>0$ such that
    \begin{equation}
    \nonumber
        \hat m_h(v_h,v_h) \;\ge\; \hat{\beta}_*\, \| v_h \|^2_{L^2(\Omega)} \qquad \forall\, v_h\in V_{h,k}.
    \end{equation}
    \item[$(ii)$] (Inverse inequality) There exists $C_{\mathrm{inv}}>0$ such that
    \begin{equation}
    \nonumber
        | v_h |_{H^1(\Omega)} \;\le\; C_{\mathrm{inv}}\, h^{-1}\, \| v_h \|_{L^2(\Omega)} \qquad \forall\, v_h\in V_{h,k}.
    \end{equation}
\end{enumerate}
Then the largest generalized eigenvalue of the pair $(\mathbf{K}_h,\hat{\mathbf{M}}_h)$, equivalently
\begin{equation}
    \nonumber
    \lambda_{\max}\!\big( (\hat{\mathbf{M}}_h)^{-1}\mathbf{K}_h \big)
    \,=\, \max_{v_h\in V_{h,k}\setminus\{0\}} \frac{a_h(v_h,v_h)}{\hat m_h(v_h,v_h)}
\end{equation}
satisfies the diffusion-type bound
\begin{equation}
    \nonumber
    \lambda_{\max}\!\big( (\hat{\mathbf{M}}_h)^{-1}\mathbf{K}_h \big) \;\le\; \frac{C_{\mathrm{inv}}^2}{\hat{\beta}_*}\, \frac{1}{h^{2}}\,.
\end{equation}
\end{theo}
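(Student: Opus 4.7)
The plan is to express $\lambda_{\max}$ through its variational characterization as a Rayleigh quotient and then reduce the proof to a chain of three scalar estimates, each one already available from earlier in the paper or assumed in the hypotheses of the theorem. Writing
\begin{equation}
    \nonumber
    \lambda_{\max}\!\big((\M)^{-1}\K\big) \;=\; \max_{v_h\in V_{h,k}\setminus\{0\}} \frac{a_h(v_h,v_h)}{\hat m_h(v_h,v_h)},
\end{equation}
it suffices to bound the numerator from above by $\|v_h\|_{L^2(\Omega)}^2$ multiplied by the correct powers of $h$, and to bound the denominator from below by the same $L^2$-norm with a mesh-independent constant.

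The first step of the chain is to control the discrete energy by the $H^1$-seminorm. The global continuity half of the stability estimate proved in the lemma following equation (\ref{eq:global_mass_bilinear_form}) yields $a_h(v_h,v_h)\le \alpha^*\,|v_h|_{H^1(\Omega)}^2$, where $\alpha^*$ depends only on the VEM stabilization constants and the mesh chunkiness; for the present statement I absorb $\alpha^*$ into the inverse-inequality constant $C_{\mathrm{inv}}$ (equivalently, I redefine $C_{\mathrm{inv}}^2 \leftarrow \alpha^*\,C_{\mathrm{inv}}^2$, which is standard in the VEM literature and does not change the $h^{-2}$ scaling nor the independence from the edge count). Next, hypothesis $(ii)$ provides the discrete inverse inequality
\begin{equation}
    \nonumber
    |v_h|_{H^1(\Omega)}^2 \;\le\; C_{\mathrm{inv}}^2\, h^{-2}\, \|v_h\|_{L^2(\Omega)}^2,
\end{equation}
so that $a_h(v_h,v_h)\le C_{\mathrm{inv}}^2 h^{-2}\|v_h\|_{L^2(\Omega)}^2$. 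Finally, inverting the lower bound of the $L^2$-equivalence $(i)$ gives $\|v_h\|_{L^2(\Omega)}^2 \le \hat\beta_*^{-1}\,\hat m_h(v_h,v_h)$. Substituting into the Rayleigh quotient and taking the supremum produces the desired bound $\lambda_{\max}\!\big((\M)^{-1}\K\big) \le C_{\mathrm{inv}}^2/(\hat\beta_*\,h^{2})$.

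The essential ingredients are already proved or assumed, so the work reduces to assembling them in the right order. The only delicate point, which is subsumed under hypothesis $(ii)$, is the polytopal inverse inequality for $V_{h,k}$: it can be reduced to the local statement $|v_h|_{H^1(E)}\le C\,h_E^{-1}\|v_h\|_{L^2(E)}$ on each star-shaped element and then summed, with the constant depending only on $d$, $k$ and the chunkiness parameter $\gamma$ thanks to the standard VEM norm-equivalence machinery (see the DOF $L^2$-equivalence invoked in Lemma \ref{lemma:lumped_mass_stability}). The main conceptual obstacle, therefore, is not the algebra of the Rayleigh quotient but ensuring that every constant in the chain is genuinely independent of the number of edges per element; this is guaranteed here by invoking the edge-count-independent lower bound $\hat\beta_*$ from Lemma \ref{lemma:lumped_mass_stability} and the shape-regular form of the inverse inequality, so that the final constant $C_{\mathrm{inv}}^2/\hat\beta_*$ inherits the correct mesh-robustness.
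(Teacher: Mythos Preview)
Your proposal is correct and follows essentially the same route as the paper's proof: express $\lambda_{\max}$ as the Rayleigh quotient, bound the numerator $a_h(v_h,v_h)$ via the inverse inequality, bound the denominator $\hat m_h(v_h,v_h)$ from below via the lumped-mass $L^2$-equivalence, and combine. The only difference is that you explicitly invoke the continuity constant $\alpha^*$ from the stability lemma and then absorb it into $C_{\mathrm{inv}}$, whereas the paper writes $a_h(v_h,v_h)=\|\nabla v_h\|_{L^2(\Omega)}^2$ directly (silently absorbing $\alpha^*$); your treatment is arguably more precise on this point, but the structure and ingredients are identical.
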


\begin{proof}
    The maximum eigenvalue is given by the Rayleigh quotient:
    \begin{equation} 
        \nonumber
        \lambda_{\max} \left( (\M)^{-1} \K \right) = \max \limits_{\mathbf{v} \neq \mathbf{0}} \frac{\mathbf{v}^T \K \mathbf{v}}{\mathbf{v}^T \M \mathbf{v}} = \max \limits_{v_h \in V_{h,k} \setminus \{0\}} \frac{a_h(v_h, v_h)}{\langle v_h, v_h\rangle^{\text{lump}}_h},
    \end{equation}
    where
    \begin{equation}
        \nonumber
        v_h = \sum \limits^{N_{\text{dof}}}_{i=1} v_i \psi_i,
    \end{equation}
    and
    \begin{equation}
        \nonumber
        a_h(v_h, v_h) = \| \nabla v_h \|^2_{\lebesguespace{\Omega}}.
    \end{equation}

    From Lemma \ref{lemma:lumped_mass_stability}, it is known that:
    \begin{equation}
        \nonumber
        \langle v_h, v_h \rangle^{\text{lump}}_h \geq \hat{\beta}_* \| v_h \|^2_{\lebesguespace{\Omega}}.
    \end{equation}
    Thus,
    \begin{equation}
        \nonumber
        \lambda_{\max} \leq \frac{1}{\hat{\beta}_*} \max \limits_{v_h \neq 0}\frac{\| \nabla v_h \|_{\lebesguespace{\Omega}}}{\| v_h \|_{\lebesguespace{\Omega}}}.
    \end{equation}
    The maximum quotient is bounded by the inverse inequality presented in \cite{beirao2013vem}:
    \begin{equation}
        \nonumber
        \| \nabla v_h \|^2_{\lebesguespace{\Omega}} \leq C_{\text{inv}}^2 h^{-2} \| v_h \|^2_{\lebesguespace{\Omega}}.
    \end{equation}
    Therefore,
    \begin{equation}
        \nonumber
        \lambda_{\max} \leq \frac{C^2_{\text{inv}}}{\hat{\beta}_*}h^{-2} = \frac{C}{h^2},
    \end{equation}
    with
    \begin{equation}
        \nonumber
        C = \frac{C^2_{\text{inv}}}{\hat{\beta}_*} > 0
    \end{equation}
    depending only on $d$, $k$ and $\gamma$.
\end{proof}

\section{Numerical experiments}
\label{sec:numerical}

The present section illustrates the performance of the proposed lumped virtual element discretization through two complementary numerical studies. The first one is a convergence test based on a manufactured smooth solution, designed to verify the theoretical spatial accuracy of the method and to compare the behavior of the explicit SSP--Runge--Kutta integrators introduced in Section~\ref{sec:runge_kutta}. The second one addresses a variable-coefficient diffusion problem with discontinuous and anisotropic tensors, with the aim of assessing the robustness of the lumped explicit formulation in a more demanding setting where no analytical reference solution is available. Taken together, these experiments provide both a verification of the theoretical estimates and an indication of the practical behavior of the method in heterogeneous diffusion regimes.

All experiments are carried out on the same three mesh families already considered throughout the paper, namely distorted quadrilateral meshes, serendipity-type quadrilateral meshes, and general Voronoi polygonal meshes. Representative examples of these discretizations are shown in Figures~\ref{fig:mesh-distorted}, \ref{fig:mesh-serendipity}, and \ref{fig:mesh-voronoi}. For each family, a sequence of successively refined meshes is employed, with characteristic mesh sizes $h=0.2500$, $0.1250$, and $0.0625$ (or the corresponding values obtained from the actual polygonal tessellations). This common mesh setting allows a direct comparison between the manufactured-solution study and the heterogeneous diffusion test, while also highlighting the behavior of the proposed method on both structured and genuinely polygonal grids.

The spatial discretization is the conforming scalar VEM of order $k=1$ combined with the lumped mass construction developed in the previous sections. Time integration is performed by explicit SSP--RK schemes, with the time step chosen according to a diffusion-type scaling $\Delta t=\theta h^2$, where $\theta$ is selected below the stability threshold predicted by the spectral estimates for $(\M)^{-1}\K$; cf.~Section~\ref{sec:estimates}. In the first study this choice ensures that temporal errors remain negligible with respect to the spatial discretization error, whereas in the second study it provides the appropriate framework for investigating the CFL restriction in the presence of heterogeneous and anisotropic diffusion coefficients.

\begin{figure}[H]
    \centering
    \includegraphics[width=0.85\textwidth]{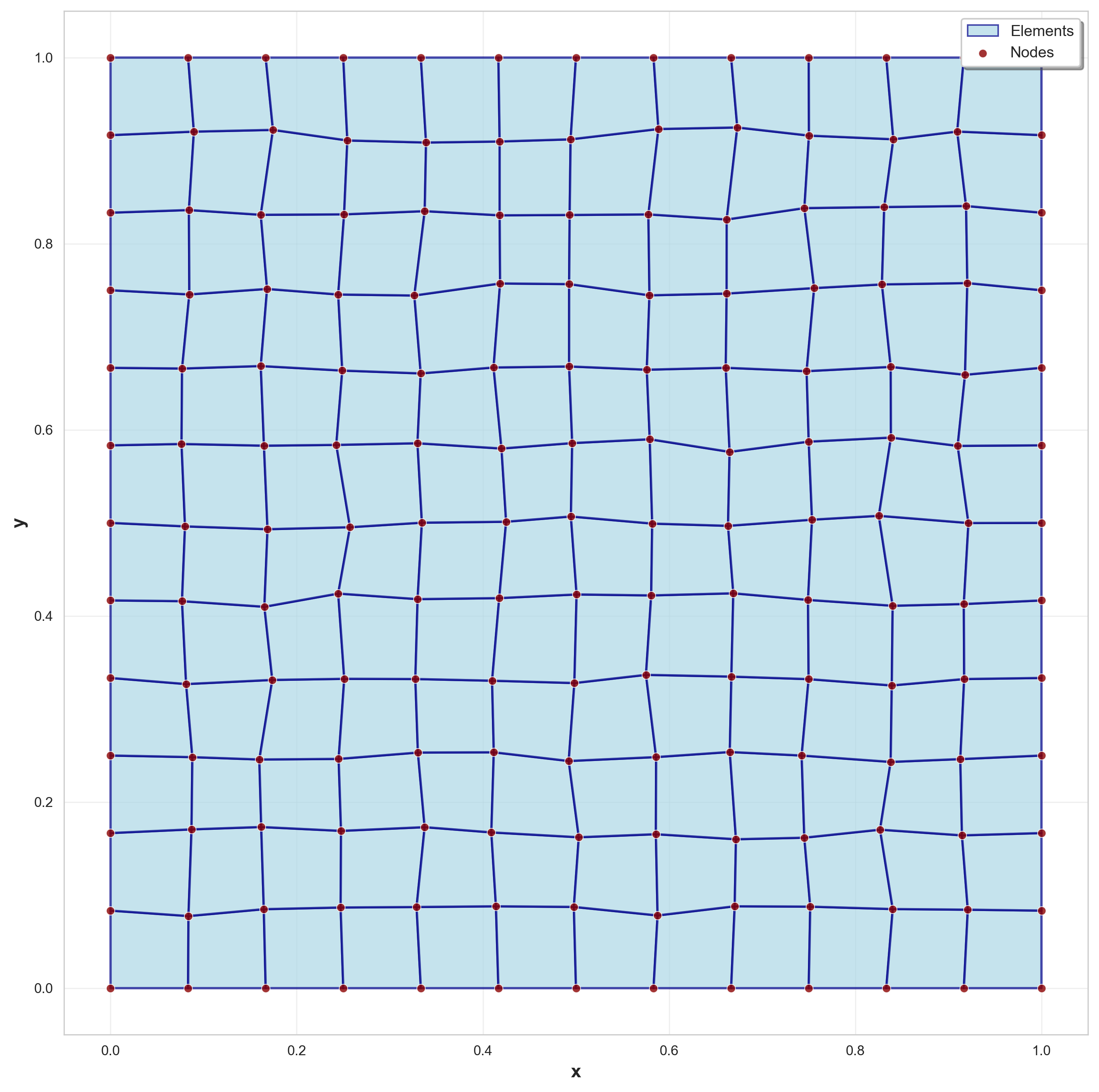}
    \caption{Distorted Q4 mesh (12$\times$12), $h_{\max}=0.125$.}
    \label{fig:mesh-distorted}
\end{figure}

\begin{figure}[H]
    \centering
    \includegraphics[width=0.85\textwidth]{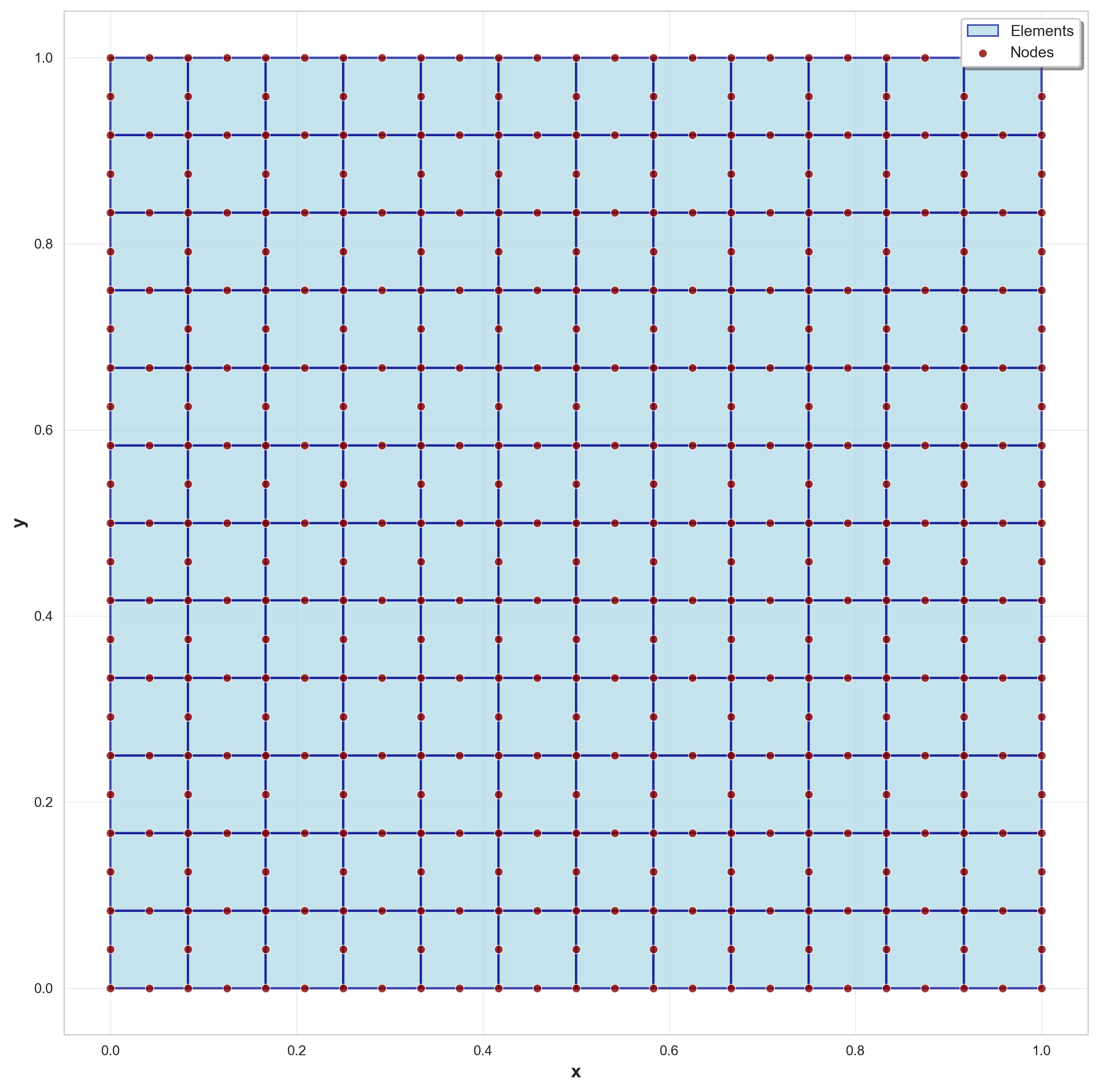}
    \caption{Serendipity Q8 mesh (12$\times$12), $h_{\max}=0.125$.}
    \label{fig:mesh-serendipity}
\end{figure}

\begin{figure}[H]
    \centering
    \includegraphics[width=0.85\textwidth]{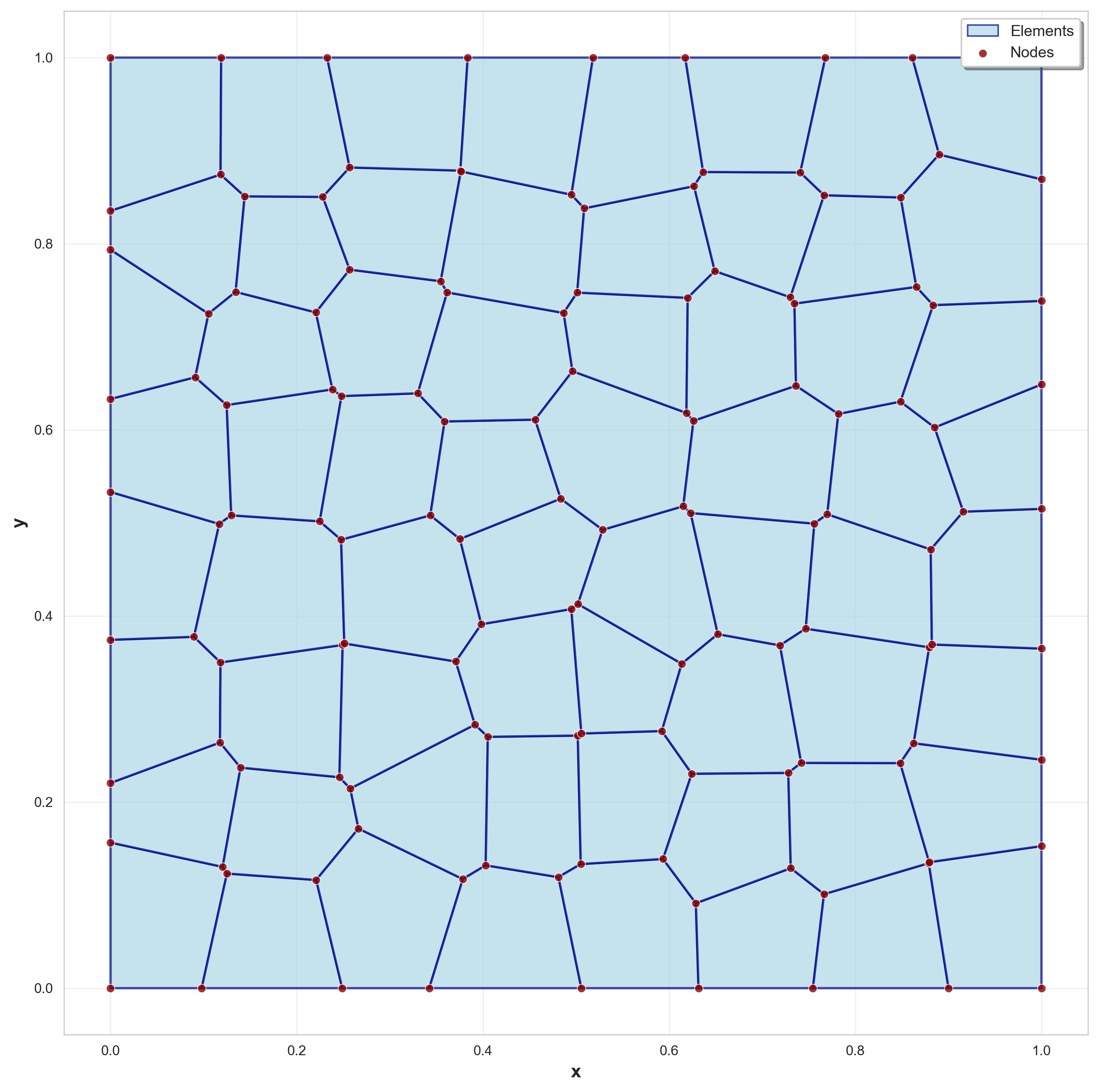}
    \caption{Voronoi polygonal mesh (avg. 5.5 vertices/element).}
    \label{fig:mesh-voronoi}
\end{figure}

\subsection{Convergence study for a manufactured parabolic solution}
\label{sec:manufactured}

The numerical experiments consider the parabolic problem $\partial_t u - \Delta u = f$ on the unit square $\Omega=(0,1)\times(0,1)$ for $t\in(0,T]$ with $T=1$, subject to homogeneous Dirichlet boundary conditions $u=0$ on $\partial\Omega$. The full model reads
\begin{equation}
    \nonumber
    \left\{
    \begin{aligned}
        \frac{\partial u}{\partial t} - \Delta u &= f(t,x,y) && \text{in } \Omega \times (0,T],\\
        u(t,x,y) &= 0 && \text{on } \partial\Omega \times (0,T],\\
        u(0,x,y) &= u_0(x,y) && \text{in } \Omega.
    \end{aligned}
    \right.
\end{equation}

An analytical solution is prescribed as
\begin{equation}
    \nonumber
    u(t,x,y) = e^{t}\,\sin(\pi x)\,\sin(\pi y),
\end{equation}
which implies the initial condition
\begin{equation}
    \nonumber
    u_0(x,y) = \sin(\pi x)\,\sin(\pi y),
\end{equation}
and automatically satisfies the boundary conditions since $\sin(\pi x)\sin(\pi y)=0$ on $\partial\Omega$.

The right-hand side follows from substitution in the PDE. Using
\begin{equation}
    \nonumber
    \partial_t u = e^{t}\sin(\pi x)\sin(\pi y)
\end{equation}
and
\begin{equation}
    \nonumber
    \Delta u = -2\pi^2 e^{t}\sin(\pi x)\sin(\pi y),
\end{equation}
one obtains
\begin{equation}
    \nonumber
    f(t,x,y) = e^{t}\,\sin(\pi x)\,\sin(\pi y)\,\big(1+2\pi^2\big).
\end{equation}
For $H^1$-error evaluation, the exact gradient is
\begin{equation}
    \nonumber
    \nabla u(t,x,y) = e^{t}\begin{pmatrix}
        \pi \cos(\pi x)\,\sin(\pi y)\\[2pt]
        \pi \sin(\pi x)\,\cos(\pi y)
    \end{pmatrix}.
\end{equation}

Spatial discretization uses the conforming scalar VEM of order $k$. Theoretical rates are $\|u-u_h\|_{L^2(\Omega)}=\mathcal{O}(h^{k+1})$ and $\|u-u_h\|_{H^1(\Omega)}=\mathcal{O}(h^{k})$; in particular, for $k=1$ one expects $\mathcal{O}(h^{2})$ in $L^2$ and $\mathcal{O}(h)$ in $H^1$.

The chosen time integration schemes are the SSP-RK of third and fourth order as explained in Section \ref{sec:runge_kutta}. Explicit schemes are combined with a lumped (diagonal) mass matrix to avoid global solves and to enforce strong-stability properties under a CFL restriction consistent with the spectral bound for $(\M)^{-1}\K$.

Verification follows the analytical-solution paradigm: the exact solution is known for all $(t,x,y)$, the boundary conditions are satisfied by construction, and the exponential dependence in time provides a nonstationary but smooth benchmark. Errors are measured in the $L^2$-norm and in the $H^1$-seminorm across mesh refinements and element families in order to confirm the predicted VEM convergence rates. For the distorted Q4 and serendipity Q8 meshes, both SSP--RK3 and SSP--RK4 are considered, while for the Voronoi family the results are reported for SSP--RK3. In all cases, the time step is chosen as $\Delta t=\theta h^2$, with $\theta$ below the SSP bound dictated by the spectral estimate for $(\M)^{-1}\K$; cf.~Section~\ref{sec:estimates}. This diffusion-type scaling ensures that temporal errors remain subordinate to spatial errors, so that the observed rates reflect the approximation properties of the lumped VEM scheme. diffusion-type scaling ensures temporal discretization errors remain negligible compared to spatial errors, allowing us to isolate the spatial convergence behavior at $T=1$.

For the distorted quadrilateral family, the convergence plots are shown in Figures~\ref{fig:distorted-h1-convergence} and \ref{fig:distorted-l2-convergence}. The $H^1$-seminorm errors decay at an experimental order of convergence (EOC) close to $1.00$, with the RK3 and RK4 curves nearly indistinguishable and tracking the reference first-order slope. This behavior agrees with the theoretical estimate $\|u-u_h\|_{H^1(\Omega)}=\mathcal{O}(h)$ for the $k=1$ conforming VEM. The $L^2$-norm exhibits the expected second-order decay, with RK4 providing slightly smaller error constants while preserving the same asymptotic rate. The close overlap of both time integrators indicates that, under the chosen $\mathcal{O}(h^2)$ time-step scaling, the total error is dominated by the spatial discretization.

\begin{figure}[H]
    \centering
    \includegraphics[width=0.85\textwidth]{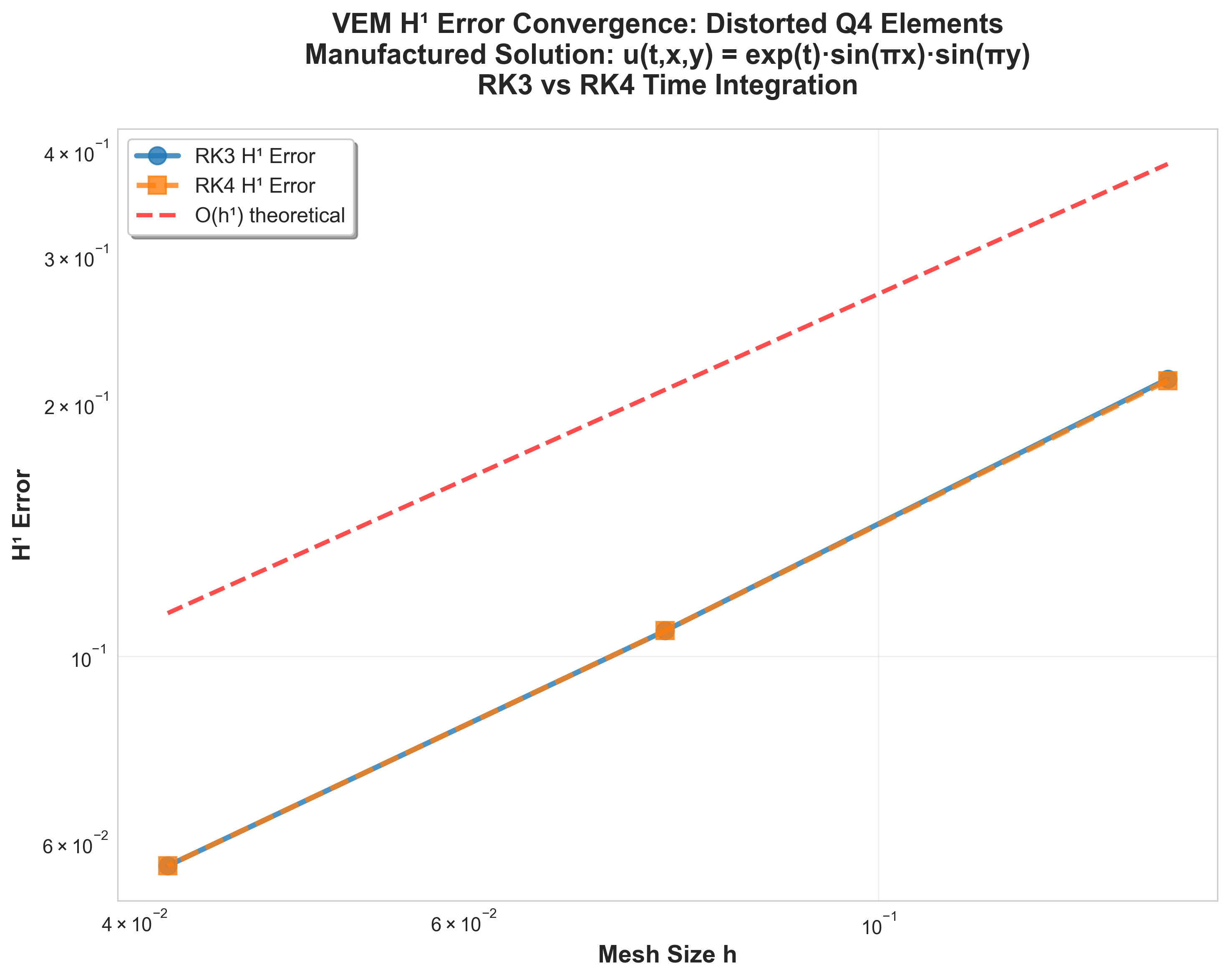}
    \caption{$H^1$-seminorm error convergence for distorted Q4 elements comparing RK3 and RK4 time integration.}
    \label{fig:distorted-h1-convergence}
\end{figure}

\begin{figure}[H]
    \centering
    \includegraphics[width=0.85\textwidth]{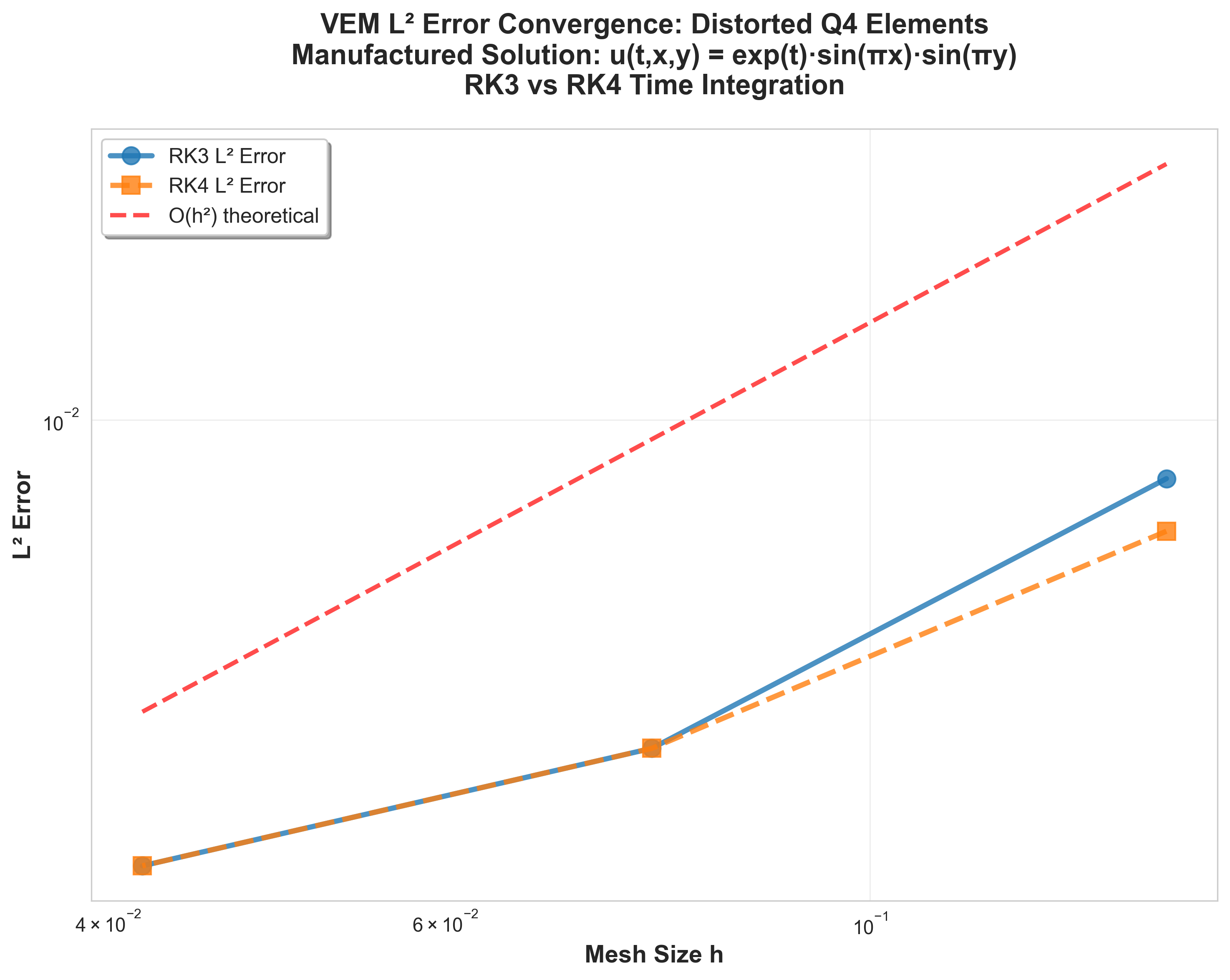}
    \caption{$L^2$-norm error convergence for distorted Q4 elements comparing RK3 and RK4 time integration.}
    \label{fig:distorted-l2-convergence}
\end{figure}

The serendipity family displays the same qualitative behavior, as shown in Figures~\ref{fig:serendipity-h1-convergence} and \ref{fig:serendipity-l2-convergence}. The measured convergence rates remain close to $1.00$ in the $H^1$-seminorm and $2.00$ in the $L^2$-norm, demonstrating that the reduced edge-based degrees of freedom do not compromise the approximation properties of the method. Once again, RK3 and RK4 produce nearly overlapping error curves, with only a mild reduction of the $L^2$ error constants for RK4. The consistency between distorted bilinear elements and serendipity-type elements confirms that the lumped formulation preserves optimal convergence independently of the underlying quadrilateral discretization strategy.

\begin{figure}[H]
    \centering
    \includegraphics[width=0.85\textwidth]{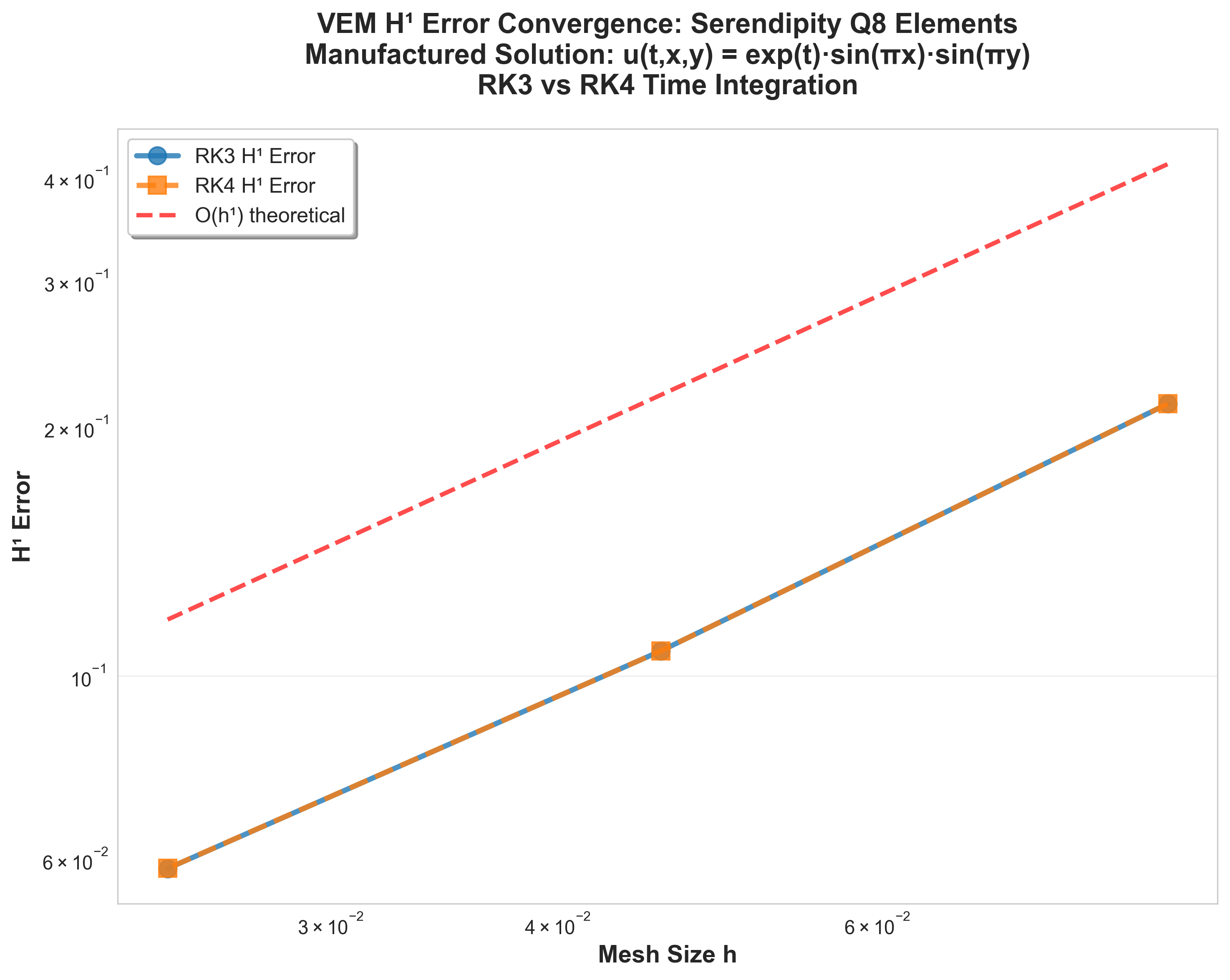}
    \caption{$H^1$-seminorm error convergence for serendipity Q8 elements comparing RK3 and RK4 time integration.}
    \label{fig:serendipity-h1-convergence}
\end{figure}

\begin{figure}[H]
    \centering
    \includegraphics[width=0.85\textwidth]{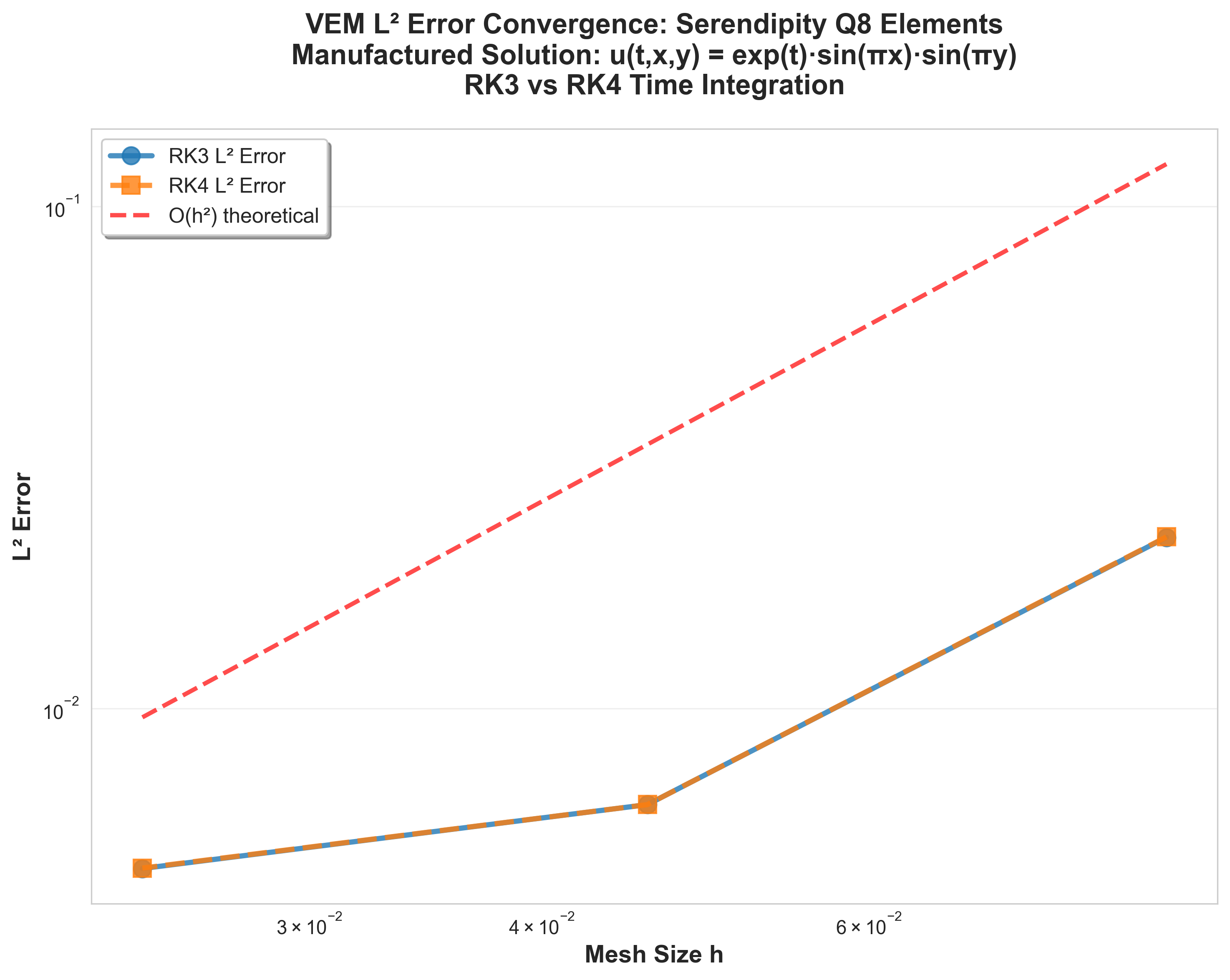}
    \caption{$L^2$-norm error convergence for serendipity Q8 elements comparing RK3 and RK4 time integration.}
    \label{fig:serendipity-l2-convergence}
\end{figure}

The genuinely polygonal Voronoi meshes further confirm the robustness of the method on general element shapes; see Figures~\ref{fig:voronoi-h1-convergence} and \ref{fig:voronoi-l2-convergence}. The $H^1$-seminorm errors decay at a rate close to first order, while the $L^2$-errors exhibit a trend compatible with the theoretical $\mathcal{O}(h^2)$ behavior. Mild pre-asymptotic effects are visible on the coarsest mesh, but the expected rate becomes clearer as the mesh is refined. These results show that mass lumping extends naturally to general polygonal meshes without loss of stability or spatial accuracy.

\begin{figure}[H]
    \centering
    \includegraphics[width=0.85\textwidth]{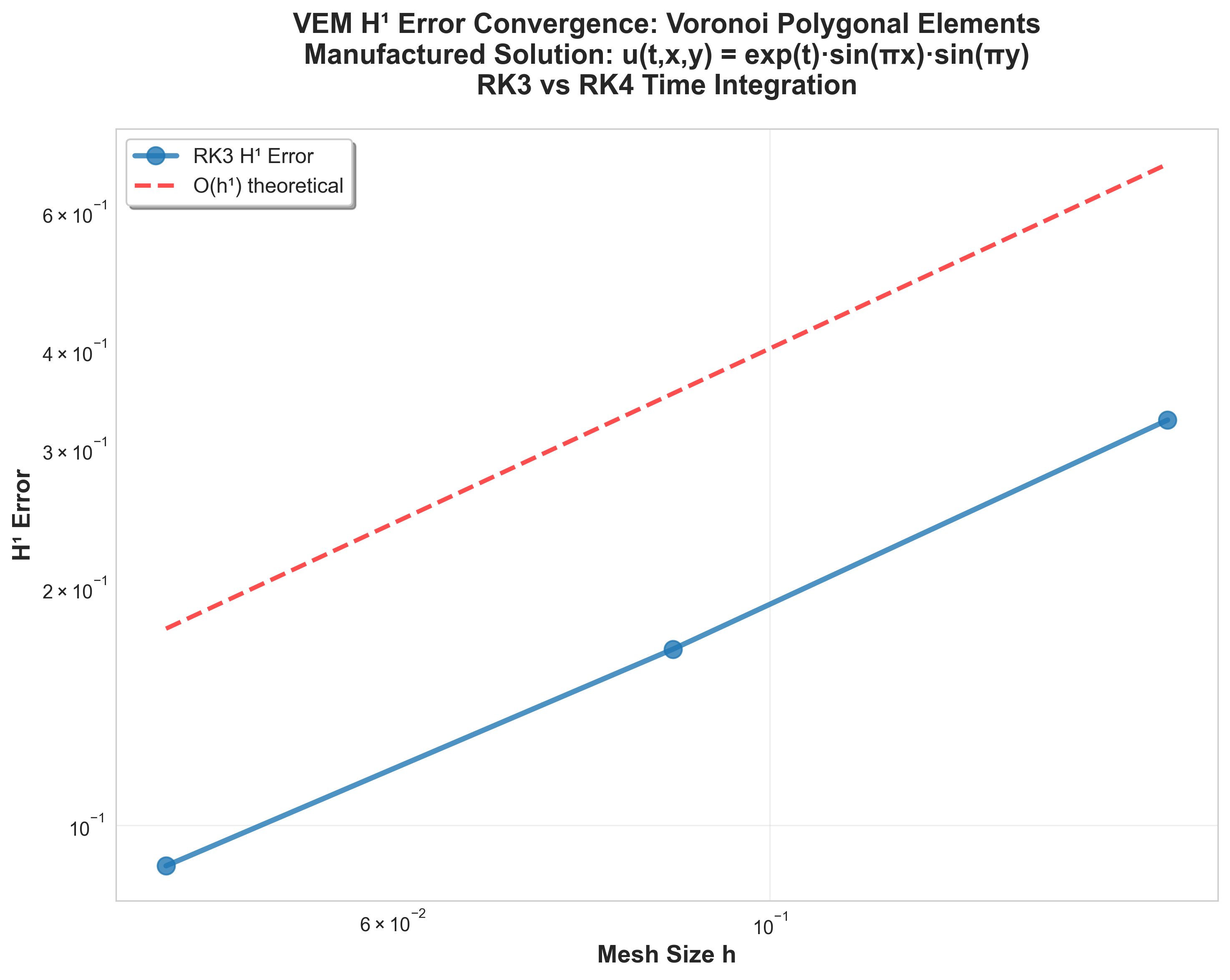}
    \caption{$H^1$-seminorm error convergence for Voronoi polygonal elements using RK3 time integration.}
    \label{fig:voronoi-h1-convergence}
\end{figure}

\begin{figure}[H]
    \centering
    \includegraphics[width=0.85\textwidth]{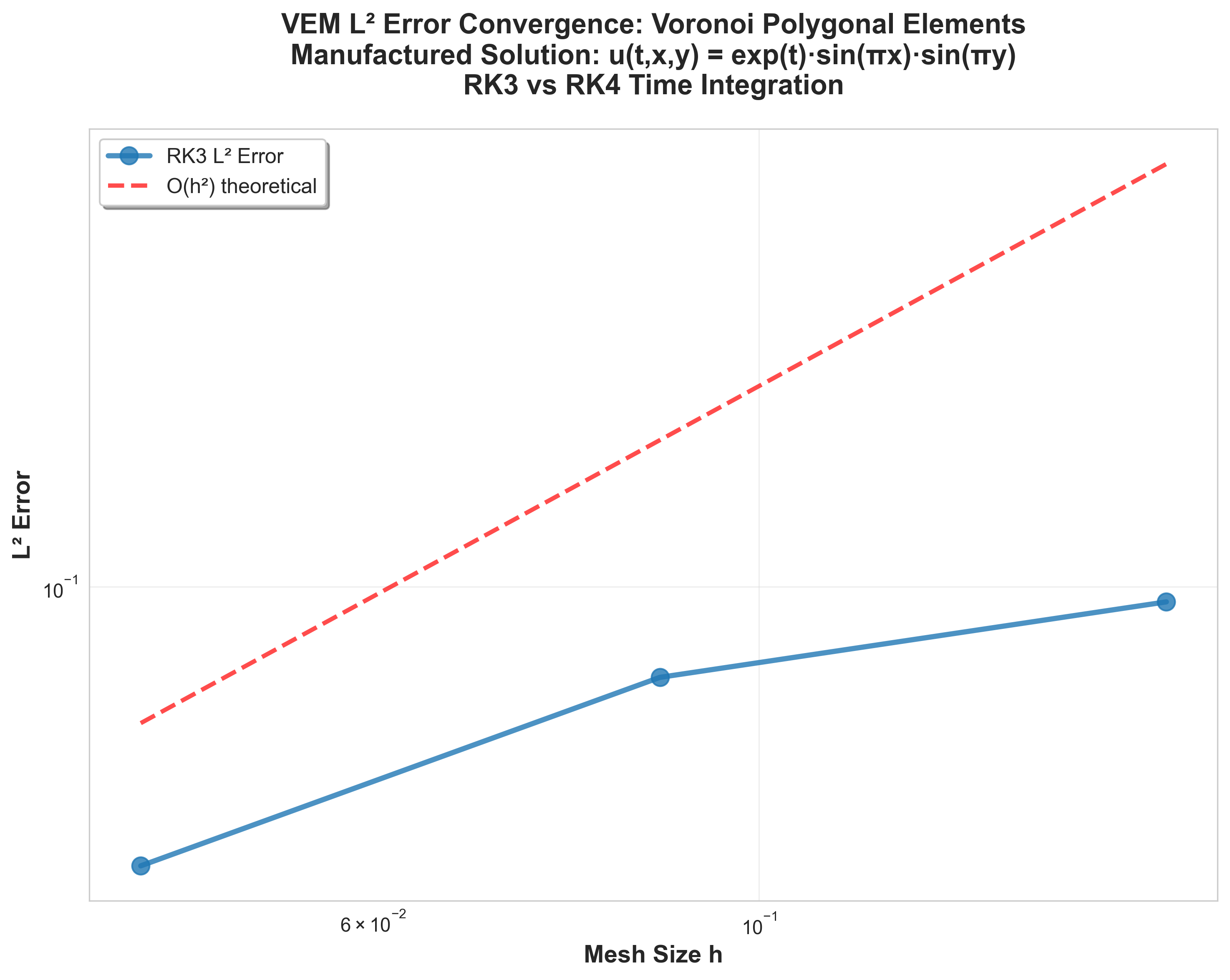}
    \caption{$L^2$-norm error convergence for Voronoi polygonal elements using RK3 time integration.}
    \label{fig:voronoi-l2-convergence}
\end{figure}

In summary, these numerical experiments validate the theoretical estimates across all three mesh families: the lumped VEM with $k=1$ exhibits first-order $H^1$ convergence and an $L^2$ behavior consistent with second-order accuracy on distorted quadrilaterals, serendipity elements, and general Voronoi polygons, with no visible degradation due to geometric distortion, polynomial enrichment strategy, or diagonal mass approximation. The SSP time integrators remain stable under a mesh-robust CFL condition $\Delta t\propto h^{2}$, and while upgrading from RK3 to RK4 improves prefactors in the $L^2$-norm, it does not alter the asymptotic spatial convergence trends.

\subsection{Accuracy and cost comparison with a consistent-mass implicit scheme}

In order to complement the convergence study of Section~\ref{sec:manufactured}, a second benchmark is considered in which the proposed lumped explicit formulation is compared with a standard implicit discretization based on the consistent mass matrix. The objective is not to reassess asymptotic convergence rates, which have already been verified for the manufactured solution of the previous subsection, but rather to examine the practical effect of mass lumping in a setting where the exact solution is available and both accuracy and computational cost can be measured directly. The model problem, the exact solution, and the forcing term are the same as those introduced in Section~\ref{sec:manufactured}. In particular, the benchmark is posed on $\Omega=(0,1)\times(0,1)$ up to the final time $T=1$, and provides an exact reference for the evaluation of the final $L^2$- and $H^1$-errors.

The same conforming scalar virtual element discretization of order $k=1$ is employed on the three mesh families introduced above, namely distorted quadrilateral meshes, serendipity-type quadrilateral meshes, and Voronoi polygonal meshes. The spatial approximation space and the discrete diffusion operator are therefore identical in the two methods under comparison, so that the only difference lies in the treatment of the mass matrix and in the associated time integrator.

Table~\ref{tab:accuracy} reports the final errors and the number of time steps for the two methods on all mesh families. As expected, the explicit SSP--RK3 discretization requires a substantially larger number of time steps, owing to the parabolic restriction $\Delta t_E=\mathcal{O}(h^2)$, whereas the implicit Euler method can be advanced with time steps of order $\Delta t_I=\mathcal{O}(h)$. Nevertheless, the two approaches deliver very similar final errors in both the $L^2$-norm and the $H^1$-seminorm over the whole refinement range. In particular, the $H^1$-errors are essentially indistinguishable for each pair of runs, while the $L^2$-errors remain of the same magnitude and show the same decay trend under mesh refinement. This indicates that, for the selected time-step choices, the temporal discretization error remains subordinate to the spatial error in both cases, and confirms that the lumped explicit formulation preserves the approximation properties already observed in Section~\ref{sec:manufactured}.

The corresponding computational times are summarized in Table~\ref{tab:cost}. For all three mesh families and for all refinement levels, the dominant contribution to the total runtime is the assembly stage, which includes the computation of the local virtual element projections. By contrast, the additional cost associated with the implicit method, namely the sparse factorization and the repeated linear solves, remains comparatively small at the tested problem sizes. On the explicit side, the absence of global solves makes each SSP--RK3 stage inexpensive, but the larger number of time steps leads to a visible, though still secondary, increase in the integration time. Overall, Table~\ref{tab:cost} shows that the practical cost of both approaches is governed primarily by matrix assembly rather than by the time-integration kernel itself.

This comparison is summarized in Table~\ref{tab:efficiency}, where the total runtimes are reported together with the corresponding errors and the measured speedup factor. The results indicate that, at matched accuracy, the overall performance of the two approaches is very similar on the present mesh sequences. For the distorted quadrilateral and serendipity families, the speedup remains essentially equal to one across the refinement range, while for the coarsest Voronoi mesh the implicit method is slightly faster, a difference that disappears on the finer meshes. These data therefore do not suggest a decisive end-to-end timing advantage for either strategy in the present implementation. Rather, they show that the algorithmic simplification introduced by mass lumping, namely the replacement of global linear solves by diagonal explicit updates, is largely offset here by the fact that the dominant cost is common to both methods and lies in the assembly of the virtual element operators. This conclusion is also illustrated in Figure~\ref{fig:error-runtime-comparison}, where the $H^1$-error is plotted against the total wall-clock time for each of the three mesh families. The curves remain very close throughout, showing that the two approaches achieve comparable accuracy at comparable overall computational cost.

\begin{figure}[H]
    \centering
    \includegraphics[width=\textwidth]{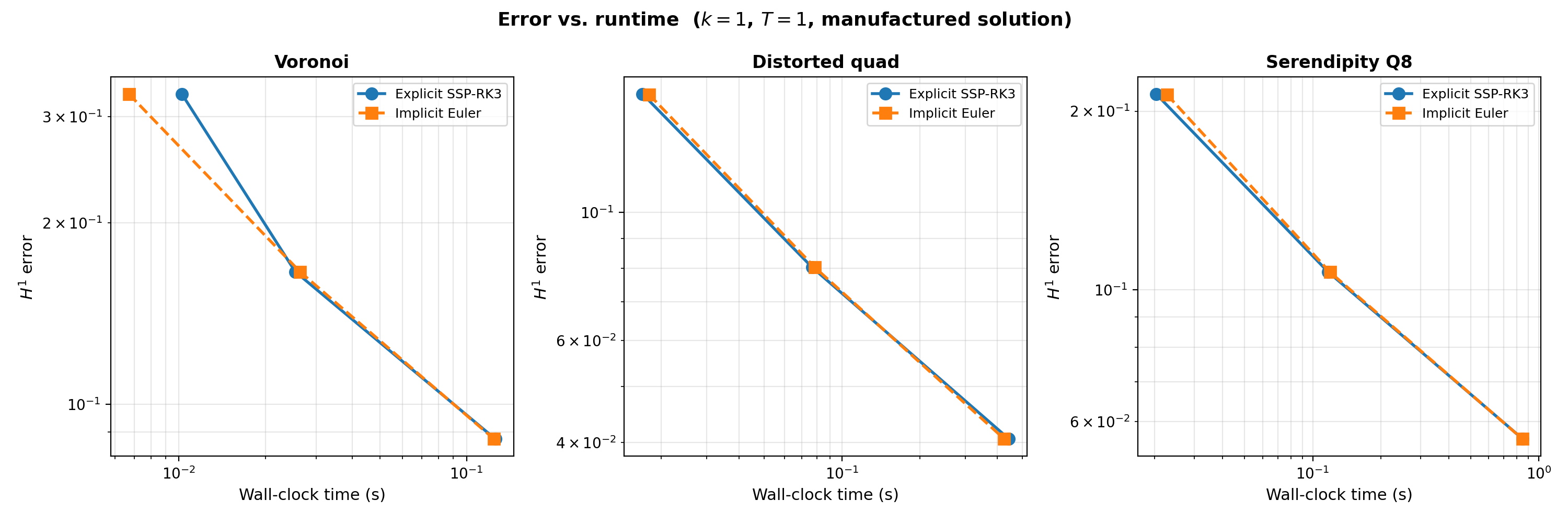}
    \caption{$H^1$-error versus total wall-clock time for the lumped explicit SSP--RK3 scheme and the consistent-mass implicit Euler method on the manufactured benchmark. For all three mesh families, the two methods exhibit very similar error--cost behavior, confirming that at the tested resolutions neither approach yields a decisive practical advantage in end-to-end runtime.}
    \label{fig:error-runtime-comparison}
\end{figure}

Taken together, Tables~\ref{tab:accuracy}--\ref{tab:efficiency} provide a useful practical complement to the convergence study. On the one hand, they confirm that the proposed lumped explicit scheme attains an accuracy comparable to that of a standard consistent-mass implicit discretization on the same benchmark problem. On the other hand, they show that, for the present prototype and for the moderate problem sizes considered here, the structural advantage of explicit diagonal updates is not yet reflected in a significant wall-clock speedup. This observation is consistent with the fact that the comparison is performed in a regime where assembly costs dominate. Even so, the experiment remains relevant, since it isolates the effect of mass lumping in a controlled setting and shows that the proposed formulation achieves the expected accuracy without relying on global linear solves during time integration.

\begin{table}[H]
    \centering
    \caption{Accuracy comparison: lumped explicit SSP-RK3 vs.\ consistent-mass implicit Euler on the manufactured benchmark ($k=1$, $T=1$).}\label{tab:accuracy}
    \begin{tabular}{l c r l r r c c}
    \toprule
    Mesh & $h$ & $N_{\mathrm{dof}}$ & Method & $\Delta t$ & $N_t$ & $\|e\|_{L^2}$ & $|e|_{H^1}$ \\
    \midrule
    Voronoi & 0.1715 & 34 & E (SSP-RK3) & 2.3e-02 & 43 & 5.32e-02 & 3.27e-01 \\
    Voronoi & 0.1715 & 34 & I (Euler) & 8.3e-02 & 12 & 6.28e-02 & 3.27e-01 \\
    Voronoi & 0.0877 & 130 & E (SSP-RK3) & 5.3e-03 & 189 & 3.30e-02 & 1.66e-01 \\
    Voronoi & 0.0877 & 130 & I (Euler) & 4.3e-02 & 23 & 3.40e-02 & 1.66e-01 \\
    Voronoi & 0.0441 & 514 & E (SSP-RK3) & 1.2e-03 & 834 & 1.39e-02 & 8.76e-02 \\
    Voronoi & 0.0441 & 514 & I (Euler) & 2.2e-02 & 46 & 1.35e-02 & 8.77e-02 \\
    \addlinespace
    Distorted quad & 0.1111 & 81 & E (SSP-RK3) & 8.6e-03 & 116 & 3.35e-03 & 1.60e-01 \\
    Distorted quad & 0.1111 & 81 & I (Euler) & 5.6e-02 & 18 & 8.80e-04 & 1.60e-01 \\
    Distorted quad & 0.0588 & 289 & E (SSP-RK3) & 2.1e-03 & 484 & 9.22e-04 & 8.03e-02 \\
    Distorted quad & 0.0588 & 289 & I (Euler) & 2.9e-02 & 34 & 6.52e-04 & 8.03e-02 \\
    Distorted quad & 0.0303 & 1089 & E (SSP-RK3) & 5.0e-04 & 1983 & 4.42e-04 & 4.06e-02 \\
    Distorted quad & 0.0303 & 1089 & I (Euler) & 1.5e-02 & 66 & 4.00e-04 & 4.06e-02 \\
    \addlinespace
    Serendipity Q8 & 0.0867 & 133 & E (SSP-RK3) & 1.0e-02 & 96 & 2.75e-02 & 2.14e-01 \\
    Serendipity Q8 & 0.0867 & 133 & I (Euler) & 4.2e-02 & 24 & 2.42e-02 & 2.13e-01 \\
    Serendipity Q8 & 0.0456 & 481 & E (SSP-RK3) & 2.5e-03 & 405 & 1.06e-02 & 1.07e-01 \\
    Serendipity Q8 & 0.0456 & 481 & I (Euler) & 2.3e-02 & 44 & 9.46e-03 & 1.07e-01 \\
    Serendipity Q8 & 0.0242 & 1703 & E (SSP-RK3) & 6.6e-04 & 1513 & 6.67e-03 & 5.62e-02 \\
    Serendipity Q8 & 0.0242 & 1703 & I (Euler) & 1.2e-02 & 83 & 6.21e-03 & 5.61e-02 \\
    \bottomrule
    \end{tabular}
\end{table}

\begin{table}[H]
    \centering
    \caption{Computational cost breakdown (seconds). Assembly includes VEM projection computation. Factorization applies only to the implicit method.}\label{tab:cost}
    \begin{tabular}{l c l r r r r}
    \toprule
    Mesh & $h$ & Method & Assembly & Factor. & Integration & Total \\
    \midrule
    Voronoi & 0.1715 & E (SSP-RK3) & 0.0095 & --- & 0.0007 & 0.0102 \\
    Voronoi & 0.1715 & I (Euler) & 0.0062 & 0.0004 & 0.0001 & 0.0067 \\
    Voronoi & 0.0877 & E (SSP-RK3) & 0.0216 & --- & 0.0038 & 0.0254 \\
    Voronoi & 0.0877 & I (Euler) & 0.0259 & 0.0002 & 0.0003 & 0.0264 \\
    Voronoi & 0.0441 & E (SSP-RK3) & 0.1024 & --- & 0.0235 & 0.1259 \\
    Voronoi & 0.0441 & I (Euler) & 0.1223 & 0.0009 & 0.0013 & 0.1245 \\
    \addlinespace
    Distorted quad & 0.1111 & E (SSP-RK3) & 0.0149 & --- & 0.0020 & 0.0169 \\
    Distorted quad & 0.1111 & I (Euler) & 0.0177 & 0.0001 & 0.0001 & 0.0179 \\
    Distorted quad & 0.0588 & E (SSP-RK3) & 0.0664 & --- & 0.0102 & 0.0767 \\
    Distorted quad & 0.0588 & I (Euler) & 0.0776 & 0.0004 & 0.0006 & 0.0786 \\
    Distorted quad & 0.0303 & E (SSP-RK3) & 0.3723 & --- & 0.0703 & 0.4426 \\
    Distorted quad & 0.0303 & I (Euler) & 0.4199 & 0.0016 & 0.0033 & 0.4248 \\
    \addlinespace
    Serendipity Q8 & 0.0867 & E (SSP-RK3) & 0.0185 & --- & 0.0018 & 0.0203 \\
    Serendipity Q8 & 0.0867 & I (Euler) & 0.0222 & 0.0001 & 0.0003 & 0.0227 \\
    Serendipity Q8 & 0.0456 & E (SSP-RK3) & 0.1051 & --- & 0.0119 & 0.1169 \\
    Serendipity Q8 & 0.0456 & I (Euler) & 0.1176 & 0.0005 & 0.0011 & 0.1192 \\
    Serendipity Q8 & 0.0242 & E (SSP-RK3) & 0.7592 & --- & 0.0866 & 0.8458 \\
    Serendipity Q8 & 0.0242 & I (Euler) & 0.8347 & 0.0029 & 0.0084 & 0.8459 \\
    \bottomrule
    \end{tabular}
\end{table}

\begin{table}[H]
    \centering
    \caption{Efficiency comparison at matched accuracy. Speedup = runtime(implicit Euler) / runtime(explicit SSP-RK3).}\label{tab:efficiency}
    \begin{tabular}{l c c c c c c c c}
    \toprule
    Mesh & $h$ & $\|e\|_{L^2}^E$ & $\|e\|_{L^2}^I$ & $|e|_{H^1}^E$ & $|e|_{H^1}^I$ & Time$^E$ & Time$^I$ & Speedup \\
    \midrule
    Distorted quad & 0.1111 & 3.4e-03 & 8.8e-04 & 1.6e-01 & 1.6e-01 & 0.017 & 0.018 & 1.1$\times$ \\
    Distorted quad & 0.0588 & 9.2e-04 & 6.5e-04 & 8.0e-02 & 8.0e-02 & 0.077 & 0.079 & 1.0$\times$ \\
    Distorted quad & 0.0303 & 4.4e-04 & 4.0e-04 & 4.1e-02 & 4.1e-02 & 0.443 & 0.425 & 1.0$\times$ \\
    \addlinespace
    Serendipity Q8 & 0.0867 & 2.7e-02 & 2.4e-02 & 2.1e-01 & 2.1e-01 & 0.020 & 0.023 & 1.1$\times$ \\
    Serendipity Q8 & 0.0456 & 1.1e-02 & 9.5e-03 & 1.1e-01 & 1.1e-01 & 0.117 & 0.119 & 1.0$\times$ \\
    Serendipity Q8 & 0.0242 & 6.7e-03 & 6.2e-03 & 5.6e-02 & 5.6e-02 & 0.846 & 0.846 & 1.0$\times$ \\
    \addlinespace
    Voronoi & 0.1715 & 5.3e-02 & 6.3e-02 & 3.3e-01 & 3.3e-01 & 0.010 & 0.007 & 0.7$\times$ \\
    Voronoi & 0.0877 & 3.3e-02 & 3.4e-02 & 1.7e-01 & 1.7e-01 & 0.025 & 0.026 & 1.0$\times$ \\
    Voronoi & 0.0441 & 1.4e-02 & 1.4e-02 & 8.8e-02 & 8.8e-02 & 0.126 & 0.124 & 1.0$\times$ \\
    \bottomrule
    \end{tabular}
\end{table}

\subsection{Robustness study for heterogeneous and anisotropic diffusion}

As a second application, we consider a transient diffusion problem with heterogeneous and anisotropic conductivity. In contrast to the manufactured test of the previous subsection, the objective here is not to assess asymptotic error rates against a known exact solution, but rather to examine the robustness of the lumped explicit VEM discretization in the presence of discontinuous material coefficients, directional diffusion, and nontrivial tensor fields on polygonal meshes. This setting is representative of diffusion processes in composite media, where conductivity may vary abruptly across interfaces and may exhibit preferred directions within each subregion.

More precisely, let $\Omega=(0,1)\times(0,1)$ and $T>0$. We consider the parabolic problem
\begin{equation}
    \nonumber
    \left\{
    \begin{aligned}
        \frac{\partial u}{\partial t} - \nabla\cdot\bigl(K(x)\nabla u\bigr) &= f(t,x,y) && \text{in } \Omega \times (0,T],\\
        u(t,x,y) &= 0 && \text{on } \partial\Omega \times (0,T],\\
        u(0,x,y) &= 0 && \text{in } \Omega,
    \end{aligned}
    \right.
\end{equation}
where the forcing term is chosen as
\begin{equation}
    \nonumber
    f(t,x,y)=e^t\sin(\pi x)\sin(\pi y).
\end{equation}
The diffusion tensor $K(x)$ is assumed symmetric positive definite and piecewise constant, with a discontinuity across the vertical interface
\begin{equation}
    \nonumber
    \Gamma = \{(x,y)\in\Omega : x = 0.5\}.
\end{equation}
Accordingly, the tensor field is defined by
\begin{equation}
    \nonumber
    K(x)=
    \begin{cases}
        K^{-}, & x<0.5,\\[2pt]
        K^{+}, & x\geq 0.5,
    \end{cases}
\end{equation}
with different choices of $K^{-}$ and $K^{+}$ used to generate several benchmark configurations.

The selected test cases are designed to isolate distinct effects of coefficient heterogeneity and anisotropy. The isotropic reference case corresponds to $K^{-}=K^{+}=I$. A first heterogeneous configuration introduces a scalar jump across the interface, with $K^{-}=I$ and $K^{+}=\kappa I$, where $\kappa>1$ denotes the contrast ratio. This case isolates the influence of coefficient discontinuity while preserving isotropy on both sides of the interface. A second configuration considers anisotropy without increasing the maximal eigenvalue, namely
\begin{equation}
    \nonumber
    K^{-}=\mathrm{diag}(1,10^{-2}),
    \qquad
    K^{+}=\mathrm{diag}(10^{-2},1),
\end{equation}
so that the principal diffusion direction changes abruptly across $\Gamma$. Finally, a more demanding mixed case combines rotation, anisotropy, and high contrast by prescribing rotated tensors of the form
\begin{equation}
    \nonumber
    K^{-}=R(\pi/6)\,\mathrm{diag}(1,10^{-2})\,R(\pi/6)^T,
    \qquad
    K^{+}=100\,R(-\pi/4)\,\mathrm{diag}(1,10^{-2})\,R(-\pi/4)^T,
\end{equation}
where $R(\theta)$ denotes the planar rotation matrix of angle $\theta$.

The spatial discretization remains the conforming scalar VEM of order $k=1$, with the same lumped mass construction introduced in the previous sections. No modification of the discrete space is required, since the heterogeneous tensor only enters through the diffusion bilinear form. Time integration is again performed by an explicit SSP Runge--Kutta method, so that the fully discrete scheme retains the advantages of diagonal mass matrices and avoids global linear solves. In this variable-coefficient setting, however, the admissible time step is expected to depend not only on the mesh size but also on the strongest local diffusion scale. In particular, one anticipates a restriction of the form
\begin{equation}
    \nonumber
    \Delta t \lesssim C \,\frac{h^2}{\lambda_{\max}(K)},
\end{equation}
where $\lambda_{\max}(K)$ denotes the largest eigenvalue attained by the diffusion tensor.

Since no closed-form exact solution is available for the discontinuous tensor configurations considered here, this second application is used primarily as a robustness and stability study. The numerical investigation focuses on three complementary aspects: qualitative solution profiles under heterogeneous and anisotropic diffusion, the dependence of the largest stable time step on the coefficient contrast, and the behavior of the normalized stability quantity
\begin{equation}
    \nonumber
    \frac{\Delta t_{\max}\,\lambda_{\max}(K)}{h^2}
\end{equation}
across mesh refinements and mesh families. In this way, the experiment complements the convergence study of the previous subsection by testing the explicit lumped VEM formulation in a more realistic variable-coefficient regime, where discontinuities and directional effects play a central role.

For ease of presentation, the benchmark configurations are identified by short labels in the tables and figures. We denote by \textbf{ISO} the isotropic reference case with $K^{-}=K^{+}=I$, by \textbf{SJ10} and \textbf{SJ100} the scalar-jump cases with $K^{-}=I$ and $K^{+}=10I$ or $K^{+}=100I$, respectively, by \textbf{ANISO} the anisotropy-swap configuration with $K^{-}=\mathrm{diag}(1,10^{-2})$ and $K^{+}=\mathrm{diag}(10^{-2},1)$, and by \textbf{RAJ} the rotated anisotropy with jump, where the diffusion tensors are rotated and scaled as defined above. This notation is used below to improve readability and to facilitate comparison between related test cases.

\begin{table}[htbp]
    \centering
    \caption{Empirical maximum stable time step $\Delta t_{\max}$ for the lumped-mass SSP--RK3 scheme on Voronoi meshes. The case codes are: ISO (isotropic), SJ10 and SJ100 (scalar jump with $\kappa=10$ and $\kappa=100$), ANISO (anisotropy swap), and RAJ (rotated anisotropy with jump).}
    \label{tab:stability_voronoi}
    \begin{tabular}{l c c c c c}
    \toprule
    Case & $h$ & $\lambda_{\max}(K)$ & $\Delta t_{\max}$ & $\Delta t_{\max}/h^2$ & $\Delta t_{\max}\lambda_{\max}/h^2$ \\
    \midrule
    ISO   & 0.1715 & 1.0   & 2.94e-02 & 1.0000 & 1.0000 \\
    ISO   & 0.0877 & 1.0   & 6.61e-03 & 0.8594 & 0.8594 \\
    ISO   & 0.0441 & 1.0   & 1.49e-03 & 0.7656 & 0.7656 \\
    \addlinespace

    SJ10  & 0.1715 & 10.0  & 4.18e-03 & 0.1422 & 1.4219 \\
    SJ10  & 0.0877 & 10.0  & 7.33e-04 & 0.0953 & 0.9531 \\
    SJ10  & 0.0441 & 10.0  & 1.52e-04 & 0.0781 & 0.7813 \\
    \addlinespace

    SJ100 & 0.1715 & 100.0 & 4.23e-04 & 0.0144 & 1.4375 \\
    SJ100 & 0.0877 & 100.0 & 7.33e-05 & 0.0095 & 0.9531 \\
    SJ100 & 0.0441 & 100.0 & 1.52e-05 & 0.0078 & 0.7812 \\
    \addlinespace

    ANISO & 0.1715 & 1.0   & 4.69e-02 & 1.5938 & 1.5938 \\
    ANISO & 0.0877 & 1.0   & 9.38e-03 & 1.2188 & 1.2188 \\
    ANISO & 0.0441 & 1.0   & 2.01e-03 & 1.0312 & 1.0312 \\
    \addlinespace

    RAJ   & 0.1715 & 100.0 & 6.25e-04 & 0.0213 & 2.1250 \\
    RAJ   & 0.0877 & 100.0 & 7.69e-05 & 0.0100 & 1.0000 \\
    RAJ   & 0.0441 & 100.0 & 1.67e-05 & 0.0086 & 0.8594 \\
    \bottomrule
    \end{tabular}
\end{table}

\begin{table}[htbp]
    \centering
    \caption{Cross-mesh stability comparison at comparable $h$. The normalized ratio $\Delta t_{\max}\lambda_{\max}(K)/h^2$ measures the effective CFL number.}
    \label{tab:cross_mesh}
    \begin{tabular}{l l c c c c}
    \toprule
    Case & Mesh & $h$ & $\Delta t_{\max}$ & $\Delta t_{\max}/h^2$ & $\Delta t_{\max}\lambda_{\max}/h^2$ \\
    \midrule
    ISO   & Voronoi          & 0.0877 & 6.61e-03 & 0.8594 & 0.8594 \\
    ISO   & Distorted quad   & 0.1111 & 1.08e-02 & 0.8750 & 0.8750 \\
    ISO   & Serendipity (Q8) & 0.0867 & 1.36e-02 & 1.8125 & 1.8125 \\
    \addlinespace

    SJ10  & Voronoi          & 0.0877 & 7.33e-04 & 0.0953 & 0.9531 \\
    SJ10  & Distorted quad   & 0.1111 & 1.20e-03 & 0.0969 & 0.9687 \\
    SJ10  & Serendipity (Q8) & 0.0867 & 1.43e-03 & 0.1906 & 1.9062 \\
    \addlinespace

    ANISO & Voronoi          & 0.0877 & 9.38e-03 & 1.2188 & 1.2188 \\
    ANISO & Distorted quad   & 0.1111 & 1.20e-02 & 0.9688 & 0.9688 \\
    ANISO & Serendipity (Q8) & 0.0867 & 1.43e-02 & 1.9062 & 1.9062 \\
    \addlinespace

    RAJ   & Voronoi          & 0.0877 & 7.69e-05 & 0.0100 & 1.0000 \\
    RAJ   & Distorted quad   & 0.1111 & 2.24e-04 & 0.0181 & 1.8125 \\
    RAJ   & Serendipity (Q8) & 0.0867 & 1.95e-04 & 0.0259 & 2.5938 \\
    \bottomrule
    \end{tabular}
\end{table}

The qualitative impact of the heterogeneous and anisotropic tensor fields is illustrated in Figure~\ref{fig:heterogeneous-snapshots}, which reports solution snapshots at the final time for the four benchmark configurations. The isotropic reference case exhibits the expected symmetric diffusion pattern, whereas the scalar-jump configuration produces a clear asymmetry across the interface, with stronger diffusion in the high-conductivity region. In the anisotropy-swap case, the solution profile becomes directionally stretched in a manner consistent with the principal axes of the tensor field on each side of the interface. The rotated anisotropy with jump generates the most pronounced distortion, combining directional transport and strong contrast effects. These plots confirm that the proposed formulation captures the qualitative features induced by discontinuous and anisotropic diffusion tensors on general polygonal meshes.

Tables~\ref{tab:stability_voronoi} and \ref{tab:cross_mesh} summarize the empirical stability thresholds of the lumped-mass SSP--RK3 discretization for the benchmark cases ISO, SJ10, SJ100, ANISO, and RAJ. The Voronoi results in Table~\ref{tab:stability_voronoi} show that the largest admissible time step decreases consistently under mesh refinement according to a diffusion-type law of order $h^2$, while its dependence on the material parameters is governed by the largest eigenvalue of the tensor field. In particular, for the scalar-jump cases SJ10 and SJ100, the quantity $\Delta t_{\max}/h^2$ decreases proportionally to $1/\lambda_{\max}(K)$, whereas the normalized indicator $\Delta t_{\max}\lambda_{\max}(K)/h^2$ remains bounded and nearly constant across the three mesh levels. This behavior is fully consistent with the expected restriction $\Delta t \lesssim C h^2/\lambda_{\max}(K)$ and confirms that the lumped explicit formulation preserves the same parabolic CFL structure in the presence of discontinuous coefficients. The anisotropic cases ANISO and RAJ exhibit the same qualitative trend, although with different prefactors reflecting the directional character of the diffusion tensor and the associated mesh-dependent stability constants.

Table~\ref{tab:cross_mesh} complements this analysis by comparing the effective CFL number across three mesh families at comparable values of $h$. For all four benchmark configurations, the quantity $\Delta t_{\max}\lambda_{\max}(K)/h^2$ remains of order one, showing that the stability behavior is robust with respect to the underlying element geometry. The Voronoi and distorted quadrilateral meshes produce closely comparable values, while the serendipity family admits slightly larger stable steps, indicating a more permissive stability constant without altering the overall scaling law. These results support the conclusion that the mass-lumped VEM discretization retains its expected explicit stability behavior not only on general polygonal meshes, but also across different structured and unstructured mesh constructions.

\begin{figure}[H]
    \centering
    \includegraphics[width=\textwidth]{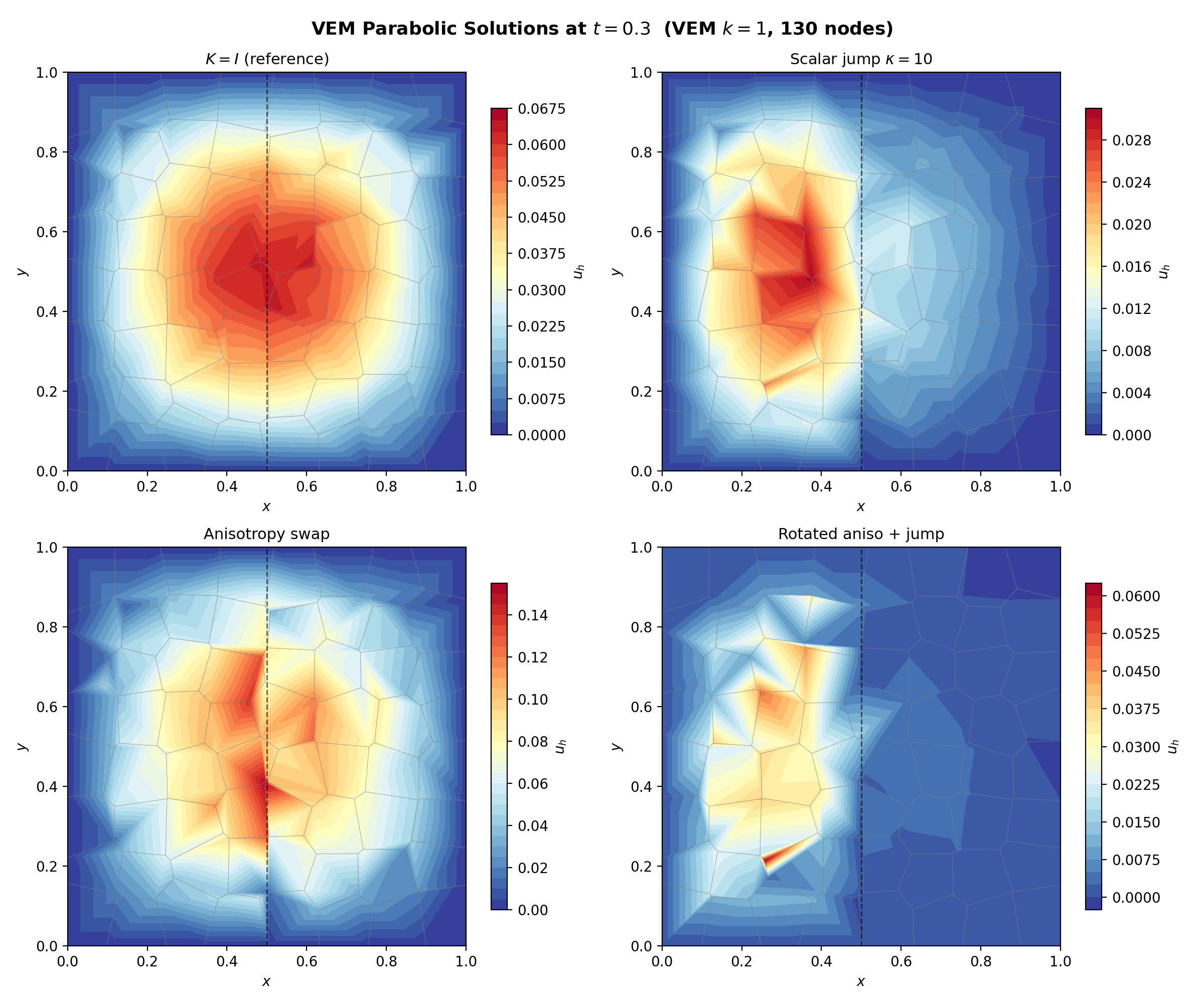}
    \caption{Numerical solutions at final time for the heterogeneous and anisotropic benchmark configurations on a Voronoi mesh. The dashed line indicates the material interface $x=0.5$.}
    \label{fig:heterogeneous-snapshots}
\end{figure}

\begin{figure}[H]
    \centering
    \includegraphics[width=0.8\textwidth]{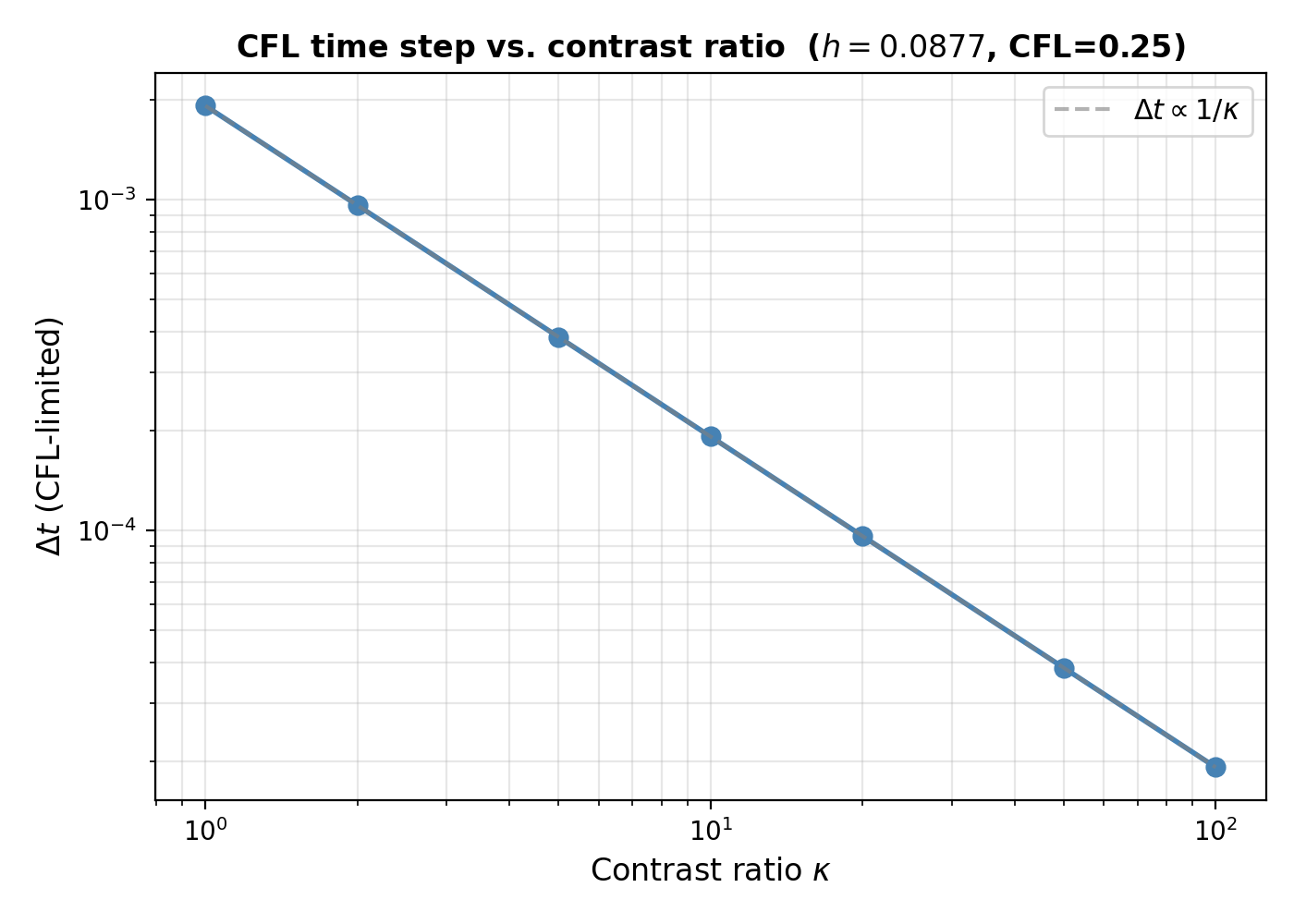}
    \caption{Dependence of the empirical stability threshold on the scalar contrast ratio $\kappa$. The observed behavior is consistent with the scaling $\Delta t_{\max}\propto 1/\kappa$.}
    \label{fig:cfl-sensitivity}
\end{figure}

\section{Conclusion}
\label{sec:conclusion}

This paper has developed and analyzed a mass-lumped Virtual Element Method with explicit Strong Stability-Preserving Runge--Kutta time integration for two-dimensional parabolic problems on general polygonal meshes. The proposed formulation combines VEM spatial discretization, diagonal mass lumping, and SSP-RK temporal schemes into a stable and computationally efficient framework for explicit time integration on arbitrary polygonal grids.

The mass-lumping procedure constructs a diagonal mass matrix by row summation of the consistent VEM mass matrix, complemented by a flooring mechanism ensuring uniform positivity of the lumped weights. A central structural property is that stabilization contributions vanish identically under row summation, so that the lumped weights depend only on the polynomial projection and are obtained through the local system $\mathbf{G}^E\mathbf{w}^E=\mathbf{c}^E$, with computational cost $\mathcal{O}(N_k^3)$ per element and $N_k \ll N_{\mathrm{dof}}^E$. The resulting lumped bilinear form is uniformly equivalent to the consistent $L^2$ inner product, with constants independent of the number of element edges, and therefore provides a symmetric positive definite discrete inner product suitable for explicit time stepping without global mass-matrix inversion.

The analysis yields a mesh-robust spectral estimate of the form
\begin{equation}\nonumber
\lambda_{\max}\!\big((\M)^{-1}\K\big)\ \le\ \frac{C_{\mathrm{inv}}^2}{\hat{\beta}_*}\, h^{-2},
\end{equation}
where the constants depend only on the polynomial degree, the space dimension, and the mesh regularity parameters, but not on the number of element faces or edges. This estimate implies the classical parabolic CFL restriction $\Delta t=\mathcal{O}(h^2)$ for forward Euler stability and extends, through the SSP theory, to higher-order explicit Runge--Kutta methods. In particular, stability properties established at the forward Euler level, such as energy dissipation or monotonicity, are inherited by the multi-stage scheme under the timestep restriction $0<\Delta t \le C_{\mathrm{SSP}}\,\Delta t_{\mathrm{FE}}$, with explicit SSP coefficients for the third-order and fourth-order methods considered.

The numerical investigation confirms the theoretical findings and illustrates the practical behavior of the method on several representative classes of polygonal meshes. The convergence tests on distorted quadrilateral, serendipity-type, and Voronoi families show that the lumped VEM with $k=1$ attains the expected first-order convergence in the $H^1$-seminorm and second-order convergence in the $L^2$-norm, with no visible deterioration due to mesh distortion, element shape, or the diagonal mass approximation. The experiments with higher-order SSP schemes further indicate that the admissible timestep follows the predicted $\Delta t \propto h^2$ scaling, while higher-order time discretizations improve the error constants without altering the spatial asymptotic rates.

The additional numerical examples strengthen the practical assessment of the method in two relevant directions. First, the comparison between the explicit lumped formulation and the corresponding consistent-mass implicit discretization provides an accuracy/efficiency perspective that complements the asymptotic error analysis. It shows that the proposed explicit method remains competitive while retaining the implementation advantages associated with diagonal mass matrices and stage-wise locality. Second, the heterogeneous anisotropic diffusion test confirms that the method preserves its robustness in the presence of spatially varying and strongly directional diffusion tensors, thereby supporting the relevance of the formulation beyond smooth isotropic benchmark problems.

From a computational viewpoint, the method offers a particularly simple and structured explicit solver. The diagonal mass matrix reduces each time update to scaled vector operations and local stiffness applications, avoiding global linear solves and making the procedure naturally parallelizable. This feature is especially attractive on general polygonal meshes, where matrix assembly is already more involved than in standard simplicial settings and where explicit solvers can benefit directly from data locality and hardware acceleration.

Several extensions remain of interest. Variable-coefficient and heterogeneous diffusion operators fit naturally within the projector-based VEM construction, and the present anisotropic tests indicate that this direction is both mathematically and computationally relevant. Reaction--diffusion systems and nonlinear parabolic problems can also benefit from the lumped explicit framework, since the diagonal mass matrix simplifies the evaluation of nonlinear terms and preserves the applicability of SSP time discretizations. Further developments may include operator-splitting strategies for coupled problems, vector-valued formulations, and adaptive polygonal refinement combined with explicit local time-stepping techniques.

Overall, the results show that mass-lumped VEM combined with SSP Runge--Kutta time integration provides a rigorous and effective approach for the explicit approximation of parabolic problems on general polygonal meshes. The combination of optimal spatial convergence, mesh-robust timestep restrictions, computational simplicity, and satisfactory performance in efficiency and anisotropic-diffusion tests makes the method a promising tool for large-scale simulations on complex geometries and nonstandard meshes.

\newpage

\bibliographystyle{unsrt}  


\newpage

\appendix

\section{Results in Mathematics}
\label{ap:results_maths}

\subsection{Rank-nullity}
\label{ap:rank_nullity}

\begin{theo}[Rank-nullity]
    Let $V$ and $W$ be vector spaces over a field $\mathbb{F} = \mathbb{R} \; \text{or} \; \mathbb{C}$ with $\dim(V) < \infty$, and let $T: V \longrightarrow W$ be linear. Then, it holds that:
    \begin{equation}
    \nonumber
        \dim(V) = \dim ( \ker T) + \dim (Im T).
    \end{equation}
    Equivalently, for a matrix $\mathbf{A}\in \mathbb{F}^{m\times n}$, acting as $\mathbf{A}: \mathbb{F}^n \longrightarrow \mathbb{F}^m$,
    \begin{equation}
    \nonumber
        n = nullity(\mathbf{A}) + rank(\mathbf{A}),
    \end{equation} 
    where 
    \begin{equation}
    \nonumber
        nullity (\mathbf{A}) = \dim (\{ \mathbf{x}\in \mathbb{F}^n:\; \mathbf{A} \mathbf{x} = \mathbf{0} \}),
    \end{equation}
    and
    \begin{equation}
    \nonumber
        rank(\mathbf{A}) = \dim (col(\mathbf{A})).
    \end{equation}
\end{theo}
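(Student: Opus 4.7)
The plan is to establish the identity by producing a basis of $\operatorname{Im} T$ whose cardinality equals $\dim(V)-\dim(\ker T)$. The strategy is constructive: start from a basis of the kernel, extend it to a basis of $V$, and show that the images of the extension vectors form a basis of the image.

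First, I would set $k=\dim(\ker T)$ and pick a basis $\{v_1,\dots,v_k\}$ of $\ker T$; this is legitimate because $\ker T$ is a subspace of the finite-dimensional space $V$. Since any linearly independent set in a finite-dimensional vector space can be extended to a basis, I would enlarge $\{v_1,\dots,v_k\}$ to a basis $\{v_1,\dots,v_k,v_{k+1},\dots,v_n\}$ of $V$, where $n=\dim(V)$. The candidate basis of $\operatorname{Im} T$ is then $B:=\{T(v_{k+1}),\dots,T(v_n)\}$.

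Next I would verify that $B$ spans $\operatorname{Im} T$. Any $w\in\operatorname{Im} T$ equals $T(v)$ for some $v\in V$, and writing $v=\sum_{i=1}^n c_i v_i$, linearity and $T(v_i)=0$ for $i\le k$ give $w=\sum_{i=k+1}^n c_i T(v_i)$. Then I would verify linear independence: if $\sum_{i=k+1}^n c_i T(v_i)=0$, then $\sum_{i=k+1}^n c_i v_i\in\ker T$, so it admits an expansion $\sum_{j=1}^k d_j v_j$. Rearranging yields a linear combination of the basis $\{v_1,\dots,v_n\}$ equal to zero, forcing all $c_i$ (and $d_j$) to vanish by independence of the full basis. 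Hence $B$ is a basis and $\dim(\operatorname{Im} T)=n-k$, yielding $\dim(V)=\dim(\ker T)+\dim(\operatorname{Im} T)$. The matrix corollary is immediate by taking $V=\mathbb{F}^n$, $W=\mathbb{F}^m$, and $T=\mathbf{A}$, identifying $\operatorname{Im} T$ with $\operatorname{col}(\mathbf{A})$.

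There is no real obstacle here: the only step requiring care is the linear-independence argument, where one must combine the relation coming from the kernel expansion with the independence of the full basis $\{v_1,\dots,v_n\}$ rather than only of the extension part. The underlying prerequisites—existence of a basis for any subspace of a finite-dimensional space and the basis-extension lemma—are standard and will be invoked without further proof.
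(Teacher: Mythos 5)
Your proof is correct: the basis-extension argument (choose a basis of $\ker T$, extend it to a basis of $V$, and show that the images of the added vectors form a basis of $\operatorname{Im} T$) is the standard and complete proof of this theorem, and your handling of the linear-independence step is accurate. Note that the paper itself states this result in Appendix~\ref{ap:rank_nullity} as a classical background fact without providing any proof, so there is no argument in the paper to compare against; your proposal fills that gap with the canonical textbook argument and nothing further is needed.
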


\begin{cor} Under the same conditions as the theorem:
    \begin{enumerate}
        \item $T$ is injective if and only if $\dim (\ker T) = 0$,
        \item $T$ is surjective if and only if $\dim (ImT) = \dim W$.
    \end{enumerate}
\end{cor}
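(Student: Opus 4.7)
The plan is to prove each equivalence by unpacking definitions and invoking standard linear-algebra facts, treating the two items independently since each follows from an elementary observation rather than from the Rank-nullity identity itself (although that identity could be used for shortcut arguments in the finite-dimensional case).

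For part (1), I will show the two implications directly. In the forward direction, assume $T$ is injective. Since $T$ is linear, $T(0)=0$, so any $v\in\ker T$ satisfies $T(v)=T(0)$, and injectivity forces $v=0$; hence $\ker T=\{0\}$ and $\dim(\ker T)=0$. In the reverse direction, assume $\dim(\ker T)=0$, which means $\ker T=\{0\}$ (the zero subspace is the unique subspace of dimension zero). If $T(u)=T(v)$, then linearity gives $T(u-v)=0$, so $u-v\in\ker T=\{0\}$, i.e.\ $u=v$, proving injectivity. This step is routine and requires only the definitions of kernel, injectivity, and linearity.

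For part (2), in the forward direction surjectivity is defined as $\operatorname{Im} T=W$, which immediately yields $\dim(\operatorname{Im} T)=\dim W$. For the reverse direction, I will use the standard fact that a subspace $U$ of a finite-dimensional space $W$ with $\dim U=\dim W$ must equal $W$: one picks any basis of $U$, which has $\dim W$ linearly independent vectors in $W$, hence forms a basis of $W$ by the dimension argument, so $U=\operatorname{span}(\text{basis})=W$. Applying this to $U=\operatorname{Im} T\subseteq W$ gives $\operatorname{Im} T=W$, i.e.\ surjectivity.

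The only point that requires a brief comment is the case $\dim W=\infty$, which is not excluded by the hypotheses of the theorem (only $\dim V<\infty$ is assumed). In that case $\dim(\operatorname{Im} T)\le\dim V<\infty=\dim W$, so both sides of the equivalence in (2) are vacuously false, and the statement holds trivially. I expect no real obstacle: the entire corollary is a direct consequence of definitions together with the classical ``equal-dimension subspace equals ambient space'' lemma, and is independent of the Rank-nullity identity proper.
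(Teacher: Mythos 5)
Your proof is correct and is the standard argument; the paper states this corollary in the appendix as classical background without supplying any proof, so there is nothing to diverge from. Your handling of both directions of each equivalence, and your remark that the case $\dim W=\infty$ makes both sides of item (2) false simultaneously, are all sound and require no changes.
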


\subsection{Rayleigh quotient}
\label{ap:rayleigh_quotient}

The Rayleigh quotient is used to estimate eigenvalues and conditioning. For a symmetric matrix $\mathbf{A} \in \mathbb{R}^{n \times n}$ and any nonzero $\mathbf{x} \in \mathbb{R}^n$, the Rayleigh quotient is given by:
\begin{equation}\label{eq:rayleigh_quotient}
    R_{\mathbf{A}} (\mathbf{x}) = \frac{\mathbf{x}^T\mathbf{A}\mathbf{x}}{\mathbf{x}^T \mathbf{x}}.
\end{equation}
The key properties from (\ref{eq:rayleigh_quotient}) are:
\begin{enumerate}
    \item $\lambda_{\min} \leq R_{\mathbf{A}}(\mathbf{x}) \leq \lambda_{\max}$;
    \item if $\mathbf{A} \mathbf{x} = \lambda \mathbf{x} \Rightarrow R_{\mathbf{A}}(\mathbf{x}) = \lambda$;
    \item Courant-Fischer characterization:
    \begin{equation}
        \nonumber
        \lambda_{\min} = \min \limits_{\mathbf{x} \neq \mathbf{0}} R_{\mathbf{A}}(\mathbf{x}), \; \lambda_{\max}  = \max \limits_{\mathbf{x} \neq \mathbf{0}} R_{\mathbf{A}}(\mathbf{x})    
    \end{equation}
\end{enumerate}

For a symmetric positive definite $\mathbf{M}$ and a symmetric $\mathbf{K}$, the generalized version of the quotient is:
\begin{equation}
    \nonumber
    R_{\mathbf{K},\mathbf{M}} (\mathbf{x}) = \frac{\mathbf{x}^T \mathbf{K}\mathbf{x}}{\mathbf{x}^T \mathbf{M}\mathbf{x}}.
\end{equation}
Then, $R_{\mathbf{K},\mathbf{M}}$ is bounded by the generalized eigenvalues of $\mathbf{K}\mathbf{\phi} = \lambda \mathbf{M}\mathbf{\phi}$, and
\begin{equation}
    \nonumber
    \lambda_{\min} = \min \limits_{\mathbf{x} \neq \mathbf{0}} R_{\mathbf{K},\mathbf{M}} (\mathbf{x}), \; \lambda_{\max} = \max \limits_{\mathbf{x} \neq \mathbf{0}} R_{\mathbf{K},\mathbf{M}} (\mathbf{x}).
\end{equation}

\subsection{Maximum principle}
\label{ap:maximum_principle}
Let $\Omega$ be an open real set.

\begin{theo}[Maximum principle] \label{theo:maximum_principle}
    Suppose $u \in C^2(\Omega) \cap C(\overline{\Omega})$ is harmonic within $\Omega$. Then,
    \begin{equation}
        \nonumber
        \sup \limits_{\Omega} = \sup \limits_{\partial \Omega} u, \; \inf \limits_{\Omega} u = \inf \limits_{\partial \Omega} u.
    \end{equation}
    In particular, if 
    \begin{equation}
        \nonumber
        m \leq u|_{\partial \Omega} \leq M \; \text{on} \; \partial \Omega,
    \end{equation}
    then
    \begin{equation}
        \nonumber
        m \leq u \leq M \; \text{in} \; \Omega.
    \end{equation}
\end{theo}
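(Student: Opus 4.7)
The plan is to prove the weak maximum principle via the classical perturbation/strict-subharmonicity argument, assuming implicitly that $\Omega$ is bounded (which is the setting in which this theorem is applied in Lemma~\ref{lemma:spd}; for unbounded $\Omega$ the equalities as stated can fail). First I would reduce the claim to a one-sided statement: since $-u$ is also harmonic and $C^2(\Omega)\cap C(\overline\Omega)$, the infimum identity $\inf_\Omega u=\inf_{\partial\Omega}u$ follows directly from the supremum identity applied to $-u$. Moreover, because $\partial\Omega\subset\overline\Omega$ and $u$ is continuous on $\overline\Omega$, the trivial inequality $\sup_{\partial\Omega}u\le\sup_{\overline\Omega}u=\sup_\Omega u$ holds, so the substantive task is the reverse inequality $\sup_\Omega u\le\sup_{\partial\Omega}u$.

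Next I would introduce the perturbation $u_\varepsilon(x)=u(x)+\varepsilon|x|^2$ for $\varepsilon>0$. A direct computation gives $\Delta u_\varepsilon=\Delta u+2d\varepsilon=2d\varepsilon>0$, so $u_\varepsilon$ is strictly subharmonic. The key observation is that a $C^2$ function with strictly positive Laplacian cannot attain a local maximum at an interior point: at such a point the Hessian $D^2 u_\varepsilon$ is negative semidefinite, hence $\Delta u_\varepsilon=\mathrm{tr}(D^2 u_\varepsilon)\le 0$, contradicting $\Delta u_\varepsilon>0$. Since $u_\varepsilon\in C(\overline\Omega)$ and $\overline\Omega$ is compact, the maximum of $u_\varepsilon$ over $\overline\Omega$ is attained, and by the preceding remark it must be attained on $\partial\Omega$. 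Therefore
\begin{equation}
\nonumber
\sup_{\Omega}u_\varepsilon \;\le\; \sup_{\partial\Omega}u_\varepsilon.
\end{equation}

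Finally I would pass to the limit $\varepsilon\to 0^+$. Using boundedness of $\Omega$, set $R=\sup_{x\in\overline\Omega}|x|<\infty$; then $u(x)\le u_\varepsilon(x)\le\sup_{\partial\Omega}u+\varepsilon R^2$ for every $x\in\Omega$, whence $\sup_\Omega u\le\sup_{\partial\Omega}u+\varepsilon R^2$. Letting $\varepsilon\downarrow 0$ yields $\sup_\Omega u\le\sup_{\partial\Omega}u$, which combined with the trivial inequality gives the desired equality. The particular statement $m\le u\le M$ in $\Omega$ then follows immediately by applying the two equalities to the boundary bounds. The main obstacle is really only conceptual rather than technical: one must recognize that the naive argument (Hessian is $\le 0$ at an interior max, so $\Delta u\le 0$, which is consistent with $\Delta u=0$ and gives no contradiction) fails for harmonic functions, and that introducing the $\varepsilon|x|^2$ perturbation to produce strict subharmonicity is the standard device that upgrades $\le 0$ to a contradiction with $>0$.
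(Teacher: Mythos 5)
Your proof is correct. Note, however, that the paper does not actually prove this statement: Theorem \ref{theo:maximum_principle} appears in the appendix as a quoted classical result (used only to establish positivity of the basis functions in Lemma \ref{lemma:spd}), so there is no in-paper argument to compare against. Your perturbation argument with $u_\varepsilon = u + \varepsilon|x|^2$ is the standard textbook proof of the weak maximum principle and every step checks out: the computation $\Delta u_\varepsilon = 2d\varepsilon > 0$, the observation that a strictly subharmonic $C^2$ function cannot attain an interior maximum (negative semidefinite Hessian forces $\Delta u_\varepsilon \le 0$), the attainment of the maximum on the compact set $\overline{\Omega}$, and the limit $\varepsilon \downarrow 0$ using $R = \sup_{\overline{\Omega}}|x| < \infty$. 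You are also right to flag boundedness of $\Omega$ as a necessary hypothesis that the paper leaves implicit (it writes only ``let $\Omega$ be an open real set''); without it both the compactness step and the $\varepsilon R^2$ bound fail, and the stated equalities are false in general for unbounded domains. The reduction of the infimum identity to the supremum identity via $-u$, and the derivation of the ``in particular'' clause, are both immediate and correctly handled.
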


\begin{theo}[Strong maximum principle]\label{theo:strong_maximum_principle}
    If $u$ attains its maximum (or minimum) at an interior point of $\overline{\Omega}$, then $u$ is constant in $\Omega$. Equivalently, if $u$ is nonconstant and $u \geq 0$ on $\partial \Omega$, then $u > 0$ in $\Omega$.
\end{theo}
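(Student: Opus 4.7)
The plan is to deduce the strong maximum principle from the mean value property for harmonic functions, via a standard clopen argument on the level set of the extremum. Throughout, assume $\Omega$ is connected (otherwise the statement is applied componentwise). The weak maximum principle (Theorem \ref{theo:maximum_principle}) is already available, so the remaining task is to upgrade from ``$u\le M$ everywhere'' to the dichotomy ``$u\equiv M$ in $\Omega$ or $u<M$ throughout $\Omega$''.

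First I would recall that any $u\in C^2(\Omega)$ harmonic on $\Omega$ satisfies, for every ball $B_r(x_0)\subset\subset\Omega$,
\[
u(x_0) \;=\; \frac{1}{|B_r(x_0)|}\int_{B_r(x_0)} u(y)\,dy,
\]
a consequence of Green's identity applied to $u$ and the fundamental solution of the Laplacian. Let $M=\sup_\Omega u$ and set $A=\{x\in\Omega : u(x)=M\}$. I would show that $A$ is both closed and open in $\Omega$: closedness is immediate from continuity of $u$, while openness uses the mean value identity, since for $x_0\in A$ and any $B_r(x_0)\subset\Omega$,
\[
M \;=\; u(x_0) \;=\; \frac{1}{|B_r(x_0)|}\int_{B_r(x_0)} u(y)\,dy \;\le\; M.
\]
Equality of the two ends forces $\int_{B_r(x_0)}(M-u)\,dy=0$ with the nonnegative continuous integrand $M-u$, hence $u\equiv M$ on $B_r(x_0)$ and therefore $B_r(x_0)\subset A$. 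Since $\Omega$ is connected and $A$ is nonempty by hypothesis, $A=\Omega$, that is, $u\equiv M$. The minimum case follows by applying the same argument to $-u$, which is also harmonic.

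For the equivalent formulation, I would combine the result just proved with the weak principle: by Theorem \ref{theo:maximum_principle}, $\inf_\Omega u=\inf_{\partial\Omega} u\ge 0$, so $u\ge 0$ on $\Omega$. If $u(x_0)=0$ at some interior $x_0$, then $x_0$ is an interior minimum, whence by the part already established $u\equiv 0$ on $\Omega$, contradicting nonconstancy. Therefore $u>0$ in $\Omega$.

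The main obstacle is the equality case of the mean value inequality, where one must pass from the vanishing integral $\int_{B_r(x_0)}(M-u)\,dy=0$ of a nonnegative continuous integrand to the pointwise identity $u\equiv M$ on $B_r(x_0)$. This step is elementary but is the hinge that converts a pointwise extremum into a local constancy statement, and it is exactly what promotes the weak principle to the strong one; everything else is bookkeeping with continuity and connectedness.
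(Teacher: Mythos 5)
Your proof is correct: the mean-value property plus the clopen argument on the level set $A=\{x\in\Omega: u(x)=M\}$ is the standard route to the strong maximum principle, and your reduction of the ``equivalent formulation'' to the weak principle (Theorem \ref{theo:maximum_principle}) plus the constancy dichotomy is sound. Note, however, that the paper offers no proof of this statement at all --- it is listed in the appendix as classical background (alongside the weak maximum principle and the Rayleigh quotient facts) and is simply invoked in the proof of Lemma \ref{lemma:spd} to get strict positivity of the $k=1$ basis functions --- so there is no authorial argument to compare against. Two small points worth tightening: the connectedness of $\Omega$ that your argument requires is not stated in the theorem's hypotheses (you correctly flag this and work componentwise otherwise), and the hinge step you identify --- that a nonnegative continuous function with vanishing integral over a ball is identically zero there --- is indeed the only place where anything beyond bookkeeping happens.
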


\section{Proof of the theorems}
\label{ap:proofs}

\subsection{VEM classical results}
In this section, some classical results in the Virtual Element Method theory that are necessary to prove theorems presented in this work. The first theorem can be found in \cite{chen2018error}. It establishes an equivalence between the $L^2$ norm on a star-shaped element and a properly scaled degree-of-freedom (DOF) norm for the local VEM space. The constants involved depend only on the dimension, polynomial degree, and mesh chunkiness (and on mild edge-control assumptions), and are independent of the element size and the actual number of edges. This equivalence is a key ingredient for stability estimates and for deriving spectral bounds used in the CFL analysis.
\begin{theo}[DOF-$L^2$ equivalence]\label{theo:l2_equivalence}
    Let $E\in\mathcal{T}_h$ be star-shaped with chunkiness $\gamma>0$ and diameter $h_E$. Let $V_{h,k}(E)$ be the local scalar VEM space of order $k\ge 1$ defined in Section~\ref{sec:virtual_element_space}, with local degrees of freedom $\{\chi_i^E\}_{i=1}^{N^E_{\mathrm{dof}}}$ (vertex values for $k\ge 1$, edge moments up to degree $k-2$ for $k\ge 2$, and interior moments up to degree $k-2$ for $k\ge 2$). Assume a standard edge-control hypothesis (e.g., a uniform bound on the number of edges $N^E_{\mathrm{dof}}\le N_{\max}$, or a minimum edge length $|e|\ge c_e\,h_E$ for every $e\subset\partial E$). Then there exist constants $C_{*}(d,k,\gamma)>0$ and $C^{*}(d,k,\gamma)>0$, depending only on $d$, $k$, $\gamma$ (and on the chosen edge-control hypothesis) but independent of $h_E$ and of the actual number of edges of $E$, such that for all $v_h\in V_{h,k}(E)$
    \begin{equation}
        \nonumber
        C_{*}(d,k,\gamma)\, h_E^{-d}\, \|v_h\|^2_{L^2(E)}\;\le\; \sum_{i=1}^{N^E_{\mathrm{dof}}} \big(\chi_i^E(v_h)\big)^2\;\le\; C^{*}(d,k,\gamma)\, h_E^{-d}\, \|v_h\|^2_{L^2(E)}.
    \end{equation}
    Equivalently, defining the local discrete DOF-norm
    \begin{equation}
        \nonumber
        \|v_h\|^2_{\mathrm{dof},E} := h_E^{-d}\sum_{i=1}^{N^E_{\mathrm{dof}}} \big(\chi_i^E(v_h)\big)^2,
    \end{equation}
    one has
    \begin{equation}
        \nonumber
        (C_{*}(d,k,\gamma))^{1/2}\,\|v_h\|_{L^2(E)}\ \le\ \|v_h\|_{\mathrm{dof},E}\ \le\ (C^{*}(d,k,\gamma))^{1/2}\,\|v_h\|_{L^2(E)}.
    \end{equation}
\end{theo}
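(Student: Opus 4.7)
The plan is to reduce the inequality to a reference element via affine scaling, exploit the finite-dimensional structure of the local VEM space to obtain an equivalence on the reference element, and then upgrade to a uniform estimate via a compactness argument over the admissible shape class. First I would introduce the affine scaling $\hat{x}=(x-x_E)/h_E$, where $x_E$ is the center of the star-shaped ball $\mathcal{B}_E$, mapping $E$ to a reference polygon $\hat{E}$ of unit diameter that is star-shaped with respect to a ball of radius at least $\gamma$. A direct change of variables yields $\|v\|_{L^2(E)}^2=h_E^{d}\,\|\hat{v}\|_{L^2(\hat{E})}^2$. A short inspection of the three DOF families—vertex evaluations, edge moments averaged by $1/|e|$, and interior moments averaged by $1/|E|$—shows that each functional is scale-invariant, so $\chi_i^E(v)=\chi_i^{\hat{E}}(\hat{v})$. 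The $h_E^{-d}$ factor in the statement is therefore entirely produced by the change of variables, and it suffices to prove the equivalence on $\hat{E}$ with dimensionless constants.

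On the reference element, the local VEM space $V_{h,k}(\hat{E})$ is finite-dimensional of dimension $\hat{N}_{\mathrm{dof}}$, and by the unisolvence property stated in Definition~\ref{def:degrees of freedom} the DOF map $\hat{v}\mapsto\bigl(\chi_i^{\hat{E}}(\hat{v})\bigr)_{i=1}^{\hat{N}_{\mathrm{dof}}}$ is a linear bijection onto $\mathbb{R}^{\hat{N}_{\mathrm{dof}}}$. Both $\hat{v}\mapsto\|\hat{v}\|_{L^2(\hat{E})}$ and $\hat{v}\mapsto\bigl(\sum_i\chi_i^{\hat{E}}(\hat{v})^2\bigr)^{1/2}$ are norms on this finite-dimensional space and are therefore equivalent for any fixed shape, with the lower bound coming from the fact that a function whose DOFs all vanish must itself vanish by unisolvence. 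This gives shape-dependent constants $C_*^{\hat{E}}$ and $C^{*,\hat{E}}$ immediately.

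The main obstacle is upgrading this shape-by-shape equivalence to one with constants that are uniform over the admissible class and, in particular, independent of the number of edges. I would parameterize the admissible shapes by the vertex count $n\in\{3,\dots,N_{\max}\}$ and, for each $n$, consider the set $\mathcal{A}_n$ of star-shaped $n$-gons of unit diameter whose inscribed ball has radius at least $\gamma$. Each $\mathcal{A}_n$ is a compact subset of $(\mathbb{R}^2)^n$, since the diameter bound, star-shapedness with a fixed inner ball, and the chunkiness condition are closed constraints on vertex coordinates. The decisive ingredient is that the local VEM space and its DOF functionals depend continuously on the vertex positions; granting this, the extrema of the Rayleigh-type ratio $\sum_i\chi_i^{\hat{E}}(\hat{v})^2/\|\hat{v}\|_{L^2(\hat{E})}^2$ over the unit sphere of $V_{h,k}(\hat{E})$ are continuous functions of the shape parameters, are attained on $\mathcal{A}_n$ by compactness, and are strictly positive and finite by the finite-dimensional argument above. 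Taking the minimum of the lower extremum and the maximum of the upper extremum over the finitely many $n$ yields $C_*(d,k,\gamma)$ and $C^*(d,k,\gamma)$ with the asserted dependencies. The alternative hypothesis $|e|\ge c_e\,h_E$ plays exactly the same compactifying role, ruling out sequences of admissible shapes that degenerate through arbitrarily short edges. The genuine technical difficulty, and the one step that cannot be bypassed by elementary manipulations, is establishing the continuous dependence of the canonical VEM basis on the polygonal geometry—which in turn requires continuity of the energy projector $\scalenergyproj$ and of the enhancement constraint under perturbations of the vertex coordinates.
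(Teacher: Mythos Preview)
The paper does not supply its own proof of this theorem: it is stated in Appendix~B as a classical VEM result and attributed to \cite{chen2018error}, with no argument given. There is therefore nothing to compare your proposal against in the paper itself.

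On its own merits, your scaling-plus-compactness strategy is the standard route to such DOF--$L^2$ equivalences and is essentially the one used in the cited reference. The reduction to a unit-diameter reference polygon and the observation that the normalized DOFs are scale-invariant are correct and produce the $h_E^{-d}$ factor cleanly. The finite-dimensional norm equivalence on a fixed shape is immediate from unisolvence, as you note.

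You have correctly located the only nontrivial step---uniform control over the shape class---and named its real obstruction, namely continuous dependence of the local VEM space (equivalently, of $\scalenergyproj$ and the enhancement constraint) on the vertex coordinates. One point worth sharpening: under the first edge-control hypothesis alone (merely $N^E_{\mathrm{dof}}\le N_{\max}$), the sets $\mathcal{A}_n$ are not compact in the usual vertex-coordinate topology, because sequences with collapsing edges escape to lower-$n$ strata. The argument still closes, but only because the limiting configuration lives in some $\mathcal{A}_m$ with $m<n$ and the DOF/$L^2$ ratios converge across strata; making this precise requires a bit more care than a single-$n$ compactness step. Under the minimum-edge-length hypothesis $|e|\ge c_e h_E$ your compactness argument goes through exactly as written.
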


Inverse estimates are a classical tool in Galerkin discretizations. Under the VEM shape-regularity assumptions (star-shaped elements with chunkiness $\gamma$ and mild edge-control), the $H^1$-seminorm of a discrete function can be controlled by $h_E^{-1}$ times its $L^2$-norm, with constants depending only on $d$, $k$, and $\gamma$, and independent of the element size and the number of faces/edges. The next theorem states the local and broken/global forms of this inequality for $V_{h,k}$, and is used to bound generalized Rayleigh quotients and to derive mesh-robust CFL conditions. This result is originally presented in \cite{beirao2013vem}.
\begin{theo}[Inverse estimate]\label{theo:inverse_ineq}
    Let $\mathcal{T}_h$ be a VEM shape-regular mesh: every $E\in\mathcal{T}_h$ is star-shaped with chunkiness $\gamma>0$, and either $N_E\le N_{\max}$ uniformly or $|e|\ge c_e\,h_E$ on each edge $e\subset\partial E$. Let $V_{h,k}\subset H^1_0(\Omega)$ be the conforming VEM space of order $k\ge1$. Then there exists a constant $C_{\mathrm{inv}}=C_{\mathrm{inv}}(d,k,\gamma)>0$ (independent of $h$ and of the actual number of faces/edges of each element) such that:
    
    for all $v_h\in V_{h,k}(E)$,
    \[
      |v_h|_{1,E}\ \le\ C_{\mathrm{inv}}\, h_E^{-1}\, \|v_h\|_{0,E},
    \]
    
    and for all $v_h\in V_{h,k}$,
    \[
      |v_h|_{1,\Omega}^2 \;=\; \sum_{E\in\mathcal{T}_h} |v_h|_{1,E}^2
      \;\le\; C_{\mathrm{inv}}^{\,2} \sum_{E\in\mathcal{T}_h} h_E^{-2}\,\|v_h\|_{0,E}^2
      \;\le\; C_{\mathrm{inv}}^{\,2}\, h_{\min}^{-2}\, \|v_h\|_{0,\Omega}^2.
    \]
\end{theo}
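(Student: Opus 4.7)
The plan is to reduce the global statement to the local one on each element and then combine with Theorem~\ref{theo:l2_equivalence}. The second and third inequalities in the global bound are essentially bookkeeping: summing the local estimate $|v_h|_{1,E}^2 \le C_{\mathrm{inv}}^2\, h_E^{-2}\,\|v_h\|_{0,E}^2$ over $\mathcal{T}_h$ yields the middle inequality, and the crude replacement $h_E\ge h_{\min}$ produces the rightmost one. The substance of the proof is therefore the local estimate $|v_h|_{1,E} \le C_{\mathrm{inv}}\,h_E^{-1}\|v_h\|_{0,E}$ for $v_h\in V_{h,k}(E)$.

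First, I would split $v_h = p_h + w_h$ with $p_h = \Pi^{\nabla}_{k,E} v_h \in \mathbb{P}_k(E)$ and $w_h = (I-\Pi^{\nabla}_{k,E})v_h \in \ker \Pi^{\nabla}_{k,E}$. Gradient-orthogonality $a^E(p_h,w_h)=0$ gives the Pythagorean decomposition $|v_h|_{1,E}^2 = |p_h|_{1,E}^2 + |w_h|_{1,E}^2$, so each piece can be bounded separately. For the polynomial part, a classical Markov-type inverse inequality on a star-shaped domain with chunkiness $\gamma$ yields $|p_h|_{1,E} \le C(d,k,\gamma)\,h_E^{-1}\|p_h\|_{0,E}$, and $\|p_h\|_{0,E} \le C\|v_h\|_{0,E}$ follows from DOF--$L^2$ equivalence applied on the polynomial image (the DOFs of $p_h$ are linear combinations of the DOFs of $v_h$ with norms controlled by $d,k,\gamma$). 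For the kernel part, I would invoke the stabilization bound~\eqref{eq:stability_term_condition}, namely $|w_h|_{1,E}^2 = a^E(w_h,w_h) \le C_0^{-1}\,S^E(w_h,w_h)$, and take $S^E$ of dofi--dofi type, $S^E(u,v) = h_E^{d-2}\sum_i \chi_i^E(u)\chi_i^E(v)$. Theorem~\ref{theo:l2_equivalence} then gives $\sum_i \chi_i^E(w_h)^2 \le C^*(d,k,\gamma)\,h_E^{-d}\|w_h\|_{0,E}^2$, hence $|w_h|_{1,E}^2 \le C\,h_E^{-2}\|v_h\|_{0,E}^2$ after bounding $\|w_h\|_{0,E}\le C\|v_h\|_{0,E}$ by triangle inequality and the previous $L^2$-stability of $\Pi^{\nabla}_{k,E}$. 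Combining the two pieces produces the local inverse inequality.

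The main obstacle is ensuring every constant depends on $E$ only through $(d,k,\gamma)$ and the edge-control parameters, with no hidden dependence on the actual vertex count. The DOF--$L^2$ equivalence, the dofi--dofi stability, and the polynomial $L^2$-stability of $\Pi^{\nabla}_{k,E}$ are all finite-dimensional norm equivalences whose a priori constants scale with $\dim V_{h,k}(E)$, and therefore with the number of edges. Uniformization is achieved through the edge-control hypothesis (either $N_E\le N_{\max}$ or $|e|\ge c_e h_E$), which confines admissible element shapes to a precompact family of normalized polygons; a scaling/compactness argument on this family extracts a single mesh-independent constant $C_{\mathrm{inv}}(d,k,\gamma)$. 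Once this uniformization is secured, the local estimate and the two global inequalities follow by the routine assembly described at the outset.
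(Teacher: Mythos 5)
First, a point of reference: the paper does not prove this theorem at all — it is stated in the appendix as a classical result imported from \cite{beirao2013vem} (see also \cite{chen2018error}) — so there is no in-paper argument to compare against, and your attempt must stand on its own. Judged that way, it has a genuine gap, and it sits in the kernel part. You bound $|w_h|_{1,E}^2=a^E(w_h,w_h)\le C_0^{-1}S^E(w_h,w_h)$ by invoking \eqref{eq:stability_term_condition} for a dofi--dofi stabilizer $S^E(u,v)=h_E^{d-2}\sum_i\chi_i^E(u)\chi_i^E(v)$, and then convert $S^E(w_h,w_h)$ into $h_E^{-2}\|w_h\|_{0,E}^2$ via Theorem \ref{theo:l2_equivalence}. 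But modulo that same DOF--$L^2$ equivalence, the lower stabilization bound $C_0\,a^E(w,w)\le S^E(w,w)$ for the dofi--dofi choice \emph{is} the inverse inequality restricted to $\ker\Pi^\nabla_{k,E}$: it asserts $|w|_{1,E}^2\lesssim h_E^{d-2}\sum_i\chi_i^E(w)^2\lesssim h_E^{-2}\|w\|_{0,E}^2$. Establishing that the dofi--dofi stabilizer satisfies \eqref{eq:stability_term_condition} with constants depending only on $(d,k,\gamma)$ is itself a nontrivial theorem whose known proofs go through the inverse estimate, so your argument assumes the conclusion in disguised form rather than proving it.

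The secondary steps are also not free. The bound $\|\Pi^\nabla_{k,E}v_h\|_{0,E}\le C\|v_h\|_{0,E}$ is a uniform $L^2$-stability statement for the energy projector on the virtual space; the paper only records $\|\Pi^\nabla_{k,E}v\|_{L^2}\le C\|\nabla v\|_{L^2}$, which would be circular here, and your claim that the DOFs of $p_h$ are linear combinations of the DOFs of $v_h$ ``with norms controlled by $d,k,\gamma$'' is precisely the uniformity that needs proof. Finally, the closing scaling/compactness argument is delicate for virtual functions: unlike polynomials, $V_{h,k}(E)$ is not the pullback of a fixed finite-dimensional space on a reference element — the space itself changes with the shape of $E$ — which is exactly why the literature proves this estimate by exploiting the PDE characterization of virtual functions ($\Delta v_h$ polynomial in the extended space, piecewise-polynomial traces) together with polynomial inverse and trace inequalities, rather than by norm equivalence plus compactness. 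The reduction of the two global inequalities to the local one is indeed routine, as you say; the local estimate is where all the work lives, and it is not yet done.
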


\subsection{Proof of theorem \ref{theo:homogeneous_case}}
\label{sec:proof_homogeneous}

Substitute the update into the energy step $n+1$:
\begin{equation}
    \nonumber
    \| \u^{n+1}\|_{\M}^2= (\u^{n+1})^T \M \u^{n+1} = \left( \u^n-\Delta t \M^{-1}\K \u^n \right)^T\M \left( \u^n-\Delta t \M^{-1}\K \u^n \right).
\end{equation}
Since $\M$ is symmetric positive definite, and $\K$ is symmetric positive semi-definite, it is possible to expand the quadratic form such that:
\begin{equation}
    \nonumber
    \| \u^{n+1} \|_{\M} = (\u^n)^T \M \u^n - \Delta t (\u^n)^T \K \u + \Delta t^2 \K \M^{-1} \K \u^n,
\end{equation}
where the cross terms are equal due to the symmetry:
\begin{equation}
    \nonumber
    (\u^n)^T \left( \M \Delta t \M^{-1} \K \u^n \right) = \Delta t (\u^n)^T \K \u^n,
\end{equation}
and similarly for the transpose. Thus, the energy difference is:
\begin{equation}
    \nonumber
    \|\u^{n+1}\|^2_{\M} - \| \u^n \|^2_{\M} = -2 \Delta t (\u^n)^T\K \u^n + \Delta t^2 (\u^n)^T \K \M^{-1}\K \u^n.
\end{equation}
For the difference to be non-positive:
\begin{equation}
    \nonumber
    \Delta t^2 (\u^n)^T \K \M^{-1}\K \u^n \leq 2 \Delta t (\u^n)^T \K \u^n.
\end{equation}
If $(\u^n)^T \K \u^n = 0$ the scheme is stable. For $(\u^n)^T \K \u^n > 0$:
\begin{equation}
    \nonumber
    \Delta t \frac{\u^n \K \M^{-1}\K \u^n}{(\u^n)^T \K \u} \leq 2.
\end{equation}
Note that $(\u^n)^T \K \u^n > 0$ is true for nonconstant modes, as $\K \geq 0$, and it is positive definite on the space orthogonal to constants by the Poincaré's inequality. The fraction is the Rayleigh quotient for symmetric matrix $\K \M^{-1} \K$ in the quadratic form induced by $\K$. Since $\K$ is symmetric positive semi-definite, let 
\begin{equation}
    \nonumber
    \K = \mathbf{Q} \mathbf{\Lambda} \mathbf{Q}^T
\end{equation}
with 
\begin{equation}
    \nonumber
    \mathbf{\Lambda} = \text{diag} (\mu_1, ..., \mu_{N_{\text{dof}}}), \; \mu_j \geq 0, \; \forall j = 1,..., N_{\text{dof}}.
\end{equation}
Then, for $\mathbf{w} = \mathbf{Q}^T \u^n$, the quotient is:
\begin{equation}
    \nonumber
    \frac{\mathbf{w}^T \mathbf{\Lambda}^{1/2} \mathbf{\Lambda}^{1/2} \M^{-1} \mathbf{\Lambda}^{1/2} \mathbf{\Lambda}^{1/2} \mathbf{w}}{\mathbf{w}^T \mathbf{\Lambda} \mathbf{w}} \leq \lambda_{\max} \left( \M^{-1} \K \right)
\end{equation}
as the maximum Rayleigh quotient is $\lambda_{\max}$. Thus, the condition holds if $\Delta t  \leq 2/\lambda_{\max}$. To avoid any notation misunderstanding, $\mu_j$ are the eigenvalues of $\K$, and $\lambda_j$ are the eigenvalues associated to $\M^{-1} \K$. This is uniform over all $\u^n$, ensuring global stability.

\subsection{Proof of theorem \ref{theo:non_homogeneous_case}}
\label{sec:proof_nonhomogeneous}

To shorten notation, define
\begin{equation}
    \nonumber
    E^n = \| \u^n\|^2_{\M} = (\u^n)^T \M \u^n.
\end{equation}
The proof strategy is to show that the energy growth per step is controlled by the forcing term, leading to a bounded situation over time via discrete Grönwall inequality. This ensures stability under the same CFL condition as the homogeneous case. 

The forward Euler step for the nonhomogeneous system is:
\begin{equation}
    \nonumber
    \u^{n+1} = \u^n - \Delta t \M^{-1} \K \u^n + \Delta t \M^{-1}\mathbf{f}_h^n.
\end{equation}
This is an affine update in $\Delta t$, i.e., the homogeneous part is linear in $\Delta t$, and the forcing term adds another linear term. Since the energy $\| \cdot \|_{\M}^2$ is quadratic, the difference $E^{n+1}-E^n$ will be quadratic in $\Delta t$.

Define the following vector:
\begin{equation}
    \nonumber
    \mathbf{g}^n = -\M^{-1}\K \u^n + \M^{-1}\mathbf{f}_h^n,
\end{equation}
so,
\begin{equation}
    \nonumber
    \u^{n+1} = \u^n + \Delta t \mathbf{g}^n.
\end{equation}
The energy difference is
\begin{equation}
    \nonumber
    E^{n+1}-E^n = \| \u^{n+1}\|^2_{\M} - \| \u^{n}\|^2_{\M} = (\u^n+\Delta t \mathbf{g})^T \M (\u^n + \Delta t \mathbf{g}) - (\u^n)^T\M \u^n.
\end{equation}
Since $\M$, by Lemma \ref{lemma:spd}, is symmetric positive definite:
\begin{equation}\label{eq:aux_spd}
    \begin{split}
        E^{n+1}-E^n &= (\u^n)^T\M \u^n + 2 \Delta t (\u^n)^T \M \mathbf{g}^n + \Delta t^2 (\mathbf{g}^n)^T \M \mathbf{g}^n - (\u^n)^T\M \u^n \\
        &= 2 \Delta t (\u^n)^T\M \mathbf{g} + \Delta t^2 (\mathbf{g}^n)^T \M \mathbf{g}^n.
    \end{split}
\end{equation}
It holds that:
\begin{equation}\label{eq:aux_lump_g}
    \M \mathbf{g}^n = - \K \u^n + \mathbf{f}^n_h.
\end{equation}
Substituting (\ref{eq:aux_lump_g}) in (\ref{eq:aux_spd}):
\begin{equation}
    \nonumber
    2 \Delta t (\u^n)^T \M \mathbf{g}^n = 2 \Delta t (\u^n)^T (-\K + \mathbf{f}^n_h) = -2 \Delta t (\u^n)^T \K \u^n + 2 \Delta t (\u^n)^T \mathbf{f}_h^n,
\end{equation}
and
\begin{equation}
    \nonumber
    \Delta t^2 (\mathbf{g}^n)^T \M \mathbf{g}^n = \Delta t^2 (\mathbf{f}_h^n - \K \u^n)^T \M^{-1} (\mathbf{f}_h^n - \K \u^n) = \Delta t^2 \| \mathbf{f}^n_h - \K \u^n \|^2_{\M^{-1}},
\end{equation}
where
\begin{equation}
    \nonumber
    \| \mathbf{v} \|^2_{\M^{-1}} = \mathbf{v}^T \M^{-1} \mathbf{v}.
\end{equation}
Thus,
\begin{equation}\label{eq:aux_energy_difference}
    \nonumber
    E^{n+1} - E^n = -2 \Delta t (\u^n)^T \K \u^n + 2 \Delta t (\u^n)^T \mathbf{f}_h^n + \Delta t^2 \| \mathbf{f}^n_h - \K \u^n \|^2_{\M^{-1}}.
\end{equation}

The terms involving $\K$ are $-2 \Delta t (\u^n)^T \K \u^n$ and $\Delta t^2 (\u^n)^T \K \M^{-1} \K \u^n$. These are non-positive under the CFL condition:
\begin{equation}
    \nonumber
    \Delta t \leq \frac{2}{\lambda_{\max}\left( \M^{-1} \K \right)}.
\end{equation}
This can be seen once $\K$ and $\M$ are symmetric with $\M$ symmetric positive definite (see Lemma \ref{lemma:spd}). Then, there is a $\M$-orthonormal basis $\{ \mathbf{v}_k \}_{k=1}^{N_{\text{dof}}}$, and real eigenvalues $\lambda_k \geq 0$ such that:
\begin{equation}
    \nonumber
    \K \mathbf{v}_k = \lambda_k \M \mathbf{v}_k, \: \mathbf{v}_i \M \mathbf{v}_j = \delta_{ij}.
\end{equation}
Expanding
\begin{equation}
    \nonumber
    \u^n = \sum \limits_k c_k \mathbf{v}_k,
\end{equation}
with $c_k > 0$, it holds true that:
\begin{equation}
    \nonumber
    (\u^n)^T \K \u^n = \sum \limits_k \lambda_k |c_k|^2, \; (\u^n)^T \K \M^{-1} \K \u^n = \sum \limits_k \lambda_k^2 |c_k|^2.
\end{equation}
Hence, the homogeneous part of the energy increment is:
\begin{equation}
    \nonumber
    -2 \Delta t (\u^n)^T \K \u^n + \Delta t^2 (\u^n)^T \K \M^{-1} \K \u^n = \sum \limits_k (-2 \Delta t \lambda_k + \Delta t^2 \lambda_k^2)|c_k|^2 = \sum \limits_k \lambda_k  (-2 \Delta t  + \Delta t^2 \lambda_k)|c_k|^2.
\end{equation}
If the CFL holds, $\Delta t \leq 2/\lambda_{\max}$, then for every $k$:
\begin{equation}
    \nonumber
    -2 \Delta t + \Delta t^2 \lambda_k \leq -2 \Delta t + \Delta t^2 \lambda_{\max} \leq 0,
\end{equation}
so each summand is less or equal to zero. Therefore, the entire sum is less or equal to zero, and it is possible to conclude that:
\begin{equation}
    \nonumber
    -2 \Delta t (\u^n)^T \K \u^n + \Delta t^2 (\u^n)^T \K \M^{-1} \K \u^n \leq 0.
\end{equation}
The energy difference in (\ref{eq:aux_energy_difference}) simplifies to:
\begin{equation}\label{eq:aux_energy_difference_ineq}
    E^{n+1} - E^n \leq 2 \Delta t (\u^n)^T \mathbf{f}^n_h + \Delta t^2 \| \mathbf{f}^n_h \|^2_{\M^{-1}},
\end{equation}
as the mixed term $-2\Delta t^2 (\u^n)^T \K \M^{-1} \mathbf{f}^n_h$ is absorbed into the non-positive part. 

The loading force term is $2\Delta t (\u^n)^T \mathbf{f}^n_h$. Using the Cauchy-Schwarz inequality:
\begin{equation}
    \nonumber
    \left|(\u^n)^T \mathbf{f}^n_h\right| = \left| (\u^n)^T\M^{1/2} \M^{1/2} \mathbf{f}^n_h \right| \leq \| \u^n \|_{\M} \|\mathbf{f}^n_h \|_{\M}.
\end{equation}
This is sharp and mesh independent. Applying the Young's inequality:
\begin{equation}
    \nonumber
    2 \| \u^n \|_{\M} \| \mathbf{f}^n_h \|_{\M^{-1}} \leq \| \u^n \|_{\M} + \| \mathbf{f}^n_h\|_{\M^{-1}}.
\end{equation}
So,
\begin{equation}\label{eq:aux_young_ineq}
    2 \Delta t (\u^n)^T \mathbf{f}_h^n \leq \Delta t \| \u^n \|_{\M} + \Delta t \| \mathbf{f}^n_h \|_{\M^{-1}}.
\end{equation}
Combining (\ref{eq:aux_energy_difference_ineq}) and (\ref{eq:aux_young_ineq}):
\begin{equation}
    \nonumber
    \begin{split}
        \| \u^{n+1} \|_{\M} &\leq \| \u^n \|^2_{\M} + \Delta t \| \u^n \|^2_{\M} + \Delta t \| \mathbf{f}^n_h \|_{\M^{-1}} + \Delta t^2 \| \mathbf{f}^n_h \|_{\M^{-1}} =\\
        &= (1+\Delta t) | \u^n \|^2_{\M}  + (\Delta t + \Delta t^2) \| \mathbf{f}^n_h \|_{\M^{-1}}.
    \end{split}
\end{equation}
This is the per step inequality with growth $\mathcal{O}(\Delta t)$.

\subsection{Proof of corollary \ref{cor:gronwall_consequence}}
\label{sec:proof_gronwall}

Choose
\begin{equation}
    \nonumber
    C_F = \sup \limits_{0 \leq n \leq n_{\max}} \| \mathbf{f}^n_h\|_{\M^{-1}}, 
\end{equation}
where $n_{\max}$ is the maximum number of iterations. The recurrence is given by:
\begin{equation}
    \nonumber
    E^{n+1} \leq (1+\Delta t)E^n + (\Delta t + \Delta t^2)C_F^2.
\end{equation}
Iterating:
\begin{equation}\label{eq:aux_cor_ineq}
    E^n \leq (1 + \Delta t)^n E^0 + (\Delta t + \Delta t^2) C_F^2 \sum \limits^{n-1}_{j=0}(1 + \Delta t)^j.
\end{equation}
The sum is:
\begin{equation}\label{eq:aux_cor_sum}
    \sum \limits^{n-1}_{j=0}(1 + \Delta t)^j = \frac{(1+\Delta t)^n-1}{\Delta t}.
\end{equation}
From (\ref{eq:aux_cor_sum}) in (\ref{eq:aux_cor_ineq}):
\begin{equation}
    \nonumber
    E^n \leq (1+\Delta t)^n E^0 + (1+\Delta t)[(1+\Delta t)^n -1]C_F^2.
\end{equation}
For small $\Delta t$, 
\begin{equation}
    \nonumber
    (1+\Delta t)^n \approx \exp (t_n),
\end{equation}
yielding:
\begin{equation}
    \nonumber
    E^n \leq \exp (t_n) E^0 + (1+ \Delta t)(\exp (t_n) - 1)C_F^2.
\end{equation}

\end{document}